\journalname{JOTA}
\newtheorem{algo}{Algorithm}
\numberwithin{equation}{section}
\begin{document}

\title{Shape Derivative for Penalty-Constrained Nonsmooth--Nonconvex Optimization: Cohesive Crack Problem}

\titlerunning{Shape Derivative for Penalty-Constrained Optimization: Cohesive Crack Problem}

\author{Victor A. Kovtunenko \and Karl Kunisch}

\institute{V.A. Kovtunenko \at
Institute for Mathematics and Scientific Computing, Karl-Franzens University of Graz,
NAWI Graz, Heinrichstra\ss{}e 36, 8010 Graz, Austria
\email{victor.kovtunenko@uni-graz.at} \\
\emph{and}
Lavrentyev Institute of Hydrodynamics, Siberian Division of the Russian Academy of Sciences,
630090 Novosibirsk, Russia
\and
K. Kunisch \at
Institute for Mathematics and Scientific Computing, Karl-Franzens University of Graz,
NAWI Graz, Heinrichstra\ss{}e 36, 8010 Graz, Austria
\email{karl.kunisch@uni-graz.at} \\
\emph{and}
Radon Institute, Austrian Academy of Sciences,
RICAM Linz, Altenbergerstra\ss{}e 69, 4040 Linz, Austria
}

\date{}

\maketitle

\begin{abstract}
A class of non-smooth and non-convex optimization problems with penalty constraints
linked to  variational inequalities (VI) is studied with respect to its shape differentiability.
The specific problem stemming from quasi-brittle fracture describes
an elastic body with a Barenblatt cohesive crack under the inequality condition
of non-penetration at the crack faces.
Based on the Lagrange approach and using smooth penalization with the Lavrentiev
regularization, a formula for the shape derivative is derived.
The explicit formula contains both primal and adjoint states and is useful
for finding descent directions for a gradient algorithm
to identify an optimal crack shape from a boundary measurement.
Numerical examples of destructive testing are presented in 2D.
\end{abstract}
\keywords{Shape optimization \and Optimal control \and Variational inequality
\and Penalization \and Lagrange method \and Lavrentiev regularization \and
Free discontinuity problem \and Non-penetrating crack \and Quasi-brittle fracture
\and Destructive physical analysis}
\subclass{35R37 \and 49J40 \and 49Q10 \and 74RXX}

\section{Introduction}\label{sec0}

We develop a shape derivative of geometry-dependent least-squares
functions for a class of non-smooth and non-convex optimization problems.
The shape optimization problem is constrained by a penalty equation
linked to a variational inequality (VI).
The specific problem describes non-penetrating cracks with cohesion
in the framework of quasi-brittle fracture and destructive physical analysis (DPA).

Within the general theory for optimal control of VI \cite{Bar/84,MP/84},
the main challenge consists in the derivation of optimality conditions.
It can be studied by proper approximation of VI by regularized equations
and taking the limit as the regularization parameter tends to zero.
The corresponding methods for optimal control of obstacle problems
can be found in \cite{Ber/97,HHL/14} using augmented Lagrangians, 
in e.g. \cite{HK/09,IK/03} for a Moreau--Yosida regularization, and in \cite{NT/09}
based on a Lavrentiev regularization, for the latter see \cite{HKR/16,Lav/67}.
Furthermore we cite \cite{BLR/15,CCK/13} for control of non-smooth and non-convex
functionals, \cite{KV/07} for boundary control, and \cite{GJKS/18,ZMK/21}
for control of quasi- and hemi-VI.
Shape optimization for free-interface identification with obstacle-type VI
using adjoints was developed recently by \cite{FSW/18,LSW/20}.
The common difficulty is a lack of regularity that needs
assumptions on a solution in order to take the limit \cite{Sch/22}.

Relying on linearized relations, a crack identification problem was treated e.g. in \cite{BB/13}.
We can refer also to \cite{ATS/21,HIIS/19} for relevant shape optimization
problems in acoustics, to \cite{GGK/21} in nonlinear flows subject to the divergence-free constraint,
to \cite{KO/20} for over-determined and to \cite{HKKP/03} for Bernoulli-type free boundary problems.
In the case of non-penetrating cracks (which are inequality-constrained),
the shape differentiability of the bulk energy was proved in
\cite{FHLRS/09,KK/00} for rectilinear cracks
and used for optimal shape design in \cite{KS/18,KMZ/08,LI/19,LSZ/15}.
For curvilinear cracks, adopting the theorem of
Correa--Seeger \cite{CS/85} on directional differentiability of Lagrangians
the shape derivative was derived in \cite{Kov/06,KK/07},
and in \cite{Rud/04} using $\Gamma$-convergence.

For the non-penetrating Barenblatt crack that we investigate here,
the study of the objective function and its optimal control with respect to the crack shape
has a number of challenging tasks that we address below.
The subsequent Sections~\ref{sec2}--\ref{sec6} follow Tasks (i)--(v),
which for convenience are summarized  and explained in the following Sections~\ref{sec1}.

\section{Modeling tasks}\label{sec1}

Let $t\mapsto\Omega_t$ by a parameter (time)-dependent geometry with
a crack $\Gamma_t$ along an interface (the breaking line) $\Sigma_t$.
Denote by $\nu_t$ a normal vector to the surface $\Sigma_t$.
Motivated by applications in fracture mechanics (see e.g. \cite{BMP/09}), we consider
a total energy functional $u\mapsto\mathcal{E}: V(\Omega_t)\mapsto\mathbb{R}$,
which is given in a Hilbert space $V(\Omega_t)$ by the sum
\begin{equation}\label{1.1}
\mathcal{E}(u; \Omega_t) =\mathcal{B}(u; \Omega_t) +\mathcal{S}([\![u]\!];\Sigma_t),
\end{equation}
where the bulk term $\mathcal{B}$ is convex, typically, quadratic.
The term $\mathcal{S}$ describes a surface energy according to
the Barenblatt idea of a cohesion zone and depends on the jump $[\![u]\!]$
expressing a possible discontinuity across the interface $\Sigma_t$ field $u$.
The latter term is non-convex.
The condition of non-penetration (see \cite{KK/00,KS/97}) for the normal opening
$\nu_t\cdot[\![u]\!]\ge0$ describes the feasible set $K(\Omega_t)\subset V(\Omega_t)$
which is a convex cone.
For differentiable maps $u\mapsto\mathcal{E}$, the first order optimality condition
for the minimization of $\mathcal{E}(u; \Omega_t)$ over $u\in K(\Omega_t)$
results in a VI
\begin{equation}\label{1.2}
u_t\in K(\Omega_t),\quad
\langle \partial_u \mathcal{E}(u_t; \Omega_t), u -u_t \rangle\ge0
\quad\text{for all } u\in K(\Omega_t).
\end{equation}
It constitutes a non-convex problem for a solid with a non-penetrating crack (see \cite{Kov/05}).

For comparison, the classic Griffith model of brittle fracture
simplifies $\mathcal{S}$ to be constant, and a crack $\Gamma_t$
to be predefined at the interface $\Sigma_t$.
This simplification results in a square-root singularity of the displacement $u_t$
and infinite stress at the crack tip (front) $\partial\Gamma_t$.
This is the main disadvantage of the Griffith model,
we refer to \cite{CFMT/00} for a discussion.
A model, consistent with the physics of quasi-brittle fracture for non-constant
$\mathcal{S}$, was suggested by Barenblatt \cite{Bar/62}.
It takes into account the surface cohesion from the meso-level such that
the interface surfaces close in a smooth way, and thus allow healing of the crack.
Indeed, after solving problem \eqref{1.2} according to Barenblatt,
the set of points where an opening $[\![u_t]\!]\not=0$ occurs, determines
the a-priori unknown crack $\Gamma_t$ along the interface $\Sigma_t$.
This is the complement to the closed part of the interface where $[\![u_t]\!]=0$.

The main challenge of the direct problem \eqref{1.2} concerns
the term $\mathcal{S}$ in \eqref{1.1}.
From an optimization point of view, minimization over feasible $u\in K(\Omega_t)$
of $\mathcal{E}$ with a non-smooth surface density
$[\![u]\!]\mapsto\mathcal{S}$ (when not a $C^1$-function) leads to a hemi-VI \eqref{1.2}.
The hemi-VI approach was analyzed theoretically and numerically
in \cite{HKK/11,OG/14} and used in \cite{Kov/11,KS/06,LPSS/13}
to describe a quasi-static crack propagation.
A quadratic function $\mathcal{S}$ describing adhesive cracks was studied in \cite{FIR/20}.
In the present paper, we study $C^2$-smooth surface energies $\mathcal{S}$
that are small compared to the bulk term $\mathcal{B}$ in \eqref{1.1},
see assumption \eqref{3.14} below, which is consistent with meso-level modeling.

Our ultimate aim is to identify the free-interface $\Sigma_t$
by a shape optimization approach as described in \cite{GKK/20}.
For this task, we introduce the VI-constrained least-squares misfit from
a given measurement $z$ at an observation boundary $\Gamma^{\rm O}_t$:
\begin{equation}\label{1.3}
\mathcal{J}(u_t; \Omega_t)
=\frac{1}{2} \int_{\Gamma^{\rm O}_t} |u_t - z|^2 \,dS_x +\rho |\Sigma_t|
\quad\text{such that $u_t$ solves \eqref{1.2}},
\end{equation}
where the regularization uses parameter $\rho>0$.
This constitutes a nonsmooth--nonconvex optimization problem.

Our current work focuses on the following tasks.

\paragraph{Task (i): $C^2$-approximation of $\mathcal{E}$.}
To provide a shape derivative of $\mathcal{J}$ defined in \eqref{1.3}
a continuously differentiable approximation of VI \eqref{1.2} is needed.
The standard penalization of non-penetration  $\nu_t\cdot[\![u]\!]\ge0$
by $-[\nu_t\cdot[\![u]\!] ]^-/\varepsilon$ has only $C^0$-regularity.
Here the regularization parameter $\varepsilon>0$ is small, and
$u =[u]^+ -[u]^-$ implies the decomposition into positive
$[u]^+ =\max(0,u)$ and negative $[u]^- =-\min(0,u)$ parts.
Therefore, we suggest a $C^1$-penalization by the normal compliance
$\beta_\epsilon(\nu_t\cdot[\![u]\!])$ based on the Lavrentiev regularization
(see Theorem~\ref{theo2}).
This results in a $C^2$-approximation of $\mathcal{E}$ for
the $\varepsilon$-approximation of \eqref{1.1}--\eqref{1.3} by
\begin{equation}\label{1.4}
\mathcal{J}(u^\varepsilon_t; \Omega_t)
=\frac{1}{2} \int_{\Gamma^{\rm O}_t} |u^\varepsilon_t - z|^2 \,dS_x +\rho |\Sigma_t|,
\quad\text{where }\partial^\varepsilon_u \mathcal{E}(u^\varepsilon_t; \Omega_t)=0,
\end{equation}
and the penalty equation involves the operator $\partial^\varepsilon_u \mathcal{E}$ introduced as  follows
\begin{equation}\label{1.5}
\langle \partial^\varepsilon_u \mathcal{E}(u; \Omega_t), v \rangle
:=\langle \partial_u \mathcal{E}(u; \Omega_t), v \rangle
+\int_{\Sigma_t} \beta_\epsilon(\nu_t\cdot[\![u]\!])\, (\nu_t\cdot[\![v]\!]) \,dS_x.
\end{equation}

\paragraph{Task (ii): adjoint-based optimality conditions.}
Applying to the penalty-constrained least-square misfit \eqref{1.4} a Lagrange
multiplier approach (see \cite{IK/08}), we can define an $\varepsilon$-dependent Lagrangian
$(u,v)\mapsto \mathcal{L}^\varepsilon: V(\Omega_t)^2\mapsto\mathbb{R}$ as
\begin{equation}\label{1.6}
\mathcal{L}^\varepsilon(u, v; \Omega_t) =\mathcal{J}(u; \Omega_t)
-\langle \partial^\varepsilon_u \mathcal{E}(u; \Omega_t), v \rangle.
\end{equation}
The primal (inf-sup) problem: for fixed $v^\varepsilon_t\in V(\Omega_t)$,
find $u^\varepsilon_t\in V(\Omega_t)$ such that
\begin{equation}\label{1.7}
\mathcal{L}^\varepsilon(u^\varepsilon_t, v;\Omega_t)
\le \mathcal{L}^\varepsilon(u^\varepsilon_t, v^\varepsilon_t;\Omega_t)
\quad\text{for all } v\in V(\Omega_t).
\end{equation}
Since $\mathcal{L}^\varepsilon$ is affine in $v$,
the first order optimality condition is given by
\begin{equation*}
u^\varepsilon_t\in V(\Omega_t),\quad
\langle \partial^\varepsilon_u \mathcal{E}(u^\varepsilon_t; \Omega_t), u \rangle =0
\quad\text{for all } u\in V(\Omega_t).
\end{equation*}
The dual (sup-inf) problem (see \cite[Chapter~6]{ET/76}) reads:
for fixed $u^\varepsilon_t\in V(\Omega_t)$,
find $v^\varepsilon_t\in V(\Omega_t)$ such that
\begin{equation*}
\mathcal{L}^\varepsilon(u^\varepsilon_t, v^\varepsilon_t; \Omega_t)
\le \mathcal{L}^\varepsilon(u, v^\varepsilon_t; \Omega_t)
\quad\text{for all } u\in V(\Omega_t) .
\end{equation*}
Note that  $\mathcal{L}^\varepsilon$ with respect to $u$ is not a
linear continuous functional on the dual space $V(\Omega_t)^\star$.

The corresponding nonlinear optimization theory was developed
in e.g. \cite{IK/08,MZ/79,ZK/79} as follows.
If the variation $\partial_u(\partial^\varepsilon_u \mathcal{E})\in
\mathscr{L}(V(\Omega_t), V(\Omega_t)^\star)$ with respect to $u$ in \eqref{1.5} exists,
and the associated adjoint operator $[\partial_u(\partial^\varepsilon_u \mathcal{E})
(u^\varepsilon_t; \Omega_t)]^\star\in \mathscr{L}(V(\Omega_t), V(\Omega_t)^\star)$
satisfying
\begin{equation*}
\langle [\partial_u(\partial^\varepsilon_u \mathcal{E}) (u^\varepsilon_t; \Omega_t)]^\star
v, u \rangle
=\langle \partial_u(\partial^\varepsilon_u \mathcal{E}) (u^\varepsilon_t; \Omega_t)
u, v \rangle \quad\text{for all } u,v\in V(\Omega_t)
\end{equation*}
is surjective with respect to $u^\varepsilon_t$, then the optimality condition is given by
\begin{equation}\label{1.8}
v^\varepsilon_t\in V(\Omega_t),\quad
\langle [\partial_u(\partial^\varepsilon_u \mathcal{E}) (u^\varepsilon_t; \Omega_t)]^\star
v^\varepsilon_t, v \rangle =0 \quad\text{for all } v\in V(\Omega_t).
\end{equation}
For the abstract theory  associated to adjoint operators we cite \cite{EL/13,Ibr/06,MAS/96}.
To justify \eqref{1.8}, we shall linearize $\partial_u \mathcal{E}$ around the primal solution
$u^\varepsilon_t$ to \eqref{1.7} (see Theorem~\ref{theo3}) and suggest a suitable
linearized functional $(u,v)\mapsto\tilde{\mathcal{L}}^\varepsilon (0, u^\varepsilon_t, u, v):
V(\Omega_t)^2\mapsto\mathbb{R}$ such that
\begin{equation}\label{1.9}
\tilde{\mathcal{L}}^\varepsilon (0, u^\varepsilon_t, u^\varepsilon_t, v; \Omega_t)
=\mathcal{L}^\varepsilon(u^\varepsilon_t, v; \Omega_t)
\quad\text{for } v\in V(\Omega_t) .
\end{equation}

\paragraph{Task (iii): shape derivative.}
Our purpose is to calculate a shape derivative of the mapping
$t\mapsto\mathcal{J}(u^\varepsilon_t; \Omega_t)$
that is expressed by the one-sided limit (see \cite{DZ/11,SZ/92}):
\begin{equation}\label{1.10}
\partial_t \mathcal{J}(u^\varepsilon_t; \Omega_t)
=\lim_{s\to0^+} \frac{1}{s}
\bigl( \mathcal{J}(u^\varepsilon_{t+s}; \Omega_{t+s})
-\mathcal{J}(u^\varepsilon_t; \Omega_t) \bigr).
\end{equation}
If a saddle-point $(u^\varepsilon_t, v^\varepsilon_t)\in V(\Omega_t)^2$
based on \eqref{1.7} and \eqref{1.9} exists, then the optimal value misfit
function defined in \eqref{1.4} is evidently equal to the optimal value Lagrange function
\begin{multline}\label{1.11}
\tilde{\mathcal{L}}^\varepsilon (0, u^\varepsilon_t, u^\varepsilon_t, v^\varepsilon_t; \Omega_t)
=\mathcal{L}^\varepsilon(u^\varepsilon_t, v^\varepsilon_t; \Omega_t)
=\mathcal{J}(u^\varepsilon_t; \Omega_t)
-\langle \partial^\varepsilon_u \mathcal{E}(u^\varepsilon_t; \Omega_t),
v^\varepsilon_t \rangle \quad\text{subject to }\\
\tilde{\mathcal{L}}^\varepsilon (0, u^\varepsilon_t, u^\varepsilon_t, v;\Omega_t) \le
\tilde{\mathcal{L}}^\varepsilon (0, u^\varepsilon_t, u^\varepsilon_t, v^\varepsilon_t;\Omega_t)
\le \tilde{\mathcal{L}}^\varepsilon (0, u^\varepsilon_t, u, v^\varepsilon_t;\Omega_t)\\
\text{for all } (u, v)\in V(\Omega_t)^2.
\end{multline}
Henceforth, we have the following identity for the shape derivative according to \eqref{1.10}:
\begin{multline}\label{1.12}
\partial_t \mathcal{J}(u^\varepsilon_t; \Omega_t) =\partial_t \tilde{\mathcal{L}}^\varepsilon
(0, u^\varepsilon_t, u^\varepsilon_t, v^\varepsilon_t; \Omega_t)\\
=\lim_{s\to0^+} \frac{1}{s} \bigl( \tilde{\mathcal{L}}^\varepsilon
(s, u^\varepsilon_t, u^\varepsilon_{t+s}, v^\varepsilon_{t+s}; \Omega_{t})
-\tilde{\mathcal{L}}^\varepsilon (0, u^\varepsilon_t, u^\varepsilon_t, v^\varepsilon_t;
\Omega_t) \bigr).
\end{multline}
In order to construct a proper $\tilde{\mathcal{L}}^\varepsilon$,
using a diffeomorphic coordinate transformation $y=\phi_s(x)$ such that
$\phi_s: \Omega_t\mapsto\Omega_{t+s}$ (see \cite[Chapter~2]{SZ/92}),
the bijection $ V(\Omega_{t+s})\mapsto  V(\Omega_{t})$, $u\mapsto u\circ\phi_s$
provides the perturbed Lagrangian as
\begin{equation}\label{1.13}
\widetilde{\mathcal{L}}^\varepsilon(s, u\circ\phi_s, u\circ\phi_s, v\circ\phi_s; \Omega_t)
=\mathcal{L}^\varepsilon(u, v; \Omega_{t+s}) \quad\text{for } (u, v)\in V(\Omega_{t+s})^2.
\end{equation}
Then the results of Delfour--Zolesio \cite{DZ/11} on shape differentiabiliy
can be applied to justify the limit in \eqref{1.12},
see respective Theorem~\ref{theo4} and its Corollary~\ref{corol1}.

\paragraph{Task (iv): limit as $\varepsilon\to0^+$.}
Taking the limit as $\varepsilon\to0^+$ in relations \eqref{1.11}
we shall prove the  optimality conditions
(see Theorem~\ref{theo5} and its Corollary~\ref{corol3}).
However, we cannot pass to the limit in \eqref{1.12} due to the presence
of the unbounded term $\beta_\varepsilon^\prime$.
We conjecture that the limit problem \eqref{1.2} is not differentiable.
This agrees with the assertion that VIs are not Fr\'{e}chet differentiable
with respect to shape (see \cite{LSZ/15}).
Therefore, in the numerical treatment we rely on the approximation \eqref{1.12}
with small $\varepsilon>0$ for the shape derivative $\partial_t \mathcal{J}$.

\paragraph{Task (v): shape optimization.}
Commonly adopted in shape optimization, the gradient method needs
a descent direction minimizing the objective map $\Omega_t\mapsto \mathcal{J}$
such that $\partial_t \mathcal{J}<0$.
This can be attained by a proper choice of the transformation $\phi_s$ entering
implicitly in formula \eqref{1.12} (see Corollary~\ref{corol2}).
Realizing the optimization algorithm for crack shape identification,
from our numerical tests we report the following feature.
Those parts of the crack faces which are in contact
(where the non-penetration constraint is active) are hidden from identification.
To identify a crack needs its faces to be open (that is, VI turns into unconstrained equation)
in accordance with the concept of destructive physical analysis (DPA).

\section{Cohesive crack problem}\label{sec2}

We start with a detailed description of the geometry.
Let $\Omega\subset\mathbb{R}^d$, $d=2,3$, be a fixed hold-all domain
with Lipschitz boundary $\partial\Omega$.
For the time-parameter $t\in(t_0,t_1)$,  $t_0<t_1$, we consider a parameter-dependent
geometry $\Omega_t =(\Gamma^{\rm D}_t, \Gamma^{\rm N}_t,
\Gamma^{\rm O}_t, \Sigma_t)$ defined as follows. 
For brevity we use a single notation $\Omega_t$ for the collection of 
geometric objects describing a broken domain $\Omega\setminus\Sigma_t$ 
by means of the Dirichlet, Neumann, observation boundaries, 
and the breaking line, respectively.

The outer boundary is split into two variable parts such that
$\partial\Omega =\overline{\Gamma^{\rm D}_t}\cup \overline{\Gamma^{\rm N}_t}$
and $\Gamma^{\rm D}_t\cap \Gamma^{\rm N}_t =\emptyset$
with normal vector $n_t$ outward to $\Omega$.
The observation boundary is $\Gamma^{\rm O}_t\subset \Gamma^{\rm N}_t$.
The domain is split into two variable sub-domains $\Omega^\pm_t$
with Lipschitz boundaries $\partial\Omega^\pm_t$ and outward
normal vectors $n^\pm_t$ such that $n^\pm_t =n_t$ at $\partial\Omega$.
The conditions $\Gamma^{\rm D}_t\cap\partial\Omega^+_t\not=\emptyset$
and $\Gamma^{\rm D}_t\cap\partial\Omega^-_t\not=\emptyset$
are needed to guarantee the Korn--Poincare inequality.
These two domains are separated by a breaking manifold (the free-interface)
$\Sigma_t =\partial\Omega^+_t\cap \partial\Omega^-_t$
with normal direction $\nu_t =n^-_t =-n^+_t$
such that $\Omega =\Omega^+_t\cup \Omega^-_t\cup \Sigma_t$.
An example geometry of $\Omega_t$ is sketched in 2D in Figure~\ref{fig1}.
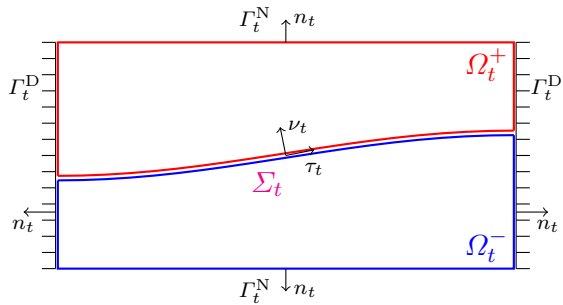
\begin{figure}[hbt!]
\begin{center}
\begin{tikzpicture}[scale=1.5]
\foreach \X in {2} {
\foreach \Y in {1} {
\foreach \d in {\Y/50} {
\foreach \A in {\Y/5} {
\foreach \K in {pi/2} {
\draw [thick, red, domain=-\X:\X, samples=20]
plot (\x, {\A*sin(\K*\x/\X r)+\d});
\draw [thick, red, -] (-\X,{\A*sin(-\K r)+\d})--(-\X,\Y)--(\X,\Y)--(\X,{\A*sin(\K r)+\d});
\draw [thick, blue, domain=-\X:\X, samples=20]
plot (\x, {\A*sin(\K*\x/\X r)-\d});
\draw [thick, blue, -] (-\X,{\A*sin(-\K r)-\d})--(-\X,-\Y)--(\X,-\Y)--(\X,{\A*sin(\K r)-\d});
\draw [-] (\X*1.01,-\Y)--(\X*1.01,\Y)
node[pos=.8, right] {$\;\Gamma^{\rm D}_t\;$}
node[red, pos=.9, left] {\large $\Omega^+_t$}
node[blue, pos=.1, left] {\large $\Omega^-_t$};
\draw [->] (\X*1.01,-\Y*.5)--(\X*1.15,-\Y*.5)
node[pos=1, below] {$n_t$};
\draw [-] (-\X*1.01,-\Y)--(-\X*1.01,\Y)
node[pos=.8, left] {$\Gamma^{\rm D}_t\;$};
\draw [->] (-\X*1.01,-\Y*.5)--(-\X*1.15,-\Y*.5)
node[pos=1, below] {$n_t$};
\draw [->] (0,\Y)--(0,\Y*1.2)
node[pos=1, left] {$\Gamma^{\rm N}_t\;$}
node[pos=1, right] {$n_t$};
\draw [->] (0,-\Y)--(0,-\Y*1.2)
node[pos=1, left] {$\Gamma^{\rm N}_t\;$}
node[pos=1, right] {$n_t$};
\draw [->] (0,0)--(-.05,.25)
node[magenta, pos=-1, left] {\large $\Sigma_t$}
node[pos=1, right] {$\nu_t$};
\draw [->] (0,0)--(.25,.05)
node[pos=1, below=2pt] {$\tau_t$};
\foreach \N in {7} {
\foreach \n in {0,...,\N} {
\draw [-] (\X*1.01,\n*\Y/\N)--(\X*1.07,\n*\Y/\N)
(\X*1.01,-\n*\Y/\N)--(\X*1.07,-\n*\Y/\N)
(-\X*1.01,\n*\Y/\N)--(-\X*1.07,\n*\Y/\N)
(-\X*1.01,-\n*\Y/\N)--(-\X*1.07,-\n*\Y/\N);
}}
}}}}};
\end{tikzpicture}
\end{center}
\caption{An example configuration of variable geometry $\Omega_t$ in 2D.}\label{fig1}
\end{figure}
We assume that these geometric properties are preserved for all  $t\in(t_0,t_1)$
under suitable shape perturbations, which we specify below in Section~\ref{sec4}.

For fixed $t$, we consider a linear elastic body that occupies the disconnected domain
$\Omega\setminus\Sigma_t =\Omega^+_t\cup \Omega^-_t$.
By this, $d$-dimensional vectors of displacement $u(x)$ at points
$x\in\Omega\setminus\Sigma_t$ admit discontinuity across $\Sigma_t$
resulting in the jump $[\![u]\!] =u|_{\Sigma_t\cap \partial\Omega^+_t}
-u|_{\Sigma_t\cap \partial\Omega^-_t}$.
For further use we employ an orthogonal decomposition of admissible $[\![u]\!]$
into the normal component with factor $\nu_t\cdot[\![u]\!]$ and the tangential vector
$[\![u]\!]_{\tau_t}$ at the interface such that
\begin{equation}\label{2.1}
[\![u]\!] =(\nu_t\cdot[\![u]\!])\nu_t +[\![u]\!]_{\tau_t},
\quad \nu_t\cdot[\![u]\!]\ge0\quad\text{on }\Sigma_t.
\end{equation}
The latter inequality in \eqref{2.1} describes the non-penetration, see \cite{KK/00}.

The essential issue of modeling is to introduce a density at $\Sigma_t$
for the surface energy $\mathcal{S}$ in \eqref{1.1} that is consistent with physics.
Based on the decomposition \eqref{2.1}, we set
\begin{equation}\label{2.2}
\mathcal{S}([\![u]\!];\Sigma_t) =\int_{\Sigma_t} \bigl\{
\alpha_{\rm f}([\![u]\!]_{\tau_t}) +\alpha_{\rm c}(\nu_t\cdot[\![u]\!]) \bigr\} \,dS_x.
\end{equation}
The former, shear-induced term in \eqref{2.2}, is associated with friction
between the crack surfaces.
Let the mapping $\xi\mapsto\alpha_{\rm f}(\xi): \mathbb{R}^d\mapsto\mathbb{R}$,
and its first and second derivatives be uniformly continuous functions, satisfying
for constants $K_{\rm f}, K_{\rm f 1}, K_{\rm f 2}\ge 0$ and all $\xi$,
\begin{equation}\label{2.3}
-K_{\rm f} |\xi|\le\alpha_{\rm f}(\xi),\quad |\nabla \alpha_{\rm f}(\xi)|\le K_{\rm f 1},
\quad |\nabla^2 \alpha_{\rm f}(\xi)|\le K_{\rm f 2}.
\end{equation}
For example, we have in mind a standard regularization of the Coulomb law
(see e.g. \cite[Section~4.3.3]{SM/91}) with the positive, convex function
\begin{equation}\label{2.4}
\alpha_{\rm f}(\xi) =F_{\bf b} \sqrt{\delta^2 +|\xi|^2},
\end{equation}
where $\delta>0$ is small, and $F_{\bf b}>0$ is the friction bound.
In this case, $K_{\rm f} =0$, $K_{\rm f 1} =F_{\bf b}$,
and $K_{\rm f 2} =F_{\bf b}/\delta$.
For convenience, the function $\alpha_{\rm f}(s)$ in one variable $s\in\mathbb{R}$
together with its first two derivatives are depicted in Figure~\ref{fig2}.
\begin{figure}[hbt]
\begin{minipage}[b]{.33\linewidth}
\begin{center}
\raisebox{0mm}{
\begin{tikzpicture}[scale=1]
\foreach \X in {2} {
\foreach \Y in {2} {
\foreach \F in {1} {
\foreach \D in {0.5} {
\draw [thick, blue, domain=-\X:\X, samples=50]
plot (\x, {\F*sqrt((\D)^2 +(\x)^2)});
\draw [-] (\X*.5,\Y*1.1)--(\X*.5,\Y*1.1)
node[blue, pos=1, below] {\large $\alpha_{\rm f}(s)$};
\draw [->] (-\X,0)--(\X*1.1,0)
node[pos=1, right] {$s$};
\draw [->] (0,-\Y*0.1)--(0,\Y*1.1)
node[pos=0, left] {$0$} node[pos=0.4, left] {$\delta$};
\draw [densely dotted]
(0,0)--(-\X,\Y*\F)  (0,0)--(\X,\Y*\F);
}}}};
\end{tikzpicture}
}
\end{center}
\end{minipage}
\begin{minipage}[b]{.33\linewidth}
\begin{center}
\raisebox{0mm}{
\begin{tikzpicture}[scale=1]
\foreach \X in {2} {
\foreach \Y in {2} {
\foreach \F in {1} {
\foreach \D in {0.5} {
\draw [thick, magenta, domain=-\X:\X, samples=50]
plot (\x, {\F*\x/sqrt((\D)^2 +(\x)^2)});
\draw [-] (\X*.6,\Y*.8)--(\X*.6,\Y*.8)
node[magenta, pos=1, below] {\large $\alpha^\prime_{\rm f}(s)$};
\draw [->] (-\X,0)--(\X*1.1,0)
node[pos=1, right] {$s$};
\draw [->] (0,-\Y*.6)--(0,\Y*.6)
node[pos=0.55, left] {$0$};
\draw [densely dotted]
(-\X,-\Y*.5)--(\X,-\Y*.5)
node[pos=0.63, above] {$-K_{\rm f 1}$}
(-\X,\Y*.5)--(\X,\Y*.5)
node[pos=0.4, below] {$K_{\rm f 1}$};
}}}};
\end{tikzpicture}
}
\end{center}
\end{minipage}
\begin{minipage}[b]{.33\linewidth}
\begin{center}
\raisebox{0mm}{
\begin{tikzpicture}[scale=1]
\foreach \X in {2} {
\foreach \Y in {2} {
\foreach \F in {1} {
\foreach \D in {0.5} {
\draw [thick, red, domain=-\X:\X, samples=50]
plot (\x, {\F*(\D)^2/(sqrt((\D)^2 +(\x)^2))^3});
\draw [-] (\X*.5,\Y)--(\X*.5,\Y)
node[red, pos=1, below] {\large $\alpha^{\prime\prime}_{\rm f}(s)$};
\draw [->] (-\X,0)--(\X*1.1,0)
node[pos=1, right] {$s$};
\draw [->] (0,-\Y*.1)--(0,\Y*1.1)
node[pos=0, left] {$0$};
\draw [densely dotted] (-\X,\Y)--(\X,\Y)
node[pos=0.35, below] {$K_{\rm f 2}$};
}}}};
\end{tikzpicture}
}
\end{center}
\end{minipage}
\medskip
\caption{Example graphics of $\alpha_{\rm f}, \alpha^\prime_{\rm f},
\alpha^{\prime\prime}_{\rm f}$ in 1d.}\label{fig2}
\end{figure}
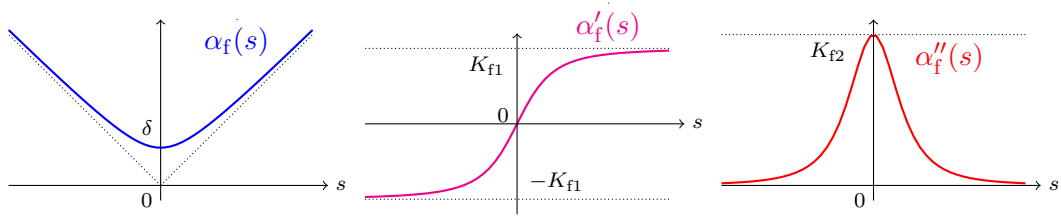

The latter term in \eqref{2.2} associates cohesion between the crack surfaces.
Let $s\mapsto\alpha_{\rm c}(s): \mathbb{R}\mapsto\mathbb{R}$
and its second derivative be uniformly continuous functions,
and let there exist constant $K_{\rm c}, K_{\rm c 1}, K_{\rm c 2}\ge0$ such that
\begin{equation}\label{2.5}
-K_{\rm c} |s|\le\alpha_{\rm c}(s),\quad |\alpha_{\rm c}^\prime(s)|\le K_{\rm c 1},
\quad |\alpha_{\rm c}^{\prime\prime}(s)|\le K_{\rm c 2}.
\end{equation}
From the physics literature (e.g. \cite{Kit/08}) we suggest
the following generic function
\begin{equation}\label{2.6}
\alpha_{\rm c}(s) =K_{\rm c}\, \frac{s}{\kappa +|s|^{m}},
\end{equation}
where $K_{\rm c}>0$ is related to the fracture toughness,
and $\kappa>0$, $m\ge1$ are parameters.
In this case, $K_{\rm c 1}$ and $K_{\rm c 2}$ are proportional to $K_{\rm c}$.
The example of $\alpha_{\rm c}, \alpha_{\rm c}^\prime,
\alpha_{\rm c}^{\prime\prime}$ for $m=4$ is depicted in Figure~\ref{fig3}.
\begin{figure}[hbt]
\begin{minipage}[b]{.33\linewidth}
\begin{center}
\raisebox{0mm}{
\begin{tikzpicture}[scale=1]
\foreach \X in {2} {
\foreach \Y in {2} {
\foreach \G in {.65} {
\foreach \K in {1} {
\foreach \M in {2} {
\draw [thick, blue, domain=-\X:\X, samples=50]
plot (\x, {\G*\x/(1 +\K*(\x)^(2*\M))});
\draw [-] (\X*.8,\Y*.5)--(\X*.8,\Y*.5)
node[blue, pos=1, below] {\large $\alpha_{\rm c}(s)$};
\draw [->] (-\X,0)--(\X*1.1,0)
node[pos=1, right] {$s$};
\draw [->] (0,-\Y)--(0,\Y*1.1)
node[pos=0.55, left] {$0$};
\draw [densely dotted]
(-\X,-\Y*\G)--(\X,\Y*\G)
(-\X,-\Y*.2)--(\X,-\Y*.2)
(-\X,\Y*.2)--(\X,\Y*.2)
;
}}}}};
\end{tikzpicture}
}
\end{center}
\end{minipage}
\begin{minipage}[b]{.33\linewidth}
\begin{center}
\raisebox{0mm}{
\begin{tikzpicture}[scale=1]
\foreach \X in {2} {
\foreach \Y in {2} {
\foreach \G in {.65} {
\foreach \K in {1} {
\foreach \M in {2} {
\draw [thick, magenta, domain=-\X:\X, samples=50]
plot (\x, {\G*(1-\K*(2*\M-1)*(\x)^(2*\M))/(1 +\K*(\x)^(2*\M))^2});
\draw [-] (\X*.6,\Y*.6)--(\X*.6,\Y*.6)
node[magenta, pos=1, below] {\large $\alpha_{\rm c}^\prime(s)$};
\draw [->] (-\X,0)--(\X*1.1,0)
node[pos=1, right] {$s$};
\draw [->] (0,-\Y)--(0,\Y*1.1)
node[pos=0.55, left] {$0$};
\draw [densely dotted]
(-\X,-\Y*.33)--(\X,-\Y*.33)
node[pos=0.63, above] {$-K_{\rm c 1}$}
(-\X,\Y*.33)--(\X,\Y*.33)
node[pos=0.28, below] {$K_{\rm c 1}$};
}}}}};
\end{tikzpicture}
}
\end{center}
\end{minipage}
\begin{minipage}[b]{.33\linewidth}
\begin{center}
\raisebox{0mm}{
\begin{tikzpicture}[scale=1]
\foreach \X in {2} {
\foreach \Y in {2} {
\foreach \G in {.65} {
\foreach \K in {1} {
\foreach \M in {2} {
\draw [thick, red, domain=-\X:\X, samples=50]
plot (\x,
{\G*2*\K*\M*(\x)^(2*\M-1)*(\K*(2*\M-1)*(\x)^(2*\M)-2*\M-1)
/(1 +\K*(\x)^(2*\M))^3});
\draw [-] (-\X*.7,\Y)--(-\X*.7,\Y)
node[red, pos=1, below] {\large $\alpha_{\rm c}^{\prime\prime}(s)$};
\draw [->] (-\X,0)--(\X*1.1,0)
node[pos=1, right] {$s$};
\draw [->] (0,-\Y)--(0,\Y*1.1)
node[pos=0.55, left] {$0$};
\draw [densely dotted] (-\X,-\Y)--(\X,-\Y)
node[pos=0.35, above] {$-K_{\rm c 2}$}
(-\X,\Y)--(\X,\Y)
node[pos=0.6, below] {$K_{\rm c 2}$};
}}}}};
\end{tikzpicture}
}
\end{center}
\end{minipage}
\medskip
\caption{Example graphics of $\alpha_{\rm c}, \alpha_{\rm c}^\prime,
\alpha_{\rm c}^{\prime\prime}$ as $\kappa=1$ and $m=4$.}\label{fig3}
\end{figure}
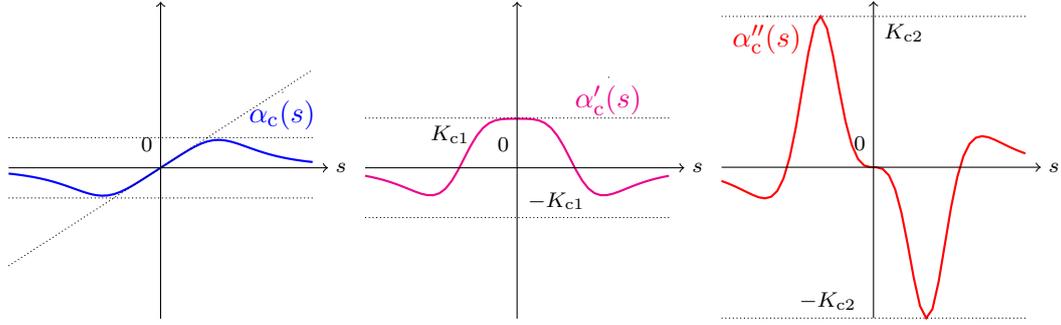
In particular, the left plot in Figure~\ref{fig3} depicts the typical softening phenomenon
for growing $s$.
It is worth noting that the left branch of $\alpha_{\rm c}(\nu_t\cdot[\![u]\!])$ for
$\nu_t\cdot[\![u]\!]<0$ implies a normal compliance and it is avoided
when the non-penetration $\nu_t\cdot[\![u]\!]\ge0$ in \eqref{2.1} holds.

The symmetric $d$-by-$d$ tensors of linearized strain $\epsilon$ and
the Cauchy stress $\sigma$ are given by
\begin{equation}\label{2.7}
\epsilon(u) =\frac{1}{2} (\nabla u +\nabla u^\top),
\quad \sigma(u) =C\epsilon(u),
\end{equation}
where $(\nabla u) =(\partial u_i/ \partial x_j)$ for $i,j =1,\ldots,d$,
the transposition $(\,\cdot\,)^\top$ swaps columns for rows.
A symmetric  fourth order tensor of elastic coefficients
$C(x)\in W^{1,\infty}(\Omega)^{d\times d\times d\times d}$, 
such that $C_{ijkl} =C_{jikl} =C_{klij}$ for $i, j, k, l =1,\ldots,d$, 
is positive definite and fulfills the Korn--Poincare inequality:
there exists $K_{\rm KP}>0$ such that
\begin{equation}\label{2.15}
\int_{\Omega\setminus\Sigma_t} \sigma(u)\cdot \epsilon(u) \,dx
\ge K_{\rm KP} \|u\|^2_{H^1(\Omega\setminus\Sigma_t)^d}
\quad\text{ for }u\in V(\Omega_t).
\end{equation}
over the Sobolev space
\begin{equation}\label{2.9}
V(\Omega_t) =\{u\in  H^1(\Omega^+_t)^d\cap H^1(\Omega^-_t)^d\vert
\quad u=0\text{ on }\Gamma^{\rm D}_t\}.
\end{equation}
For a boundary traction vector $g\in H^1(\partial\Omega)^d$,
we consider the following bulk energy
\begin{equation}\label{2.8}
\mathcal{B}(u; \Omega_t) =\frac{1}{2} \int_{\Omega\setminus\Sigma_t}
\sigma(u)\cdot \epsilon(u) \,dx -\int_{\Gamma^{\rm N}_t} g\cdot u \,dS_x.
\end{equation}
The feasible set corresponding to  the non-penetration condition in \eqref{2.1} reads
\begin{equation}\label{2.10}
K(\Omega_t) =\{u\in  V(\Omega_t)\vert
\quad \nu_t\cdot[\![u]\!]\ge0\text{ on }\Sigma_t\},
\end{equation}
which is a convex, closed cone.

\begin{theorem}[Well-posedness of cohesive crack problem]\label{theo1}
There exists a solution to the non-convex, constrained minimization problem:
find $u_t\in  K(\Omega_t)$ such that
\begin{equation}\label{2.11}
\mathcal{E}(u_t; \Omega_t) =\min_{u\in  K(\Omega_t)} \mathcal{E}(u; \Omega_t),
\end{equation}
where the total energy $\mathcal{E}$ according to \eqref{2.2} and \eqref{2.8} is given by
\begin{equation}\label{2.12}
\mathcal{E}(u; \Omega_t)
=\frac{1}{2} \int_{\Omega\setminus\Sigma_t} \sigma(u)\cdot \epsilon(u) \,dx
-\int_{\Gamma^{\rm N}_t} g\cdot u \,dS_x +\int_{\Sigma_t} \bigl\{
\alpha_{\rm f}([\![u]\!]_{\tau_t}) +\alpha_{\rm c}(\nu_t\cdot[\![u]\!]) \bigr\} \,dS_x.
\end{equation}
The solution satisfies the first-order optimality condition \eqref{1.2} in the form of VI:
\begin{multline}\label{2.13}
\int_{\Omega\setminus\Sigma_t} \sigma(u_t)\cdot \epsilon(u -u_t) \,dx
+\int_{\Sigma_t} \bigl\{
\nabla\alpha_{\rm f}([\![u_t]\!]_{\tau_t})\cdot [\![u -u_t]\!]_{\tau_t}\\
+\alpha_{\rm c}^\prime(\nu_t\cdot[\![u_t]\!])\, (\nu_t\cdot[\![u -u_t]\!])\bigr\} \,dS_x
\ge \int_{\Gamma^{\rm N}_t} g\cdot (u -u_t) \,dS_x
\end{multline}
for all test functions $u\in K(\Omega_t)$.
For smooth solutions the boundary value relations hold:
\begin{align}\label{2.14}
{\rm div}\, \sigma(u_t) =0 &\text{ in } \Omega\setminus\Sigma_t,\nonumber\\
u_t =0 \text{ on } \Gamma^{\rm D}_t,\quad
\sigma(u_t)n =g &\text{ on } \Gamma^{\rm N}_t,\nonumber\\
[\![\sigma(u_t)\nu_t]\!] =0,\quad (\sigma(u_t)\nu_t)_{\tau_t}
=\nabla\alpha_{\rm f}([\![u_t]\!]_{\tau_t}),&\nonumber\\
\nu_t\cdot[\![u_t]\!]\ge0, \quad \nu_t\cdot(\sigma(u_t)\nu_t)
\le\alpha_{\rm c}^\prime(\nu_t\cdot[\![u_t]\!]),&\nonumber\\
(\nu_t\cdot[\![u_t]\!])\, \bigl\{ \nu_t\cdot(\sigma(u_t)\nu_t)
-\alpha_{\rm c}^\prime(\nu_t\cdot[\![u_t]\!]) \bigr\} =0&\text{ on } \Sigma_t,
\end{align}
for the decomposition of vector $\sigma(u_t)\nu_t =\bigl( \nu_t\cdot (\sigma(u_t)\nu_t)
\bigr) \nu_t +(\sigma(u_t)\nu_t)_{\tau_t}$ according to \eqref{2.1}.
The last two lines in \eqref{2.14} are the complementarity conditions.
If both $\alpha_{\rm f}$ and $\alpha_{\rm c}$ were convex
(that is not $\alpha_{\rm c}$ in \eqref{2.6}),
then the solution $u_t$ to \eqref{2.11} and \eqref{2.13} would be unique.
\end{theorem}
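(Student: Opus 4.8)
The plan is to prove existence by the direct method of the calculus of variations. First I would establish coercivity of $\mathcal{E}(\cdot;\Omega_t)$ on $K(\Omega_t)$: using the Korn--Poincar\'e inequality \eqref{2.15}, the bulk quadratic term controls $\tfrac12 K_{\rm KP}\|u\|^2_{H^1(\Omega\setminus\Sigma_t)^d}$; the traction term $\int_{\Gamma^{\rm N}_t} g\cdot u\,dS_x$ is bounded by $C\|g\|_{H^1(\partial\Omega)^d}\|u\|_{H^1(\Omega\setminus\Sigma_t)^d}$ by the trace theorem and hence absorbed; and the surface terms are bounded below by $-K_{\rm f}\int_{\Sigma_t}|[\![u]\!]_{\tau_t}|\,dS_x-K_{\rm c}\int_{\Sigma_t}|\nu_t\cdot[\![u]\!]|\,dS_x$ using the lower bounds in \eqref{2.3} and \eqref{2.5}, which is again controlled by $C\|u\|_{H^1(\Omega\setminus\Sigma_t)^d}$ via trace estimates on $\Sigma_t$. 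Combining these yields $\mathcal{E}(u;\Omega_t)\ge c_1\|u\|^2_{H^1(\Omega\setminus\Sigma_t)^d}-c_2\|u\|_{H^1(\Omega\setminus\Sigma_t)^d}$ with $c_1>0$, so $\mathcal{E}$ is coercive on $V(\Omega_t)$, bounded below, and any minimizing sequence $\{u_k\}\subset K(\Omega_t)$ is bounded in $H^1(\Omega^+_t)^d\cap H^1(\Omega^-_t)^d$.

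Next I would extract a weakly convergent subsequence $u_k\rightharpoonup u_t$ in $V(\Omega_t)$ and pass to the limit. The bulk quadratic form $u\mapsto\tfrac12\int\sigma(u)\cdot\epsilon(u)\,dx$ is convex and strongly continuous, hence weakly lower semicontinuous; the linear traction term is weakly continuous by compactness of the trace $H^1(\Omega\setminus\Sigma_t)\hookrightarrow L^2(\partial\Omega)$. For the surface energy I would use that the trace operator $V(\Omega_t)\to L^2(\Sigma_t)^d$ is compact, so that $[\![u_k]\!]\to[\![u_t]\!]$ strongly in $L^2(\Sigma_t)^d$ (along a further subsequence, pointwise a.e.); then the uniform continuity and growth bounds on $\alpha_{\rm f},\alpha_{\rm c}$ in \eqref{2.3}, \eqref{2.5} allow passage to the limit in $\int_{\Sigma_t}\{\alpha_{\rm f}([\![u_k]\!]_{\tau_t})+\alpha_{\rm c}(\nu_t\cdot[\![u_k]\!])\}\,dS_x$ by dominated convergence --- this is the point where non-convexity of $\alpha_{\rm c}$ causes no trouble, precisely because the convergence on $\Sigma_t$ is \emph{strong}. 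Finally $K(\Omega_t)$ is weakly closed (it is convex and closed), so $u_t\in K(\Omega_t)$, and lower semicontinuity of the bulk part together with continuity of the remaining parts gives $\mathcal{E}(u_t;\Omega_t)\le\liminf_k\mathcal{E}(u_k;\Omega_t)=\inf_{K(\Omega_t)}\mathcal{E}$, establishing \eqref{2.11}.

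For the variational inequality \eqref{2.13} I would compute the directional derivative of $\mathcal{E}$ at the minimizer. Since $\alpha_{\rm f},\alpha_{\rm c}$ are $C^1$ (indeed $C^2$) and the bulk part is a smooth quadratic, the map $u\mapsto\mathcal{E}(u;\Omega_t)$ is G\^ateaux differentiable on $V(\Omega_t)$ with derivative given by $\langle\partial_u\mathcal{E}(u_t;\Omega_t),v\rangle=\int_{\Omega\setminus\Sigma_t}\sigma(u_t)\cdot\epsilon(v)\,dx-\int_{\Gamma^{\rm N}_t}g\cdot v\,dS_x+\int_{\Sigma_t}\{\nabla\alpha_{\rm f}([\![u_t]\!]_{\tau_t})\cdot[\![v]\!]_{\tau_t}+\alpha_{\rm c}^\prime(\nu_t\cdot[\![u_t]\!])(\nu_t\cdot[\![v]\!])\}\,dS_x$. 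Taking $v=u-u_t$ for arbitrary $u\in K(\Omega_t)$, the convexity of the cone $K(\Omega_t)$ gives $u_t+s(u-u_t)\in K(\Omega_t)$ for $s\in[0,1]$, and minimality forces $\tfrac{d}{ds}\mathcal{E}(u_t+s(u-u_t);\Omega_t)|_{s=0^+}\ge0$, which is exactly \eqref{2.13}. The strong boundary relations \eqref{2.14} follow by the standard procedure: first take $v\in C_0^\infty(\Omega^+_t\cup\Omega^-_t)^d$ (both signs) to get $\mathrm{div}\,\sigma(u_t)=0$ in $\Omega\setminus\Sigma_t$; then integrate by parts, use the Neumann part of $\partial\Omega$ to read off $\sigma(u_t)n=g$ on $\Gamma^{\rm N}_t$; and finally localize on $\Sigma_t$, splitting into tangential variations (which are unconstrained, yielding the equality for $(\sigma(u_t)\nu_t)_{\tau_t}$ and the continuity $[\![\sigma(u_t)\nu_t]\!]=0$) and normal variations (constrained by $\nu_t\cdot[\![u_t]\!]\ge0$, yielding the complementarity system in the last two lines). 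Uniqueness under the additional hypothesis that both $\alpha_{\rm f}$ and $\alpha_{\rm c}$ are convex is immediate: then $\mathcal{E}(\cdot;\Omega_t)$ is strictly convex on $V(\Omega_t)$ (the bulk term is strictly convex by \eqref{2.15}), and a strictly convex function has at most one minimizer over the convex set $K(\Omega_t)$.

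The main obstacle is the surface term: one must verify carefully that the traces onto $\Sigma_t$ of a weakly $H^1$-bounded sequence converge \emph{strongly} in $L^2(\Sigma_t)$, which relies on the Lipschitz regularity of $\partial\Omega^\pm_t$ and the compactness of the trace embedding, and then that the non-convex, non-quadratic integrand $\alpha_{\rm c}$ passes to the limit --- this is where the uniform continuity and at-most-linear-lower-growth assumptions \eqref{2.5} are essential, since without strong convergence one could not handle a non-convex density. A secondary technical point is making the integration-by-parts on $\Sigma_t$ rigorous for merely $H^1$ solutions (interpreting $\sigma(u_t)\nu_t$ in $H^{-1/2}(\Sigma_t)^d$), which is why \eqref{2.14} is stated only ``for smooth solutions.''
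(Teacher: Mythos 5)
Your proposal is correct and follows essentially the same route as the paper: the direct method with coercivity from the Korn--Poincar\'e inequality plus sublinear lower bounds on the surface and traction terms, compactness of the trace embedding into $L^2(\Sigma_t)$ to neutralize the non-convexity of $\alpha_{\rm c}$, weak closedness of the convex cone $K(\Omega_t)$, G\^ateaux differentiability for the VI, and convexity/monotonicity for the conditional uniqueness. The only cosmetic difference is that you invoke dominated convergence for the surface integrand where the paper cites the preservation of $L^2$-convergence under composition with uniformly continuous functions; both are valid.
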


\begin{proof}
On the right-hand side of \eqref{2.12}, the first, quadratic in $u$ 
integral term over $\Omega\setminus\Sigma_t$, is strongly positive 
by the Korn--Poincare inequality \eqref{2.15}. 
Using the Cauchy--Schwarz inequality, the other boundary integral terms over $\Sigma_t$
and $\Gamma^{\rm N}_t$ are bounded from below by a sub-linear in $u$ function
\begin{multline}\label{2.16}
\int_{\Sigma_t} \bigl\{
\alpha_{\rm f}([\![u]\!]_{\tau_t}) +\alpha_{\rm c}(\nu_t\cdot[\![u]\!]) \bigr\} \,dS_x
-\int_{\Gamma^{\rm N}_t} g\cdot u \,dS_x\\
\ge -\bigl(K_{\rm f} \|[\![u]\!]_{\tau_t}\|_{L^2(\Sigma_t)^d}
+K_{\rm c}  \|\nu_t\cdot[\![u]\!]\|_{L^2(\Sigma_t)} \bigr) \sqrt{|\Sigma_t|}
-\|g\|_{L^2(\Gamma^{\rm N}_t)^d} \|u\|_{L^2(\Gamma^{\rm N}_t)^d}
\end{multline}
by virtue of the properties for $\alpha_{\rm f}$, $\alpha_{\rm c}$ in \eqref{2.3}, \eqref{2.5}.
Therefore, estimating the jump by $\|[\![u]\!]\|^2_{L^2(\Sigma_t)}\le
2\|u\|^2_{L^2(\Sigma_t\cap \partial\Omega^+_t)}
+2\|u\|^2_{L^2(\Sigma_t\cap \partial\Omega^-_t)}$
and applying the trace inequality we have
\begin{equation}\label{2.17}
\|u\|_{L^2(\partial\Omega^\pm_t)^d}\le
\|u\|_{H^{1/2}(\partial\Omega^\pm_t)^d}\le K_{\rm tr} \|u\|_{H^1(\Omega^\pm_t)^d},
\quad u\in H^1(\Omega^\pm_t)^d.
\end{equation}
Then we get that $\mathcal{E}$ is radially unbounded, and thus coercive.
The functions $\alpha_{\rm f}$ and $\alpha_{\rm c}$ are uniformly continuous,
hence preserving $L^2$-convergence (see \cite{BJ/61}).
Using the compactness of the embedding of the traces of
$u$ at $\Sigma_t\cap\partial\Omega^\pm_t$,
from  $H^1(\Omega^\pm_t)$ into $L^2(\partial\Omega^\pm_t)$, it follows that
the mapping $u\mapsto\mathcal{E}(u)$ from $V(\Omega_t)\mapsto\mathbb{R}$
is weakly lower semi-continuous.

Let $\{u^n\}$, $n\in\mathbb{N}$, be an infimal sequence in $K(\Omega_t)$.
The coercivity of $\mathcal{E}$ implies the boundedness of $\{u^n\}$ in $V(\Omega_t)$.
Then, on a subsequence $\{u^{n_k}\}$, there exists an accumulation point
$u_t$ such that $u^{n_k}\rightharpoonup u_t$ weakly in
$H^1(\Omega\setminus\Sigma_t)^d$ as $n_k\to\infty$.
By weak closedness of $K(\Omega_t)$ we have $u_t\in K(\Omega_t)$.
Taking the limit inferior of $\mathcal{E}(u^{n_k})$, the weak lower semi-continuity
of $\mathcal{E}$ implies that $u_t$ attains the minimum in \eqref{2.11}.
Applying standard variational arguments implies the optimality condition \eqref{2.13}
and \eqref{2.14}, see details in \cite[Section~1.4]{KK/00}.
Moreover, if $\alpha_{\rm f}$, $\alpha_{\rm c}$ were convex,
then the integral over $\Sigma_t$ in \eqref{2.12} is monotone.
This would lead to uniqueness of $u_t$ as solution to \eqref{2.13},
which is then necessarily the unique solution for \eqref{2.11}.
\qed
\end{proof}

Next we approximate the VI \eqref{2.13} by a penalty method.
By itself penalization is a self-contained physical model allowing compliance,
see \cite{And/99} for the discussion.

\section{Lavrentiev based regularization and saddle-point problem}\label{sec3}

Let $\varepsilon_0 >0$.
For $\varepsilon\in (0, \varepsilon_0)$,
the standard penalization of the inequality constraint $s\ge0$ by
$-[s]^-/\varepsilon$ has only a generalized derivative $\mathcal{H}(-s)/\varepsilon$,
where $\mathcal{H}$ is the Heaviside step function such that
$\mathcal{H}(s)=1$ for $s>0$, otherwise $\mathcal{H}(s)=0$ for $s\le 0$.
We suggest a Lavrentiev based $C^1$-regularization
by the normal compliance $\beta_\varepsilon$ as follows.
Let the function $s\mapsto\beta_\varepsilon(s): \mathbb{R}\mapsto\mathbb{R}$
be concave and differentiable, with $\beta$ and $\beta^\prime$ uniformly continuous,
and let there exist $K_{\beta}, K_{\beta 1}\ge0$ such that
\begin{equation}\label{3.1}
\bigl| \beta_\epsilon(s) +\frac{[s]^-}{\varepsilon} \bigr|
\le K_{\beta},\quad
0\le \beta^\prime_\epsilon(s)\le \frac{K_{\beta 1}}{\varepsilon}.
\end{equation}
We assume that the following conditions hold, which describe relaxed complementarity
and compliance, respectively:
\begin{equation}\label{3.2}
\beta_\epsilon(s) [s]^+\ge -\varepsilon K_{\beta},\quad
\beta_\epsilon(s) [s]^-\le -\frac{([s]^-)^2}{\varepsilon}
+\varepsilon K_{\beta}.
\end{equation}
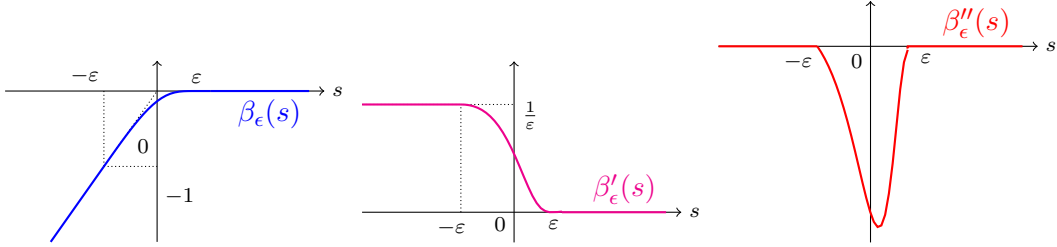
\begin{figure}[hbt]
\begin{minipage}[b]{.33\linewidth}
\begin{center}
\raisebox{0mm}{
\begin{tikzpicture}[scale=1]
\foreach \X in {2} {
\foreach \Y in {2} {
\foreach \E in {.7} {
\draw [->] (-\X,0)--(\X*1.1,0)
node[pos=1, right] {$s$}
node[pos=.25, above] {$-\varepsilon$}
node[pos=.6, above] {$\varepsilon$};
\draw [->] (0,-\Y)--(0,\Y*.2)
node[pos=.53, left] {$0$}
node[pos=.25, right] {$-1$};
\draw [densely dotted]
(-\E,-1)--(0,0) (-\E,0)--(-\E,-1) (-\E,-1)--(0,-1);
\draw [thick, blue,-] (-\E*\Y,-\Y)--(-\E,-1);
\draw [thick, blue, domain=-\E:\E, samples=25]
plot (\x, {-exp(2*(\x+\E)/(\x-\E))});
\draw [thick, blue,-] (\E,0)--(\X,0)
node[blue, pos=.6, below] {\large $\beta_\epsilon(s)$};
}}};
\end{tikzpicture}
}
\end{center}
\end{minipage}
\begin{minipage}[b]{.33\linewidth}
\begin{center}
\raisebox{0mm}{
\begin{tikzpicture}[scale=1]
\foreach \X in {2} {
\foreach \Y in {2} {
\foreach \E in {.7} {
\draw [->] (-\X,0)--(\X*1.1,0)
node[pos=1, right] {$s$}
node[pos=.28, below] {$-\varepsilon$}
node[pos=.6, below] {$\varepsilon$};
\draw [->] (0,-\Y*.2)--(0,\Y*1)
node[pos=.1, left] {$0$}
node[pos=.7, right] {$\frac{1}{\varepsilon}$};
\draw [densely dotted]
(-\X,1/\E)--(0,1/\E) (-\E,0)--(-\E,1/\E);
\draw [thick, magenta,-] (-\X,1/\E)--(-\E,1/\E);
\draw [thick, magenta, domain=-\E:\E*.9, samples=25]
plot (\x, {4*\E/(\x-\E)^2*exp(2*(\x+\E)/(\x-\E))});
\draw [thick, magenta,-] (\E*.9,0)--(\X,0)
node[magenta, pos=.6, above] {\large $\beta^\prime_\epsilon(s)$};
}}};
\end{tikzpicture}
}
\end{center}
\end{minipage}
\begin{minipage}[b]{.33\linewidth}
\begin{center}
\raisebox{0mm}{
\begin{tikzpicture}[scale=1]
\foreach \X in {2} {
\foreach \Y in {2} {
\foreach \E in {.7} {
\draw [->] (-\X,0)--(\X*1.1,0)
node[pos=1, right] {$s$}
node[pos=.25, below] {$-\varepsilon$}
node[pos=.65, below] {$\varepsilon$};
\draw [->] (0,-\Y*1.3)--(0,\Y*0.3)
node[pos=.75, left] {$0$};
\draw [thick, red, -] (-\X,0)--(-\E,0);
\draw [thick, red, domain=-\E:\E*.7, samples=25]
plot (\x, {-8*\E*(\x+\E)*exp(2*(\x+\E)/(\x-\E))/((\x-\E)^2)/((\x-\E)^2)});
\draw [thick, red,-] (\E*.69,-0.03)--(\E*.7,0)--(\X,0)
node[red, pos=.6, above] {\large $\beta^{\prime\prime}_\epsilon(s)$};
}}};
\end{tikzpicture}
}
\end{center}
\end{minipage}
\medskip
\caption{Example graphics of $\beta_\varepsilon, \beta^\prime_\varepsilon,
\beta^{\prime\prime}_\varepsilon$ for fixed $\varepsilon$.}\label{fig4}
\end{figure}
For example, we construct the following mollification of minimum function
\begin{equation}\label{3.3}
\beta_\epsilon(s) =\begin{cases}
s/\varepsilon & \text{ for } s<-\varepsilon\\
-\exp\bigl( 2(s +\varepsilon)/(s -\varepsilon) \bigr)
& \text{ for } -\varepsilon\le s<\varepsilon\\
0 & \text{ for } s\ge\varepsilon
\end{cases}
\end{equation}
which is depicted in Figure~\ref{fig4} together with its two derivatives.

\begin{lemma}\label{lem0}
For $\beta_\varepsilon$ from \eqref{3.3}, the properties \eqref{3.1} and \eqref{3.2}
hold true with $K_{\beta} =K_{\beta 1} =1$.
Moreover,  $\beta^{\prime\prime}_\varepsilon\le0$ implies that
$\beta^\prime_\varepsilon\ge0$ decreases monotonically,
and $\beta_\varepsilon\le0$ is concave and increases monotonically.
\end{lemma}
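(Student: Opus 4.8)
The plan is to verify the three displayed properties \eqref{3.1}, \eqref{3.2} directly from the piecewise definition \eqref{3.3}, treating the three regimes $s<-\varepsilon$, $-\varepsilon\le s<\varepsilon$, and $s\ge\varepsilon$ separately, and then to confirm the monotonicity/concavity claims by inspecting the signs of $\beta'_\varepsilon$ and $\beta''_\varepsilon$. First I would introduce the shorthand $w(s):=2(s+\varepsilon)/(s-\varepsilon)$ on the middle interval, noting $w(-\varepsilon)=0$, $w(\varepsilon^-)=-\infty$, and $w'(s)=-4\varepsilon/(s-\varepsilon)^2<0$, so that $\beta_\varepsilon(s)=-e^{w(s)}$ is smooth there with one-sided limits matching the outer branches ($\beta_\varepsilon\to -1$ as $s\to-\varepsilon^+$, matching $s/\varepsilon|_{s=-\varepsilon}=-1$; and $\beta_\varepsilon\to 0$ as $s\to\varepsilon^-$, matching the zero branch), which also gives the $C^1$-matching once the derivative is computed.

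Next I would establish \eqref{3.1}. For the bound on $\beta_\varepsilon+[s]^-/\varepsilon$: when $s\ge 0$ we have $[s]^-=0$ and $0\le -\beta_\varepsilon(s)\le 1$ (the middle-branch exponential lies in $(0,1]$, the right branch is $0$), so the quantity lies in $[-1,0]$; when $-\varepsilon\le s<0$, $[s]^-/\varepsilon=-s/\varepsilon\in(0,1]$ and $\beta_\varepsilon(s)\in[-1,0)$, and a short estimate shows the sum stays in $[-1,1]$; when $s<-\varepsilon$ the sum is exactly $0$. Hence $K_\beta=1$ works. For the derivative bound, $\beta'_\varepsilon(s)=1/\varepsilon$ on $s<-\varepsilon$, $=0$ on $s\ge\varepsilon$, and on the middle branch $\beta'_\varepsilon(s)=-e^{w(s)}w'(s)=\dfrac{4\varepsilon}{(s-\varepsilon)^2}e^{w(s)}>0$; since $(s-\varepsilon)^2\ge (2\varepsilon)^2$ only at $s=-\varepsilon$ — more carefully, one checks $\beta'_\varepsilon$ is maximal at the left endpoint where it equals $\dfrac{4\varepsilon}{4\varepsilon^2}e^0=\dfrac{1}{\varepsilon}$ — the bound $0\le\beta'_\varepsilon(s)\le 1/\varepsilon$ holds throughout, so $K_{\beta 1}=1$.

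Then I would turn to the relaxed complementarity and compliance inequalities \eqref{3.2}. Since $\beta_\varepsilon(s)\le 0$ everywhere, $\beta_\varepsilon(s)[s]^+\le 0$, and we need the sharper lower bound $\beta_\varepsilon(s)[s]^+\ge-\varepsilon K_\beta=-\varepsilon$: this is only nontrivial for $s>0$ where on $(0,\varepsilon)$ we have $|\beta_\varepsilon(s)|\le 1$ and $[s]^+=s<\varepsilon$, giving $|\beta_\varepsilon(s)[s]^+|<\varepsilon$, while on $s\ge\varepsilon$ the product is $0$. For the second inequality, $[s]^-\ne 0$ only for $s<0$: on $s<-\varepsilon$, $\beta_\varepsilon(s)[s]^-=(s/\varepsilon)(-s)=-s^2/\varepsilon=-([s]^-)^2/\varepsilon$, so the inequality holds with slack $\varepsilon K_\beta\ge 0$; on $-\varepsilon\le s<0$ one writes $\beta_\varepsilon(s)[s]^-+([s]^-)^2/\varepsilon=(-s)\bigl(\beta_\varepsilon(s)-s/\varepsilon\bigr)$ and, since $\beta_\varepsilon(s)\ge s/\varepsilon$ there (the exponential branch dominates the line — this is the one comparison that needs a line of justification, e.g. from $\beta_\varepsilon(-\varepsilon)=-1=(-\varepsilon)/\varepsilon$ together with $\beta'_\varepsilon\ge 1/\varepsilon$ being false in general, so instead one uses that $-e^{w(s)}\ge s/\varepsilon\iff e^{w(s)}\le -s/\varepsilon$, which follows from $w(s)\le\log(-s/\varepsilon)$, a scalar inequality on $[-\varepsilon,0)$ verified by checking endpoints and monotonicity), one gets $(-s)\bigl(\beta_\varepsilon(s)-s/\varepsilon\bigr)\le\varepsilon\cdot K_\beta$ after bounding each factor by, at worst, $\varepsilon$ and $1$ respectively up to constants; collecting cases yields $K_\beta=1$.

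Finally, the concavity and monotonicity assertions follow by differentiating once more: on $s<-\varepsilon$ and $s>\varepsilon$, $\beta''_\varepsilon\equiv 0$; on the middle branch a direct computation gives $\beta''_\varepsilon(s)=-\dfrac{8\varepsilon(s+\varepsilon)}{(s-\varepsilon)^4}e^{w(s)}$, which is $\le 0$ for $s\ge-\varepsilon$, hence $\beta''_\varepsilon\le 0$ globally; this immediately yields that $\beta'_\varepsilon\ge 0$ is monotonically nonincreasing and that $\beta_\varepsilon$ is concave, while $\beta'_\varepsilon\ge 0$ gives that $\beta_\varepsilon$ increases monotonically, and $\beta_\varepsilon\le 0$ was already observed. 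The main obstacle I anticipate is the scalar comparison $-e^{w(s)}\ge s/\varepsilon$ on $[-\varepsilon,0)$ underlying the second inequality in \eqref{3.2}: it reduces to a one-variable elementary inequality but requires a careful monotonicity argument rather than a one-line bound, and all constants must be tracked to confirm $K_\beta=K_{\beta 1}=1$ exactly rather than merely up to an absolute factor.
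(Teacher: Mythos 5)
Your overall strategy (case-by-case verification on the three branches of \eqref{3.3}, then reading off monotonicity and concavity from the sign of $\beta''_\varepsilon$) matches the paper's, and most individual steps are correct, including $\beta'_\varepsilon(s)=\frac{4\varepsilon}{(s-\varepsilon)^2}e^{w(s)}$ with maximum $1/\varepsilon$ attained at $s=-\varepsilon$, and the formula for $\beta''_\varepsilon$. However, the step you yourself single out as the main obstacle --- the scalar comparison $-e^{w(s)}\ge s/\varepsilon$ on $[-\varepsilon,0)$, equivalently $w(s)\le\log(-s/\varepsilon)$ --- is false: as $s\to 0^-$ one has $w(s)\to-2$, so $\beta_\varepsilon(s)\to-e^{-2}\approx-0.135$, while $s/\varepsilon\to 0$; hence $\beta_\varepsilon(s)<s/\varepsilon$ near $s=0$, and the proposed ``endpoints and monotonicity'' check cannot go through (the paper itself records $\beta_\varepsilon(0)=-e^{-2}$). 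Your identity $\beta_\varepsilon(s)[s]^-+([s]^-)^2/\varepsilon=(-s)\bigl(\beta_\varepsilon(s)-s/\varepsilon\bigr)$ is fine, but the claimed nonnegativity of the second factor is wrong.

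Fortunately the false comparison is superfluous. Since $-s>0$ on $[-\varepsilon,0)$, to get $(-s)\bigl(\beta_\varepsilon(s)-s/\varepsilon\bigr)\le\varepsilon$ it suffices that $\beta_\varepsilon(s)-s/\varepsilon\le 1$, which is exactly the first estimate in \eqref{3.1} (with $[s]^-=-s$) that you have already verified; multiplying it by $[s]^-\in(0,\varepsilon]$ yields $\beta_\varepsilon(s)[s]^-\le-([s]^-)^2/\varepsilon+\varepsilon$, i.e.\ the second inequality in \eqref{3.2} with $K_\beta=1$. This is precisely the paper's route: both inequalities in \eqref{3.2} are obtained by multiplying the two-sided bound in \eqref{3.1} by $[s]^+$ and $[s]^-$ respectively, with no pointwise comparison of the exponential branch against the line. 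Replacing your comparison step by this one-line multiplication repairs the argument; the remainder of your proof, including the monotonicity and concavity assertions, is sound.
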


\begin{proof}
The properties \eqref{3.1} can be easily checked.
To verify the first inequality in \eqref{3.2}, from \eqref{3.3}
we deduce that $\beta_\epsilon(s) [s]^+ =0$ for $s\ge \varepsilon$.
Here we use the complementary condition $[s]^- [s]^+ =0$
and $[s]^+ =0$ for $s<0$.
We further have $\beta_\epsilon(s)\ge -[s]^-/\varepsilon -K_{\beta}$
according to the first estimate in \eqref{3.1}.
Henceforth, after multiplication with $[s]^+\in [0,\varepsilon)$,
the lower bound $\beta_\epsilon(s) [s]^+ \ge -\varepsilon K_{\beta}$
holds for $0\le s <\varepsilon$.

Similarly, $\beta_\epsilon(s) [s]^- =-([s]^-)^2/\varepsilon$
for $s< -\varepsilon$ in \eqref{3.3},
and $\beta_\epsilon(s) [s]^- =0$ due to $[s]^- =0$ for $s\ge 0$.
The first estimate in \eqref{3.1}, that is
$\beta_\epsilon(s) \le -[s]^-/\varepsilon +K_{\beta}$,
after multiplication with $[s]^-\in(0,\varepsilon]$
leads to the upper bound $\beta_\epsilon(s) [s]^-\le
-([s]^-)^2/\varepsilon +\varepsilon K_{\beta}$ for $-\varepsilon\le s <0$.
This proves the second inequality in \eqref{3.2}.
\qed
\end{proof}

Using Lemma~\ref{lem0} we obtain the existence result for the penalized cohesive crack problem.

\begin{theorem}[Well-posedness of $\varepsilon$-regularized cohesive crack problem]\label{theo2}
There exists a solution to the penalty problem:
find $u^\varepsilon_t\in  V(\Omega_t)$ such that
\begin{multline}\label{3.4}
\int_{\Omega\setminus\Sigma_t} \sigma(u^\varepsilon_t)\cdot \epsilon(u) \,dx
+\int_{\Sigma_t} \bigl\{
\nabla\alpha_{\rm f}([\![u^\varepsilon_t]\!]_{\tau_t})\cdot [\![u]\!]_{\tau_t}\\
+[\alpha_{\rm c}^\prime +\beta_\varepsilon] (\nu_t\cdot[\![u^\varepsilon_t]\!])\,
(\nu_t\cdot[\![u]\!])\bigr\} \,dS_x =\int_{\Gamma^{\rm N}_t} g\cdot u \,dS_x
\end{multline}
for all test functions $u\in V(\Omega_t)$.
For smooth solutions the boundary value relations hold:
\begin{align}\label{3.5}
{\rm div}\, \sigma(u^\varepsilon_t) =0 &\text{ in } \Omega\setminus\Sigma_t,\nonumber\\
u^\varepsilon_t =0 \text{ on } \Gamma^{\rm D}_t,\quad
\sigma(u^\varepsilon_t)n =g &\text{ on } \Gamma^{\rm N}_t,\nonumber\\
[\![\sigma(u^\varepsilon_t)\nu_t]\!] =0,\; (\sigma(u^\varepsilon_t)\nu_t)_{\tau_t}
=\nabla\alpha_{\rm f}([\![u^\varepsilon_t]\!]_{\tau_t}),\;
\nu_t\cdot(\sigma(u^\varepsilon_t)\nu_t) =[\alpha_{\rm c}^\prime
+\beta_\varepsilon] (\nu_t\cdot[\![u^\varepsilon_t]\!])&\text{ on } \Sigma_t.
\end{align}
If both $\nabla\alpha_{\rm f}$ and $\alpha_{\rm c}^\prime$ were monotone,
then the solution $u^\varepsilon_t$ to \eqref{3.4} would be unique.
\end{theorem}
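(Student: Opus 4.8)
The plan is to follow the same variational scheme used in the proof of Theorem~\ref{theo1}, now applied to the penalized energy functional
\begin{equation*}
\mathcal{E}^\varepsilon(u; \Omega_t) =\mathcal{E}(u; \Omega_t)
+\int_{\Sigma_t} B_\varepsilon(\nu_t\cdot[\![u]\!]) \,dS_x,
\qquad B_\varepsilon'=\beta_\varepsilon,
\end{equation*}
so that \eqref{3.4} is precisely the Euler--Lagrange equation $\partial^\varepsilon_u\mathcal{E}(u^\varepsilon_t;\Omega_t)=0$ of \eqref{1.5} on the \emph{whole} space $V(\Omega_t)$ (no cone constraint). First I would check that $\mathcal{E}^\varepsilon$ is coercive on $V(\Omega_t)$. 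This is inherited from Theorem~\ref{theo1}: the quadratic bulk term is strongly positive by the Korn--Poincar\'e inequality \eqref{2.15}, the $\alpha_{\rm f},\alpha_{\rm c}$ and traction contributions are bounded below by a sublinear function of $\|u\|$ exactly as in \eqref{2.16}--\eqref{2.17}, and the extra term is nonnegative up to an $O(\varepsilon)$ constant, since by the first inequality in \eqref{3.1} and the relaxed complementarity/compliance bounds \eqref{3.2} one has $B_\varepsilon(s)\ge \tfrac{1}{2\varepsilon}([s]^-)^2 - C_\varepsilon \ge -C_\varepsilon$ for a constant $C_\varepsilon$ depending only on $K_\beta,\varepsilon$ (a primitive of the concave, monotone $\beta_\varepsilon\le 0$ from Lemma~\ref{lem0}); hence radial unboundedness survives.

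Next I would establish weak lower semicontinuity of $u\mapsto\mathcal{E}^\varepsilon(u;\Omega_t)$ on $V(\Omega_t)$. The bulk quadratic form is convex and continuous, hence weakly l.s.c.; the linear traction functional is weakly continuous by the compactness of the trace embedding $H^1(\Omega^\pm_t)\hookrightarrow\hookrightarrow L^2(\partial\Omega^\pm_t)$; the surface terms $\int_{\Sigma_t}\{\alpha_{\rm f}([\![u]\!]_{\tau_t})+\alpha_{\rm c}(\nu_t\cdot[\![u]\!])+B_\varepsilon(\nu_t\cdot[\![u]\!])\}\,dS_x$ are weakly continuous because $\alpha_{\rm f},\alpha_{\rm c}$ and $B_\varepsilon$ are (uniformly) continuous and the traces converge strongly in $L^2(\Sigma_t\cap\partial\Omega^\pm_t)$ along the weakly convergent sequence — here I use that $B_\varepsilon$ is $C^1$ with bounded derivative on any fixed bounded set once $\varepsilon$ is fixed, so it is Lipschitz on the range of the traces, preserving $L^2$-convergence (cf. the reference \cite{BJ/61} invoked in Theorem~\ref{theo1}). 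Then the direct method applies verbatim: take a minimizing sequence, extract a weakly convergent subsequence $u^{n_k}\rightharpoonup u^\varepsilon_t$ in $H^1(\Omega\setminus\Sigma_t)^d$ using coercivity, pass to the limit using weak l.s.c., and conclude that $u^\varepsilon_t$ minimizes $\mathcal{E}^\varepsilon$ over $V(\Omega_t)$. Since $\beta_\varepsilon$ is $C^1$, the functional $\mathcal{E}^\varepsilon$ is G\^ateaux differentiable, and the first-order condition at an interior minimizer is exactly the equation \eqref{3.4} for all $u\in V(\Omega_t)$; the strong-form relations \eqref{3.5} then follow by the standard localization/integration-by-parts argument as in \cite[Section~1.4]{KK/00}, the complementarity conditions of \eqref{2.14} collapsing to the single Robin-type identity $\nu_t\cdot(\sigma(u^\varepsilon_t)\nu_t)=[\alpha_{\rm c}'+\beta_\varepsilon](\nu_t\cdot[\![u^\varepsilon_t]\!])$ because there is no inequality constraint.

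For the uniqueness claim under the hypothesis that $\nabla\alpha_{\rm f}$ and $\alpha_{\rm c}'$ are monotone, I would argue directly on the equation \eqref{3.4} rather than on the energy: take two solutions $u^\varepsilon_{t,1},u^\varepsilon_{t,2}$, subtract the corresponding identities, and test with $u=u^\varepsilon_{t,1}-u^\varepsilon_{t,2}$. The bulk term yields $\int_{\Omega\setminus\Sigma_t}\sigma(u^\varepsilon_{t,1}-u^\varepsilon_{t,2})\cdot\epsilon(u^\varepsilon_{t,1}-u^\varepsilon_{t,2})\,dx\ge K_{\rm KP}\|u^\varepsilon_{t,1}-u^\varepsilon_{t,2}\|^2_{H^1}$; the $\alpha_{\rm f}$-term is nonnegative by monotonicity of $\nabla\alpha_{\rm f}$; the $\alpha_{\rm c}'$-term is nonnegative by the assumed monotonicity; and the $\beta_\varepsilon$-term is nonnegative because $\beta_\varepsilon$ is \emph{monotone nondecreasing} ($0\le\beta'_\varepsilon$ in \eqref{3.1}, and see Lemma~\ref{lem0}), so $(\beta_\varepsilon(a)-\beta_\varepsilon(b))(a-b)\ge0$ applied pointwise with $a=\nu_t\cdot[\![u^\varepsilon_{t,1}]\!]$, $b=\nu_t\cdot[\![u^\varepsilon_{t,2}]\!]$. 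Adding these, the sum of nonnegative terms equals zero, forcing $\|u^\varepsilon_{t,1}-u^\varepsilon_{t,2}\|_{H^1}=0$. The main obstacle in the whole argument is the treatment of the surface terms: one must make sure that the uniform continuity assumptions \eqref{2.3}, \eqref{2.5}, \eqref{3.1} together with compactness of the trace embedding genuinely deliver weak continuity of the nonlinear surface integrals (not merely weak lower semicontinuity), since the non-convex $\alpha_{\rm c}$ and concave $\beta_\varepsilon$ would otherwise be problematic — the point is that $\varepsilon$ is \emph{fixed}, so all nonlinearities act on a fixed bounded range of trace values and are effectively Lipschitz there, which is exactly the regime handled in Theorem~\ref{theo1}.
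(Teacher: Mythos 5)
Your proposal is correct, but it takes a genuinely different route from the paper. The paper does not minimize an energy: it treats \eqref{3.4} directly as a nonlinear operator equation for $\partial^\varepsilon_u\mathcal{E}$, proves coercivity of that operator via the lower bound \eqref{3.6} (using $\beta_\varepsilon(s)s\ge([s]^-)^2/\varepsilon-2\varepsilon K_\beta$ from \eqref{3.2}), proves weak continuity of the operator from the uniform continuity of the \emph{bounded} integrands $\nabla\alpha_{\rm f}$ and $\alpha_{\rm c}'+\beta_\varepsilon$ together with trace compactness, and then invokes a Galerkin approximation with the Brouwer fixed point theorem (citing \cite{Fra/94}); uniqueness and \eqref{3.5} are dispatched as ``standard''. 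You instead introduce the primitive $B_\varepsilon$ with $B_\varepsilon'=\beta_\varepsilon$ and run the direct method on the penalized energy $\mathcal{E}^\varepsilon$ over all of $V(\Omega_t)$, recovering \eqref{3.4} as the unconstrained Euler--Lagrange equation. This is legitimate because the operator is a potential operator, and it buys the extra information that a solution can be chosen as a global minimizer of $\mathcal{E}^\varepsilon$; the paper's operator-level formulation is what gets reused later (in Theorem~\ref{theo3} and Appendices~\ref{D}, \ref{B}), which is presumably why the authors prefer it. Two small points in your write-up deserve care: (i) under the general hypotheses \eqref{3.1}--\eqref{3.2} one only gets $B_\varepsilon(s)\ge -K_\beta s$ for $s\ge0$ (a linear, not constant, lower bound -- still ample for coercivity against the quadratic bulk, but your ``$\ge -C_\varepsilon$'' is literally true only for the specific \eqref{3.3}); and (ii) $B_\varepsilon$ has quadratic growth, hence is \emph{not} uniformly continuous, so the \cite{BJ/61}-type argument does not apply to it verbatim -- you need either the local Lipschitz bound $|B_\varepsilon(a)-B_\varepsilon(b)|\le(K_\beta+\varepsilon^{-1}\max(|a|,|b|))|a-b|$ combined with Cauchy--Schwarz and the $H^1$-boundedness of the minimizing sequence, or simply the observation that $B_\varepsilon''=\beta_\varepsilon'\ge0$ makes that term convex and therefore weakly lower semicontinuous. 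Your uniqueness argument by testing the difference of two solutions and using monotonicity of $\nabla\alpha_{\rm f}$, $\alpha_{\rm c}'$ and $\beta_\varepsilon$ together with the Korn--Poincar\'e inequality is exactly the standard argument the paper alludes to.
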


\begin{proof}
We apply arguments similar to those in the proof of Theorem~\ref{theo1}.
From the properties of $\nabla\alpha_{\rm f}$ in \eqref{2.3}, and
$\alpha_{\rm c}^\prime$ from \eqref{2.5}, the fact that
$\beta_\epsilon(s) s\ge ([s]^-)^2/\varepsilon -2\varepsilon K_{\beta}$ by \eqref{3.2},
and using the Cauchy--Schwarz, Korn--Poincare \eqref{2.15}, and trace inequalities
\eqref{2.17}, similarly to \eqref{2.16} we deduce the uniform lower bound
\begin{multline}\label{3.6}
\int_{\Omega\setminus\Sigma_t} \sigma(u)\cdot \epsilon(u) \,dx
+\int_{\Sigma_t} \bigl\{
\nabla\alpha_{\rm f}([\![u]\!]_{\tau_t})\cdot [\![u]\!]_{\tau_t}
+[\alpha_{\rm c}^\prime +\beta_\varepsilon] (\nu_t\cdot[\![u]\!])\,
(\nu_t\cdot[\![u]\!])\bigr\} \,dS_x\\
-\int_{\Gamma^{\rm N}_t} g\cdot u \,dS_x
\ge K_{\rm KP} \|u\|^2_{H^1(\Omega\setminus\Sigma_t)^d}
-K_{t {\rm f c} 1} \|u\|_{H^1(\Omega\setminus\Sigma_t)^d}
-2\varepsilon K_{\beta} |\Sigma_t|,
\end{multline}
where
\begin{equation}\label{3.7}
K_{t {\rm f c} 1} :=\bigl( \|g\|_{L^2(\Gamma^{\rm N}_t)^d}
+(K_{\rm f 1} +K_{\rm c 1}) \sqrt{2|\Sigma_t|} \bigr) K_{\rm tr}.
\end{equation}
Therefore, the operator associated to \eqref{3.4}, denoted following \eqref{1.5}
by $\partial^\varepsilon_u \mathcal{E}: V(\Omega_t)\mapsto V(\Omega_t)^\star$,
is coercive.
We have $\nabla\alpha_{\rm f}$ and $[\alpha_{\rm c}^\prime +\beta_\varepsilon]$
are uniformly continuous, and thus preserve $L^2$-convergence,
the operator $\partial^\varepsilon_u \mathcal{E}$ is weakly continuous in the following sense.
If $u^n\rightharpoonup u_t$ weakly in $H^1(\Omega\setminus\Sigma_t)^d$
as $n\to\infty$ (hence $u^n\to u_t$ strongly in
$L^2(\partial\Omega\cup\Sigma^\pm_t)^d$ by compactness),
then for each $u\in V(\Omega_t)$ the following convergence holds
\begin{multline*}
\int_{\Omega\setminus\Sigma_t} \sigma(u^n)\cdot \epsilon(u) \,dx
+\int_{\Sigma_t} \bigl\{
\nabla\alpha_{\rm f}([\![u^n]\!]_{\tau_t})\cdot [\![u]\!]_{\tau_t}
+[\alpha_{\rm c}^\prime +\beta_\varepsilon] (\nu_t\cdot[\![u^n]\!])\,
(\nu_t\cdot[\![u]\!])\bigr\} \,dS_x\\
\to \int_{\Omega\setminus\Sigma_t} \sigma(u_t)\cdot \epsilon(u) \,dx
+\int_{\Sigma_t} \bigl\{
\nabla\alpha_{\rm f}([\![u_t]\!]_{\tau_t})\cdot [\![u]\!]_{\tau_t}
+[\alpha_{\rm c}^\prime +\beta_\varepsilon] (\nu_t\cdot[\![u_t]\!])\,
(\nu_t\cdot[\![u]\!])\bigr\} \,dS_x.
\end{multline*}
Therefore, applying a Galerkin approximation and the Brouwer fixed point theorem
(see \cite{Fra/94}), a solution to the variational problem \eqref{3.4} can be argued.
Its uniqueness under the monotony assumption 
(that is not $\alpha_{\rm c}^\prime$ in \eqref{2.6}), 
and the boundary value formulation \eqref{3.5} can be derived in a standard way.
\qed
\end{proof}

Next, for a given observation $z\in H^1(\partial\Omega)^d$, we consider
the $\varepsilon$-dependent least-squares misfit function from \eqref{1.4},
where $u^\varepsilon_t$ satisfies \eqref{3.4}:
\begin{equation}\label{3.8}
\mathcal{J}(u^\varepsilon_t; \Omega_t)
=\frac{1}{2} \int_{\Gamma^{\rm O}_t} |u^\varepsilon_t - z|^2 \,dS_x
+\rho |\Sigma_t|.
\end{equation}
From the fundamental theorem of calculus, we have the following representations
\begin{align}\label{3.9}
\nabla\alpha_{\rm f}([\![u]\!]_{\tau_t})
=\int_0^1 \nabla^2 \alpha_{\rm f} ([\![r u]\!]_{\tau_t})  \,
[\![u]\!]_{\tau_t} \,dr +\nabla\alpha_{\rm f} (0), \nonumber\\
[\alpha_{\rm c}^\prime +\beta_\varepsilon](\nu_t\cdot[\![u]\!])
=\int_0^1 [\alpha_{\rm c}^{\prime\prime} +\beta^\prime_\varepsilon]
(\nu_t\cdot[\![r u]\!]) \,(\nu_t\cdot[\![u]\!]) \,dr
+[\alpha_{\rm c}^\prime +\beta_\varepsilon](0)
\end{align}
for differentiable $\nabla\alpha_{\rm f}, \alpha_{\rm c}^\prime, \beta_\varepsilon$.
Let us fix a solution $u^\varepsilon_t$ to the variational equation \eqref{3.4}.
Based on \eqref{3.9} we introduce a quadratic Lagrangian (compare to
$\mathcal{L}^\varepsilon$ in \eqref{1.6}) linearized around $u^\varepsilon_t$
\begin{multline}\label{3.10}
\tilde{\mathcal{L}}^\varepsilon (0, u^\varepsilon_t, u, v; \Omega_t)
=\frac{1}{2} \int_{\Gamma^{\rm O}_t} |u - z|^2 \,dS_x +\rho |\Sigma_t|
-\int_{\Omega\setminus\Sigma_t} \sigma(u)\cdot \epsilon(v) \,dx
+\int_{\Gamma^{\rm N}_t} g\cdot v \,dS_x\\ -\int_{\Sigma_t} \Bigl\{
\Bigl( \int_0^1 \nabla^2 \alpha_{\rm f} ([\![r u^\varepsilon_t]\!]_{\tau_t})  \,
[\![u]\!]_{\tau_t} \,dr +\nabla\alpha_{\rm f}(0) \Bigr) \cdot [\![v]\!]_{\tau_t}\\
+\Bigl( \int_0^1 [\alpha_{\rm c}^{\prime\prime} +\beta^\prime_\varepsilon]
(\nu_t\cdot[\![r u^\varepsilon_t]\!]) \,(\nu_t\cdot[\![u]\!]) \,dr
+[\alpha_{\rm c}^\prime +\beta_\varepsilon](0) \Bigr) (\nu_t\cdot[\![v]\!]) \Bigr\} \,dS_x,
\end{multline}
and a saddle point problem corresponding  to \eqref{1.11}: for all $(u, v)\in V(\Omega_t)^2$,
\begin{equation}\label{3.11}
\tilde{\mathcal{L}}^\varepsilon (0, u^\varepsilon_t, u^\varepsilon_t, v;\Omega_t) \le
\tilde{\mathcal{L}}^\varepsilon (0, u^\varepsilon_t, u^\varepsilon_t, v^\varepsilon_t;\Omega_t)
\le\tilde{\mathcal{L}}^\varepsilon (0, u^\varepsilon_t, u, v^\varepsilon_t;\Omega_t).
\end{equation}
Then \eqref{3.8} can be expressed equivalently in the primal-dual form
\eqref{1.11} as
\begin{equation}\label{3.12}
\mathcal{J}(u^\varepsilon_t; \Omega_t) =\tilde{\mathcal{L}}^\varepsilon
(0, u^\varepsilon_t, u^\varepsilon_t, v^\varepsilon_t;\Omega_t),
\end{equation}
where according to \eqref{3.9} the optimal value of the Lagrangian at the solution is
\begin{multline}\label{3.13}
\tilde{\mathcal{L}}^\varepsilon (0, u^\varepsilon_t, u^\varepsilon_t, v^\varepsilon_t; \Omega_t)
=\frac{1}{2} \int_{\Gamma^{\rm O}_t} |u^\varepsilon_t - z|^2 \,dS_x +\rho |\Sigma_t|
-\int_{\Omega\setminus\Sigma_t} \sigma(u^\varepsilon_t)\cdot \epsilon(v^\varepsilon_t) \,dx
+\int_{\Gamma^{\rm N}_t} g\cdot v^\varepsilon_t \,dS_x\\ -\int_{\Sigma_t} \bigl\{
\nabla\alpha_{\rm f}([\![u^\varepsilon_t]\!]_{\tau_t}) \cdot [\![v^\varepsilon_t]\!]_{\tau_t}
+[\alpha_{\rm c}^\prime +\beta_\varepsilon](\nu_t\cdot[\![u^\varepsilon_t]\!])\,
(\nu_t\cdot[\![v^\varepsilon_t]\!]) \bigr\} \,dS_x.
\end{multline}

\begin{theorem}[Well-posedness of $\varepsilon$-regularized saddle-point problem]\label{theo3}
Assume that the cohesion is small in the sense that constant
$K_{\rm f 2}, K_{\rm c 2}$ in \eqref{2.3}, \eqref{2.5} are sufficiently small so that
\begin{equation}\label{3.14}
K_{\rm f c 2} := K_{\rm KP} -(K_{\rm f 2} +K_{\rm c 2}) 2 K_{\rm tr}^2 >0,
\end{equation}
where $K_{\rm KP}$, $K_{\rm tr}$ are from \eqref{2.15}, \eqref{2.17}.
Then there exists a unique saddle-point $(u^\varepsilon_t, v^\varepsilon_t)\in V(\Omega_t)^2$
in \eqref{3.11}.
Its primal component $u^\varepsilon_t$ solves \eqref{3.4}.
The dual component $v^\varepsilon_t$ is a solution to the adjoint equation
corresponding to fixed $u^\varepsilon_t$:
\begin{multline}\label{3.15}
\langle A_{\varepsilon}(u^\varepsilon_t) v, v^\varepsilon_t \rangle :=
\int_{\Omega\setminus\Sigma_t} \sigma(v)\cdot \epsilon(v^\varepsilon_t) \,dx
+\int_{\Sigma_t} \int_0^1 \Bigl\{ \Bigl( \nabla^2 \alpha_{\rm f}
([\![r u^\varepsilon_t]\!]_{\tau_t})  \, [\![v]\!]_{\tau_t} \Bigr)
\cdot [\![v^\varepsilon_t]\!]_{\tau_t}\\
+[\alpha_{\rm c}^{\prime\prime} +\beta^\prime_\varepsilon]
(\nu_t\cdot[\![r u^\varepsilon_t]\!]) \,(\nu_t\cdot[\![v]\!])
(\nu_t\cdot[\![v^\varepsilon_t]\!]) \Bigr\} \,dr \,dS_x
=\int_{\Gamma^{\rm O}_t} (u^\varepsilon_t -z)\cdot v \,dS_x
\end{multline}
for all test functions $v\in V(\Omega_t)$.
For smooth solutions the boundary value relations hold:
\begin{align}\label{3.16}
{\rm div}\, \sigma(v^\varepsilon_t) =0 &\text{ in } \Omega\setminus\Sigma_t,\nonumber\\
v^\varepsilon_t =0 \text{ on } \Gamma^{\rm D}_t,\quad
\sigma(v^\varepsilon_t)n =u^\varepsilon_t -z \text{ on } \Gamma^{\rm O}_t,
\quad \sigma(v^\varepsilon_t)n =0 &\text{ on }
\Gamma^{\rm N}_t\setminus \Gamma^{\rm O}_t,\nonumber\\
[\![\sigma(v^\varepsilon_t)\nu_t]\!] =0,\quad (\sigma(v^\varepsilon_t)\nu_t)_{\tau_t}
=\int_0^1 \nabla^2 \alpha_{\rm f} ([\![r u^\varepsilon_t]\!]_{\tau_t})
\, [\![v^\varepsilon_t]\!]_{\tau_t} \,dr,&\nonumber\\
\nu_t\cdot(\sigma(v^\varepsilon_t)\nu_t)
=\int_0^1 [\alpha_{\rm c}^{\prime\prime} +\beta^\prime_\varepsilon]
(\nu_t\cdot[\![r u^\varepsilon_t]\!]) \,(\nu_t\cdot[\![v^\varepsilon_t]\!]) \,dr
&\text{ on } \Sigma_t
\end{align}
implying linear, Robin-type boundary conditions at the interface.
\end{theorem}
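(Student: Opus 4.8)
The plan is to reduce the saddle-point problem \eqref{3.11} to a pair of uniquely solvable \emph{linear} variational problems, exploiting that $\tilde{\mathcal{L}}^\varepsilon(0,u^\varepsilon_t,\cdot,\cdot)$ from \eqref{3.10} is quadratic in its third slot and affine in its fourth. Using the fundamental-theorem-of-calculus identities \eqref{3.9}, I would first rewrite \eqref{3.4} as $a_\varepsilon(u^\varepsilon_t,v)+\ell_0(v)=\ell_g(v)$ for all $v\in V(\Omega_t)$, where $a_\varepsilon(\cdot,\cdot):=\langle A_\varepsilon(u^\varepsilon_t)\,\cdot,\cdot\rangle$ is the bilinear form of \eqref{3.15} (symmetric, since $C_{ijkl}=C_{klij}$ and $\nabla^2\alpha_{\rm f}$ is symmetric), $\ell_g(v)=\int_{\Gamma^{\rm N}_t}g\cdot v\,dS_x$, and $\ell_0(v)=\int_{\Sigma_t}\bigl\{\nabla\alpha_{\rm f}(0)\cdot[\![v]\!]_{\tau_t}+[\alpha_{\rm c}^\prime+\beta_\varepsilon](0)\,(\nu_t\cdot[\![v]\!])\bigr\}\,dS_x$. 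The key estimate is that, under assumption \eqref{3.14}, $a_\varepsilon$ is coercive on $V(\Omega_t)$ with a constant \emph{independent of} $\varepsilon$: discarding the non-negative penalty contribution $\beta^\prime_\varepsilon\ge0$ (Lemma~\ref{lem0}, \eqref{3.1}), bounding $|\nabla^2\alpha_{\rm f}|\le K_{\rm f 2}$, $|\alpha_{\rm c}^{\prime\prime}|\le K_{\rm c 2}$ by \eqref{2.3}, \eqref{2.5}, using $|[\![v]\!]_{\tau_t}|^2+(\nu_t\cdot[\![v]\!])^2=|[\![v]\!]|^2$ together with the trace bound $\|[\![v]\!]\|^2_{L^2(\Sigma_t)}\le 2K_{\rm tr}^2\|v\|_{H^1(\Omega\setminus\Sigma_t)^d}^2$ from \eqref{2.17}, and finally \eqref{2.15}, gives $a_\varepsilon(v,v)\ge K_{\rm f c 2}\|v\|_{H^1(\Omega\setminus\Sigma_t)^d}^2$. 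Boundedness of $a_\varepsilon$ for fixed $\varepsilon$ (its norm involving $K_{\beta 1}/\varepsilon$) and of $\ell_g,\ell_0$ and $v\mapsto\int_{\Gamma^{\rm O}_t}(u^\varepsilon_t-z)\cdot v\,dS_x$ on $V(\Omega_t)$ is routine via \eqref{2.17}.

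Existence of $u^\varepsilon_t$ is already provided by Theorem~\ref{theo2}; its uniqueness now follows from \eqref{3.14} alone, without any convexity assumption, since subtracting \eqref{3.4} for two solutions, testing with their difference, dropping the monotone term $\beta_\varepsilon$ with the correct sign (Lemma~\ref{lem0}), and repeating the estimate above yields $0\ge K_{\rm f c 2}\|u_1-u_2\|_{H^1(\Omega\setminus\Sigma_t)^d}^2$. With $u^\varepsilon_t$ fixed, the Lax--Milgram theorem applied to $a_\varepsilon$ then produces a unique $v^\varepsilon_t\in V(\Omega_t)$ solving the adjoint equation \eqref{3.15}. I would then check that $(u^\varepsilon_t,v^\varepsilon_t)$ solves \eqref{3.11}. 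Setting $u=u^\varepsilon_t$ in \eqref{3.10} and using \eqref{3.9}, the whole $v$-dependent part cancels with $\int_{\Gamma^{\rm N}_t}g\cdot v\,dS_x$ by the state equation \eqref{3.4}, so $\tilde{\mathcal{L}}^\varepsilon(0,u^\varepsilon_t,u^\varepsilon_t,v)=\tfrac{1}{2}\int_{\Gamma^{\rm O}_t}|u^\varepsilon_t-z|^2\,dS_x+\rho|\Sigma_t|=\mathcal{J}(u^\varepsilon_t;\Omega_t)$ does not depend on $v$; this yields the left inequality in \eqref{3.11} with equality, and incidentally proves \eqref{3.12}. For the right inequality, $u\mapsto\tilde{\mathcal{L}}^\varepsilon(0,u^\varepsilon_t,u,v^\varepsilon_t)$ is convex (its quadratic part is $\tfrac{1}{2}\|u\|_{L^2(\Gamma^{\rm O}_t)^d}^2\ge0$), and its G\^{a}teaux derivative at $u^\varepsilon_t$ in a direction $w$ equals $\int_{\Gamma^{\rm O}_t}(u^\varepsilon_t-z)\cdot w\,dS_x-a_\varepsilon(w,v^\varepsilon_t)$, which vanishes for every $w$ by \eqref{3.15}; hence $u^\varepsilon_t$ is a global minimizer.

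The delicate point is uniqueness of the saddle point, since $\tilde{\mathcal{L}}^\varepsilon$ is only affine (not strictly concave) in $v$ and its convexity in $u$ is degenerate — only the trace on $\Gamma^{\rm O}_t$ enters the quadratic term — so the classical strictly-convex/strictly-concave argument is not available. The resolution is that the two saddle inequalities separately pin down the two uniquely solvable linear equations: for any saddle point $(\bar u,\bar v)$ of \eqref{3.11}, affineness of $v\mapsto\tilde{\mathcal{L}}^\varepsilon(0,u^\varepsilon_t,\bar u,v)$ and its attaining a maximum at $\bar v$ force its linear part in $v$ to vanish identically, i.e. $a_\varepsilon(\bar u,v)+\ell_0(v)=\ell_g(v)$ for all $v$; this linear problem with the coercive form $a_\varepsilon$ has a unique solution, which by \eqref{3.9} is exactly $u^\varepsilon_t$, so $\bar u=u^\varepsilon_t$. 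The right inequality then says $u^\varepsilon_t$ minimizes $\tilde{\mathcal{L}}^\varepsilon(0,u^\varepsilon_t,\cdot,\bar v)$, whose first-order condition is precisely \eqref{3.15} for $\bar v$, so $\bar v=v^\varepsilon_t$ by Lax--Milgram. Finally, the boundary value relations \eqref{3.16} follow from \eqref{3.15} in the standard way (as for \eqref{3.5}): testing with $v\in C_0^\infty(\Omega^\pm_t)^d$ gives ${\rm div}\,\sigma(v^\varepsilon_t)=0$ in $\Omega^\pm_t$; integrating by parts and comparing boundary integrals — the right-hand side charges only $\Gamma^{\rm O}_t$ — yields $\sigma(v^\varepsilon_t)n=u^\varepsilon_t-z$ on $\Gamma^{\rm O}_t$ and $\sigma(v^\varepsilon_t)n=0$ on $\Gamma^{\rm N}_t\setminus\Gamma^{\rm O}_t$, while test functions continuous across $\Sigma_t$, respectively with prescribed jump, give $[\![\sigma(v^\varepsilon_t)\nu_t]\!]=0$ and the Robin-type tangential and normal relations on $\Sigma_t$. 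I expect the two genuine obstacles to be this uniqueness subtlety and the $\varepsilon$-uniform coercivity of $a_\varepsilon$, the latter resting entirely on the sign $\beta^\prime_\varepsilon\ge0$.
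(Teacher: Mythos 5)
Your proposal is correct and follows essentially the same route as the paper: reduce the saddle-point problem to the state equation (first inequality, affine in $v$) and the adjoint equation (second inequality, convex quadratic in $u$), establish the $\varepsilon$-uniform coercivity $\langle A_\varepsilon(u^\varepsilon_t)v,v\rangle\ge K_{\rm fc2}\|v\|^2_{H^1(\Omega\setminus\Sigma_t)^d}$ from \eqref{2.15}, \eqref{2.17} and $\beta^\prime_\varepsilon\ge0$, and conclude by Lax--Milgram, with the boundary relations \eqref{3.16} obtained by integration by parts. Your treatment of the uniqueness of the saddle point and of $u^\varepsilon_t$ (via the monotonicity of $\beta_\varepsilon$ and the FTC bound on $\nabla\alpha_{\rm f},\alpha_{\rm c}^\prime$) is a more explicit rendering of what the paper compresses into its final remark about rewriting \eqref{3.4} through $A_\varepsilon(u^\varepsilon_t)$.
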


\begin{proof}
The saddle-point problem consists of two sub-problems:
the former and the latter inequalities in \eqref{3.11}.
Since the Lagrangian $\tilde{\mathcal{L}}^\varepsilon$ from \eqref{3.10} is linear in $v$,
the primal maximization problem (the former inequality in \eqref{3.11})
is equivalent to the first order optimality condition \eqref{3.4}.
Its solvability is proven in Theorem~\ref{theo2}.
Since $\tilde{\mathcal{L}}^\varepsilon$ from \eqref{3.10}
is quadratic and convex in $u$,  the dual minimization problem
(the latter inequality in \eqref{3.11}) is the optimality condition
expressed by the adjoint equation \eqref{3.15}.

Now we prove the solution existence for \eqref{3.15}.
For fixed $u^\varepsilon_t$, the left-hand side of \eqref{3.15} forms
a linear  continuous operator
$A_{\varepsilon}(u^\varepsilon_t): V(\Omega_t)\mapsto V^\star(\Omega_t)$.
Indeed, using the Cauchy--Schwarz inequality and the upper bounds for
$\nabla^2 \alpha_{\rm f}$, $\alpha_{\rm c}^{\prime\prime}$, $\beta^\prime_\varepsilon$
in \eqref{2.3}, \eqref{2.5}, \eqref{3.1}, the operator is bounded from above,
hence continuous.
Recalling the symmetry of the elasticity coefficients $C$ and
the Hessian matrix $\nabla^2 \alpha_{\rm f}$, the operator is self-adjoint.
Applying the Cauchy--Schwarz, Korn--Poincare \eqref{2.15} and trace inequalities
\eqref{2.17}, due to the boundedness of $\nabla^2 \alpha_{\rm f}$,
$\alpha_{\rm c}^{\prime\prime}$, $\beta^\prime_\varepsilon\ge0$ in \eqref{2.3},
\eqref{2.5}, \eqref{3.1}, similarly to \eqref{3.6}, we estimate uniformly from below
\begin{multline}\label{3.17}
\langle A_{\varepsilon}(u^\varepsilon_t) u, u \rangle
\ge K_{\rm KP} \|u\|^2_{H^1(\Omega\setminus\Sigma_t)^d}
-\int_{\Sigma_t} \bigl\{ K_{\rm f 2} \bigl| [\![u]\!]_{\tau_t} \bigr|^2
+K_{\rm c 2} \bigl| \nu_t\cdot[\![u]\!] \bigr|^2  \bigr\} \,dS_x\\
\ge K_{\rm f c 2} \|u\|^2_{H^1(\Omega\setminus\Sigma_t)^d}.
\end{multline}
Here $K_{\rm f c 2} >0$ due to assumption \eqref{3.14}.
In this case, $A_{\varepsilon}(u^\varepsilon_t)$ is uniformly positive.
Because $\nabla^2\alpha_{\rm f}$ and $[\alpha_{\rm c}^{\prime\prime}
+\beta_\varepsilon^\prime]$ are assumed uniformly continuous,
they preserve $L^2$-convergence, and the operator 
$A_{\varepsilon}(u^\varepsilon_t)$ is weakly lower semi-continuous by 
the compactness similar to arguments presented in the proof of Theorem~\ref{theo2}.
According to the Lax--Milgram theorem, the variational equation \eqref{3.15}
has a unique solution.
We derive straightforwardly its boundary value formulation \eqref{3.16}.

Since the variational equation \eqref{3.4} can be rewritten in the equivalent form
\begin{equation*}
\langle A_{\varepsilon}(u^\varepsilon_t) u^\varepsilon_t, u \rangle
+\int_{\Sigma_t} \bigl( \nabla \alpha_{\rm f}(0)\cdot [\![u]\!]_{\tau_t}
+[\alpha_{\rm c}^\prime +\beta_\varepsilon](0) (\nu_t\cdot[\![u]\!]) \bigr) \,dS_x
=\int_{\Gamma^{\rm N}_t} g\cdot u \,dS_x
\end{equation*}
for all $u\in V(\Omega_t)$,
by assumption \eqref{3.14} its solution $u^\varepsilon_t$ is unique, too.
\qed
\end{proof}

\section{Shape derivative}\label{sec4}

Let us fix a  flow and its inverse
\begin{equation}\label{4.1}
s\mapsto (\phi_s, \phi^{-1}_s)\in C^{1}([t_0-t_1, t_1-t_0];
W^{1,\infty}(\overline{\Omega})^d)^2.
\end{equation}
This defines an associated coordinate transformation  $y= \phi_s(x)$ and its inverse  $x= \phi^{-1}_s(y)$.
For every fixed $t\in(t_0,t_1)$, we suppose that for $s\in[t_0, t_1] -t$ it forms a diffeomorphism
\begin{equation}\label{4.2}
\phi_s:\Omega_{t}\mapsto \Omega_{t+s},\; x\mapsto y,\quad
\phi^{-1}_s:\Omega_{t+s}\mapsto \Omega_{t},\; y\mapsto x,
\end{equation}
where the perturbed geometry $\Omega_{t+s} =(\Gamma^{\rm D}_{t+s},
\Gamma^{\rm N}_{t+s}, \Gamma^{\rm O}_{t+s}, \Sigma_{t+s})$ 
describes the broken domain $\Omega\setminus\Sigma_{t+s}$. 
From \eqref{4.1}, a time-dependent kinematic velocity $\Lambda(t, x)\in
C([t_0, t_1];W^{1,\infty}(\overline{\Omega})^d)$ is assumed
defined by the formula
\begin{equation}\label{4.3}
\Lambda(t+s,y) :={\textstyle\frac{d}{ds}}\phi_s(\phi^{-1}_s(y)).
\end{equation}
If a stationary velocity is given explicitly by
$\Lambda(x)\in W^{1,\infty}(\overline{\Omega})^d$
with $n\cdot\Lambda =0$ at $\partial\Omega$, thus preserving the hold-all domain,
then $\Lambda$ determines the flow \eqref{4.1} by unique solutions
to the autonomous ODE systems:
\begin{equation}\label{4.4}
\left\{ \begin{array}{rl}
{\textstyle\frac{d}{ds}}\phi_s =\Lambda(\phi_s)
&\;\text{for $s\not=0$},\\
\phi_s=x&\;\text{for $s=0$},
\end{array}\right.\quad
\left\{ \begin{array}{rl}
{\textstyle\frac{d}{ds}}\phi^{-1}_s =-\Lambda(\phi^{-1}_s)
&\;\text{for $s\not=0$},\\
\phi^{-1}_s =y&\;\text{for $s=0$},
\end{array}\right.
\end{equation}
which build a semi-group of transformations.

The following properties (T1)--(T4) are needed to prove shape differentiability.

\begin{description}
\item[(T1)]
We assume that
the map $u\mapsto u\circ\phi_s$ is bijective between the function spaces
\begin{equation}\label{4.5}
V(\Omega_{t+s})\mapsto V(\Omega_{t}).
\end{equation}
\end{description}

Based on assumption \eqref{4.5}, the perturbed objective
$(t_0-t, t_1-t)\times V(\Omega_{t})$,
$(s, \tilde{u})\mapsto \tilde{\mathcal{J}}$
and Lagrangian  $(t_0-t, t_1-t)\times V(\Omega_{t})^2$,
$(s, \tilde{u}, \tilde{v})\mapsto \tilde{\mathcal{L}}^\varepsilon$,
are well-defined  for $(u, v)\in V(\Omega_{t+s})^2$ when
transformed to the reference geometry $\Omega_{t}$ by setting
\begin{equation}\label{4.6}
\tilde{\mathcal{J}}(s, u\circ \phi_s; \Omega_{t}) =\mathcal{J}(u; \Omega_{t+s}),
\quad \tilde{\mathcal{L}}^\varepsilon(s, u\circ \phi_s, u\circ \phi_s, v\circ \phi_s; \Omega_{t})
=\mathcal{L}^\varepsilon(u, v; \Omega_{t+s}).
\end{equation}
At $s=0$ relations \eqref{4.6} imply that for $(\tilde{u}, \tilde{v})\in V(\Omega_{t})^2$
\begin{equation}\label{4.7}
\tilde{\mathcal{J}} (0, \tilde{u}; \Omega_{t}) =\mathcal{J}(\tilde{u}; \Omega_{t}),
\quad \tilde{\mathcal{L}}^\varepsilon (0, \tilde{u}, \tilde{u}, \tilde{v}; \Omega_{t})
=\mathcal{L}^\varepsilon (\tilde{u}, \tilde{v}; \Omega_{t}).
\end{equation}
According to \eqref{1.11} we look for a saddle-point
$(\tilde{u}^\varepsilon_{t+s}, \tilde{v}^\varepsilon_{t+s}) \in V(\Omega_{t})^2$
satisfying the inequalities
\begin{equation}\label{4.8}
\tilde{\mathcal{L}}^\varepsilon(s, u^\varepsilon_t, \tilde{u}^\varepsilon_{t+s}, \tilde{v};
\Omega_{t}) \le \tilde{\mathcal{L}}^\varepsilon(s, u^\varepsilon_t, \tilde{u}^\varepsilon_{t+s},
\tilde{v}^\varepsilon_{t+s}; \Omega_{t}) \le \tilde{\mathcal{L}}^\varepsilon
(s, u^\varepsilon_t, \tilde{u}, \tilde{v}^\varepsilon_{t+s}; \Omega_{t})
\end{equation}
for all $(\tilde{u}, \tilde{v})\in V(\Omega_{t})^2$.
In the case of $\mathcal{J}$ from \eqref{3.8}, applying the coordinate transformation
\eqref{4.2} we derive explicitly the objective function
\begin{equation}\label{4.9}
\tilde{\mathcal{J}}(s, \tilde{u}; \Omega_{t}) =\frac{1}{2} \int_{\Gamma^{\rm O}_{t}}
|\tilde{u} -z\circ \phi_s|^2 \,\omega^{\rm b}_s dS_x +\rho \int_{\Sigma_{t}} \omega^{\rm b}_s \,dS_x,
\end{equation}
where 
\begin{equation}\label{4.13}
\omega^{\rm d}_s:=\det(\nabla \phi_s)\text{ in }\Omega\setminus\Sigma_{t},
\quad \omega^{\rm b}_s :=|(\nabla\phi_s^{-\top}\circ \phi_s)
n^\pm_{t}| \omega^{\rm d}_s\text{ at }\partial\Omega^\pm_t
\end{equation}
denote the Jacobians, and set the perturbed Lagrangian according to \eqref{3.10} as
\begin{multline}\label{4.10}
\tilde{\mathcal{L}}^\varepsilon(s, u^\varepsilon_t, \tilde{u}, \tilde{v}; \Omega_{t})
=\tilde{\mathcal{J}}(s, \tilde{u}; \Omega_{t})\\
-\int_{\Omega\setminus\Sigma_{t}} \bigl( (C\circ \phi_s) E(\nabla\phi_s^{-1}\circ \phi_s,
\tilde{u})\cdot E(\nabla\phi_s^{-1}\circ \phi_s, \tilde{v}) \bigr) \,\omega^{\rm d}_s dx
+\int_{\Gamma^{\rm N}_{t}} (g\circ \phi_s)\cdot \tilde{v} \,\omega^{\rm b}_s dS_x\\
-\int_{\Sigma_t} \Bigl\{ \Bigl( \int_0^1 \nabla^2\alpha_{\rm f}
([\![r u^\varepsilon_t]\!]_{\tau_t}) [\![\tilde{u}]\!]_{\tilde{\tau}_{t+s}} \,dr
+\nabla\alpha_{\rm f}(0) \Bigr) \cdot [\![\tilde{v}]\!]_{\tilde{\tau}_{t+s}}\\
+\Bigl( \int_0^1 [\alpha_{\rm c}^\prime +\beta_\varepsilon]
(\nu_t\cdot [\![r u^\varepsilon_t]\!]) (\tilde{\nu}_{t+s} \cdot[\![\tilde{u}]\!]) \,dr
+[\alpha_{\rm c}^\prime +\beta_\varepsilon](0) \Bigr)
(\tilde{\nu}_{t+s} \cdot[\![\tilde{v}]\!]) \Bigr\} \,\omega^{\rm b}_s dS_x.
\end{multline}
In \eqref{4.10}, the following decomposition at $\Sigma_t$ was used
in accordance with \eqref{2.1}:
\begin{equation}\label{4.11}
[\![\tilde{u}]\!]_{\tilde{\tau}_{t+s}} :=[\![\tilde{u}]\!]
-(\tilde{\nu}_{t+s} \cdot[\![\tilde{u}]\!])\, \tilde{\nu}_{t+s},
\quad \tilde{\nu}_{t+s} :=\nu_{t+s}\circ \phi_s.
\end{equation}
Further in view of the chain rule $\nabla_y u =(\nabla\phi_s^{-T}\circ \phi_s)
\nabla (u\circ\phi_s)$, there appears the expression
\begin{equation}\label{4.12}
E(M, \tilde{u}) :=\frac{1}{2} (M^\top \nabla \tilde{u} +\nabla \tilde{u}^\top M),
\quad M\in \mathbb{R}^{d\times d},
\end{equation}
for which $E(I,\tilde{u}) =\epsilon(\tilde{u})$ according to \eqref{2.7}. 
For more details of the derivation, see \cite{Kov/06,KK/07,KO/20}.

\begin{lemma}[T2]\label{lem2}
The asymptotic expansion in the first argument of $\tilde{\mathcal{J}}$
from \eqref{4.9} is given by
\begin{equation}\label{4.14}
\tilde{\mathcal{J}}(s, \tilde{u}; \Omega_{t})
=\mathcal{J}(\tilde{u}; \Omega_{t}) +{\rm O}(|s|),
\end{equation}
and the expansion of $\tilde{\mathcal{L}}^\varepsilon$ from \eqref{4.10} by:
\begin{equation}\label{4.15}
\tilde{\mathcal{L}}^\varepsilon(s, u^\varepsilon_t, \tilde{u}, \tilde{v}; \Omega_{t})
=\tilde{\mathcal{L}}^\varepsilon(0, u^\varepsilon_t, \tilde{u}, \tilde{v}; \Omega_{t})
+s {\textstyle\frac{\partial}{\partial s}} \tilde{\mathcal{L}}^\varepsilon
(0, u^\varepsilon_t, \tilde{u}, \tilde{v}; \Omega_{t}) +{\rm o}(|s|)
\end{equation}
holds as $s\to0$.
The partial derivative $(t_0,t_1)-t\mapsto\mathbb{R}, \tau\mapsto
{\textstyle\frac{\partial}{\partial s}} \tilde{\mathcal{L}}^\varepsilon$
in \eqref{4.15} is a continuous function and exhibits the explicit representation
\begin{multline}\label{4.16}
{\textstyle\frac{\partial}{\partial s}} \tilde{\mathcal{L}}^\varepsilon
(\tau, u^\varepsilon_t, \tilde{u}, \tilde{v}; \Omega_{t})
=\int_{\Gamma^{\rm O}_{t}} \Bigl( \frac{1}{2}
{\rm div}_{\tau_t} \Lambda|_{t+\tau} |\tilde{u} -z|^2
-\nabla z \Lambda|_{t+\tau}\cdot (\tilde{u} -z) \Bigr) dS_x
+\rho \int_{\Sigma_{t}} {\rm div}_{\tau_t} \Lambda|_{t+\tau} dS_x\\
-\int_{\Omega\setminus\Sigma_{t}} \Bigl( \bigl( {\rm div} \Lambda|_{t+\tau} C
+\nabla C \Lambda|_{t+\tau} \bigr) \epsilon(\tilde{u})\cdot \epsilon(\tilde{v})
-\sigma(\tilde{u})\cdot E(\nabla \Lambda|_{t+\tau}, \tilde{v})
-\sigma(\tilde{v})\cdot E(\nabla \Lambda|_{t+\tau}, \tilde{u}) \Bigr) dx\\
+\int_{\Gamma^{\rm N}_{t}} \bigl( {\rm div}_{\tau_t} \Lambda|_{t+\tau} g
+\nabla g\, \Lambda|_{t+\tau} \bigr)\cdot \tilde{v} \,dS_x
-\int_{\Sigma_t} \Bigl\{
\int_0^1 \bigl( \nabla^2 \alpha_{\rm f}([\![r u^\varepsilon_t]\!]_{\tau_t})\,
[\![\tilde{u}]\!]_{\nabla \tau_t \Lambda|_{t+\tau}} \bigr)\cdot [\![\tilde{v}]\!]_{\tau_t} dr\\
+\Bigl( \int_0^1 \nabla^2 \alpha_{\rm f}
([\![r u^\varepsilon_t]\!]_{\tau_t})\, [\![\tilde{u}]\!]_{\tau_t} dr
+\nabla \alpha_{\rm f}(0) \Bigr)
\cdot \bigl( {\rm div}_{\tau_t} \Lambda|_{t+\tau} [\![\tilde{v}]\!]_{\tau_t}
+[\![\tilde{v}]\!]_{\nabla \tau_t \Lambda|_{t+\tau}} \bigr)\\
+\Bigl( \int_0^1 [\alpha_{\rm c}^{\prime\prime} +\beta^\prime_\varepsilon]
(\nu_t \cdot[\![r u^\varepsilon_t]\!])\, (\nu_t\cdot [\![\tilde{u}]\!]) \,dr
+[\alpha_{\rm c}^\prime +\beta_\varepsilon](0) \Bigr)
\bigl( ({\rm div}_{\tau_t} \Lambda|_{t+\tau} \nu_t
+\nabla \nu_t \Lambda|_{t+\tau})\cdot [\![\tilde{v}]\!] \bigr)\\
+\int_0^1 [\alpha_{\rm c}^{\prime\prime} +\beta^\prime_\varepsilon]
(\nu_t \cdot[\![r u^\varepsilon_t]\!])\, \bigl( \nabla\nu_t \Lambda|_{t+\tau}\cdot
[\![\tilde{u}]\!] \bigr) (\nu_t \cdot[\![\tilde{v}]\!]) \,dr \Bigr\} \,dS_x.
\end{multline}
In \eqref{4.16} the notation $\nabla \tau_t \Lambda$
and $\nabla \nu_t \Lambda$ at $\Sigma_t$ stands for
\begin{equation}\label{4.17}
[\![\tilde{u}]\!]_{\nabla \tau_t \Lambda}
:=-(\nu_t\cdot [\![\tilde{u}]\!]) \nabla \nu_t \Lambda
-(\nabla \nu_t \Lambda \cdot [\![\tilde{u}]\!]) \nu_t,\quad
\nabla \nu_t \Lambda :=\bigl( (\nabla \Lambda\, \nu_{t})\cdot
\nu_{t} \bigr) \nu_{t} -\nabla \Lambda^\top \nu_{t},
\end{equation}
and the tangential divergence is defined as
\begin{equation}\label{4.18}
{\rm div}_{\tau_t} \Lambda ={\rm div} \Lambda -(\nabla \Lambda\,
n^\pm_{t})\cdot n^\pm_{t}\text{ at }\partial\Omega^\pm_t.
\end{equation}
\end{lemma}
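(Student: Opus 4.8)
The plan is to carry out a term-by-term material (shape) differentiation of the transported functionals \eqref{4.9} and \eqref{4.10} as $s\to0$, relying on the standard calculus of the flow $\phi_s$ and, crucially, on the structural feature of the linearized Lagrangian: in \eqref{4.10} the primal state $u^\varepsilon_t$ and the reference interface directions $\tau_t,\nu_t$ appearing inside the coefficients $\nabla^2\alpha_{\rm f}([\![r u^\varepsilon_t]\!]_{\tau_t})$ and $[\alpha_{\rm c}^{\prime\prime}+\beta^\prime_\varepsilon](\nu_t\cdot[\![r u^\varepsilon_t]\!])$ are \emph{frozen}, so the only $s$-dependence sits in the pulled-back data $C\circ\phi_s,\ g\circ\phi_s,\ z\circ\phi_s$, in the Jacobians $\omega^{\rm d}_s,\omega^{\rm b}_s$ from \eqref{4.13}, in the deformation-gradient factor inside $E(\nabla\phi_s^{-1}\circ\phi_s,\cdot)$ of \eqref{4.12}, and (using (T1) and the geometry-preservation assumption of Section~\ref{sec4}) in the transported normal $\tilde\nu_{t+s}$ and the associated tangential decomposition \eqref{4.11}. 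First I would record the first-order expansions of these coefficients, which are classical (see \cite[Chapter~4]{DZ/11}, \cite[Chapter~2]{SZ/92}): from \eqref{4.1}, $\nabla\phi_s=I+s\nabla\Lambda|_t+{\rm o}(s)$ and $\nabla\phi_s^{-1}\circ\phi_s=I-s\nabla\Lambda|_t+{\rm o}(s)$ in $W^{1,\infty}$; $\omega^{\rm d}_s=1+s\,{\rm div}\,\Lambda|_t+{\rm o}(s)$; $\omega^{\rm b}_s=1+s\,{\rm div}_{\tau_t}\Lambda|_t+{\rm o}(s)$ with ${\rm div}_{\tau_t}$ as in \eqref{4.18}; $C\circ\phi_s=C+s\,\nabla C\,\Lambda|_t+{\rm o}(s)$, $g\circ\phi_s=g+s\,\nabla g\,\Lambda|_t+{\rm o}(s)$, $z\circ\phi_s=z+s\,\nabla z\,\Lambda|_t+{\rm o}(s)$; and $\tilde\nu_{t+s}=\nu_t+s\,\nabla\nu_t\Lambda|_t+{\rm o}(s)$, where $\nabla\nu_t\Lambda$ is exactly the expression \eqref{4.17}, obtained from the standard material-derivative formula for the unit normal together with the constraint $|\nu_{t+s}|=1$.

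Second, I would substitute these expansions into \eqref{4.9} and \eqref{4.10} and collect powers of $s$. The $s^0$ terms reassemble, by \eqref{4.7} and \eqref{3.10}, into $\mathcal{J}(\tilde u;\Omega_t)$ and $\tilde{\mathcal{L}}^\varepsilon(0,u^\varepsilon_t,\tilde u,\tilde v;\Omega_t)$ respectively, which in particular yields \eqref{4.14}. For the $s^1$ coefficient: in $\tilde{\mathcal{J}}$, the Leibniz rule applied to $|\tilde u-z\circ\phi_s|^2\,\omega^{\rm b}_s$ and to $\rho\,\omega^{\rm b}_s$ produces the first line of \eqref{4.16}. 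In the bulk term of $\tilde{\mathcal{L}}^\varepsilon$ one uses $E(\nabla\phi_s^{-1}\circ\phi_s,\tilde u)=\epsilon(\tilde u)-s\,E(\nabla\Lambda|_t,\tilde u)+{\rm o}(s)$ from \eqref{4.12}, multiplies out with the expansions of $C\circ\phi_s$ and $\omega^{\rm d}_s$, and invokes the major symmetry $C_{ijkl}=C_{klij}$ to bring the two cross contributions into the form $\sigma(\tilde u)\cdot E(\nabla\Lambda|_t,\tilde v)+\sigma(\tilde v)\cdot E(\nabla\Lambda|_t,\tilde u)$, giving the second line; the Neumann term gives the $({\rm div}_{\tau_t}\Lambda\,g+\nabla g\,\Lambda)\cdot\tilde v$ contribution. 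On $\Sigma_t$ one differentiates the products in the last integral of \eqref{4.10}: using $\partial_s[\![\tilde u]\!]_{\tilde\tau_{t+s}}|_{s=0}=[\![\tilde u]\!]_{\nabla\tau_t\Lambda}$ (which is precisely \eqref{4.17}, since $[\![\tilde u]\!]$ is $s$-independent), together with $\partial_s\tilde\nu_{t+s}|_{s=0}=\nabla\nu_t\Lambda$ and $\partial_s\omega^{\rm b}_s|_{s=0}={\rm div}_{\tau_t}\Lambda$, a careful Leibniz expansion over each factor, keeping the frozen coefficients fixed, reorganizes into exactly the four remaining interface groups of \eqref{4.16}.

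Third, for the regularity claims: by \eqref{4.1} the maps $s\mapsto\phi_s,\phi^{-1}_s$ are $C^1$ with values in $W^{1,\infty}(\overline{\Omega})^d$, hence all the coefficients above are $C^1$ in $s$ with values in $L^\infty(\Omega)$, respectively $L^\infty(\partial\Omega)$, uniformly in $x$; the integrands in \eqref{4.9}--\eqref{4.10} depend smoothly on these coefficients away from degeneracies of the diffeomorphism, the remaining factors $\tilde u,\tilde v\in V(\Omega_t)$, $z,g\in H^1(\partial\Omega)^d$, $C\in W^{1,\infty}(\Omega)$ and the fixed $u^\varepsilon_t$ carry the stated Sobolev regularity, and the nonlinear factors $\nabla^2\alpha_{\rm f},\ \alpha_{\rm c}^{\prime\prime},\ \beta^\prime_\varepsilon$ are bounded by \eqref{2.3}, \eqref{2.5}, \eqref{3.1} and $s$-independent, so differentiation under the integral sign is legitimate by dominated convergence. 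This shows $s\mapsto\tilde{\mathcal{L}}^\varepsilon(s,u^\varepsilon_t,\tilde u,\tilde v;\Omega_t)$ is $C^1$, and continuity of $\tau\mapsto\partial_s\tilde{\mathcal{L}}^\varepsilon(\tau,u^\varepsilon_t,\tilde u,\tilde v;\Omega_t)$ follows because in \eqref{4.16} the $\tau$-dependence enters only through $\Lambda|_{t+\tau}$ and $\nabla\Lambda|_{t+\tau}$, which are continuous in $\tau$ by \eqref{4.1} and \eqref{4.3}. Finally, \eqref{4.15} follows from the fundamental theorem of calculus, $\tilde{\mathcal{L}}^\varepsilon(s,\cdot)-\tilde{\mathcal{L}}^\varepsilon(0,\cdot)-s\,\partial_s\tilde{\mathcal{L}}^\varepsilon(0,\cdot)=\int_0^s\bigl(\partial_\sigma\tilde{\mathcal{L}}^\varepsilon(\sigma,\cdot)-\partial_\sigma\tilde{\mathcal{L}}^\varepsilon(0,\cdot)\bigr)\,d\sigma={\rm o}(|s|)$.

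The step I expect to be the main obstacle is the interface bookkeeping: getting the material derivative of the unit normal \eqref{4.17} and of the surface Jacobian, that is, the tangential-divergence identity \eqref{4.18}, right, and then applying the Leibniz rule to the tangential projection \eqref{4.11} — which occurs in \eqref{4.10} simultaneously through $[\![\tilde u]\!]_{\tilde\tau_{t+s}}$, $[\![\tilde v]\!]_{\tilde\tau_{t+s}}$ and implicitly through $\tilde\nu_{t+s}\cdot[\![\tilde u]\!]$ — so that the numerous cross terms collapse precisely onto the four interface groups of \eqref{4.16}. The feature that keeps this a pure shape-calculus computation, rather than one needing a new a priori estimate, is the freezing of the nonlinear coefficients at the reference configuration by the linearization \eqref{3.10}: no $s$-derivative of $\alpha_{\rm f},\alpha_{\rm c},\beta_\varepsilon$ at a moving argument is ever required.
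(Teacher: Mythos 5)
Your proposal is correct and follows essentially the same route as the paper's proof in Appendix~\ref{A}: record the standard first-order expansions of the pulled-back data, Jacobians, deformation gradient and transported normal (the paper's \eqref{A1}), insert them into \eqref{4.9}--\eqref{4.10}, collect the $s^1$ coefficient to obtain \eqref{4.16}, and deduce continuity in $\tau$ from the continuity of $\Lambda|_{t+\tau}$ and $\nabla\Lambda|_{t+\tau}$. Your additional remarks — the use of the major symmetry $C_{ijkl}=C_{klij}$ for the bulk cross terms, the identification $\partial_s[\![\tilde u]\!]_{\tilde\tau_{t+s}}|_{s=0}=[\![\tilde u]\!]_{\nabla\tau_t\Lambda}$ via \eqref{4.11} and \eqref{4.17}, and the observation that the frozen nonlinear coefficients make this a pure shape-calculus computation — are all consistent with, and make explicit, what the paper leaves implicit.
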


The proof of Lemma~\ref{lem2} is presented in Appendix~\ref{A}.

\begin{lemma}[T3]\label{lem1}
The set of saddle points $(\tilde{u}^\varepsilon_{t+s}, \tilde{v}^\varepsilon_{t+s})$
for \eqref{4.8} is a singleton for all $s\in[t_0, t_1] -t$, and $(\tilde{u}^\varepsilon_t,
\tilde{v}^\varepsilon_t) =(u^\varepsilon_t, v^\varepsilon_t)$ as $s=0$.
\end{lemma}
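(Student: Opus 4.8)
\textbf{Proof proposal for Lemma~\ref{lem1}.}

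The plan is to reduce the claim for general $s$ to an application of Theorem~\ref{theo3}. The key observation is that, for each fixed $s\in[t_0,t_1]-t$, the transformed functional $\tilde{u}\mapsto\tilde{\mathcal{L}}^\varepsilon(s,u^\varepsilon_t,\tilde{u},\tilde{v};\Omega_t)$ from \eqref{4.10} has exactly the same structural form as $\tilde{\mathcal{L}}^\varepsilon(0,\cdot)$ in \eqref{3.10}: it is affine in $\tilde{v}$ and quadratic in $\tilde{u}$, with the bilinear form now built from the transported coefficients $(C\circ\phi_s)$, the transported strain operator $E(\nabla\phi_s^{-1}\circ\phi_s,\cdot)$, the Jacobian weights $\omega^{\rm d}_s,\omega^{\rm b}_s$, and the transported normal $\tilde{\nu}_{t+s}$. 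First I would record that, by the smoothness \eqref{4.1} of the flow and compactness of $\overline{\Omega}$, all these transported data are uniformly bounded and bounded away from degeneracy on the compact $s$-interval; in particular $\omega^{\rm d}_s\ge c_0>0$, the matrix $\nabla\phi_s^{-1}\circ\phi_s$ is uniformly invertible, and hence the transported elasticity form inherits a Korn--Poincaré-type coercivity on $V(\Omega_t)$ with a constant $K_{\rm KP}(s)$ that is uniformly positive, while the transported trace constant $K_{\rm tr}(s)$ stays uniformly bounded.

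The heart of the argument is then the coercivity estimate analogous to \eqref{3.17}. For the quadratic part $\tilde{u}\mapsto\tilde{\mathcal{L}}^\varepsilon(s,u^\varepsilon_t,\tilde{u},\tilde{u};\Omega_t)$ I would apply Cauchy--Schwarz to the surface terms, using the bounds $|\nabla^2\alpha_{\rm f}|\le K_{\rm f2}$ and $|\alpha_{\rm c}^{\prime\prime}+\beta'_\varepsilon|\ge0$ bounded above (note $\beta'_\varepsilon\ge0$ contributes with the favourable sign, and $\alpha_{\rm c}^{\prime\prime}$ is controlled by $K_{\rm c2}$) together with the transported trace inequality, to obtain a lower bound of the form $K_{\rm KP}(s)\|\tilde{u}\|_{H^1}^2-(K_{\rm f2}+K_{\rm c2})2K_{\rm tr}(s)^2\|\tilde{u}\|_{H^1}^2$. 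The point is that this is strictly positive provided the smallness condition \eqref{3.14} is strengthened to hold uniformly in $s$; since $K_{\rm KP}(s)\to K_{\rm KP}$ and $K_{\rm tr}(s)\to K_{\rm tr}$ as $s\to0$, one may shrink the admissible $t$-interval (equivalently the range of $s$) so that $K_{\rm fc2}(s):=K_{\rm KP}(s)-(K_{\rm f2}+K_{\rm c2})2K_{\rm tr}(s)^2>0$ throughout. Under this uniform coercivity, the associated linear operator $A_\varepsilon^{(s)}(u^\varepsilon_t)$ is self-adjoint (symmetry of $C$ and of $\nabla^2\alpha_{\rm f}$ is preserved under transport), continuous, and uniformly positive, so Lax--Milgram yields a unique minimiser $\tilde{u}^\varepsilon_{t+s}$ of the dual problem; affinity in $\tilde{v}$ gives, via the first-order condition, a unique $\tilde{v}^\varepsilon_{t+s}$ solving the transported adjoint equation. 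A standard saddle-point lemma (convexity in $\tilde{u}$, concavity — here affinity — in $\tilde{v}$, plus coercivity) then shows this pair is the unique saddle point of \eqref{4.8}. Finally, at $s=0$ relation \eqref{4.7} identifies $\tilde{\mathcal{L}}^\varepsilon(0,u^\varepsilon_t,\cdot,\cdot;\Omega_t)$ with $\mathcal{L}^\varepsilon(\cdot,\cdot;\Omega_t)$, so by the uniqueness already established in Theorem~\ref{theo3} the $s=0$ saddle point must coincide with $(u^\varepsilon_t,v^\varepsilon_t)$.

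I expect the main obstacle to be the uniform-in-$s$ coercivity: one must verify that transporting the elasticity bilinear form by $\phi_s$ does not destroy the delicate balance in \eqref{3.14}, which requires tracking how $K_{\rm KP}$ and $K_{\rm tr}$ deform under the diffeomorphism and invoking continuity of these constants in $s$ near $0$ to pass from the $s=0$ hypothesis \eqref{3.14} to its perturbed version on a (possibly shrunk) parameter interval. A secondary technical point is that the tangential decomposition in \eqref{4.10} uses the transported normal $\tilde{\nu}_{t+s}$, which is only $L^\infty$ in general; one must check that the surface quadratic form is still well-defined and bounded, which follows from $\tilde{\nu}_{t+s}\in L^\infty(\Sigma_t)^d$ with $|\tilde{\nu}_{t+s}|=1$ and the $H^{1/2}$-trace bound on $[\![\tilde{u}]\!]$, so no extra regularity of the interface is needed beyond what is already assumed.
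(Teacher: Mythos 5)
Your proposal is correct and follows essentially the same route as the paper's Appendix~\ref{D}: both arguments are perturbative in $s$, reducing unique solvability of the transported state and adjoint equations to the coercivity already secured by \eqref{3.14} at $s=0$ — the paper packages the $s$-dependence as the fixed operator $A_\varepsilon(u^\varepsilon_t)$ plus an ${\rm O}(s)$ bounded bilinear residual via the expansion \eqref{4.15} and the mean value theorem, whereas you track the transported constants $K_{\rm KP}(s)$, $K_{\rm tr}(s)$ directly; these are equivalent, and both (like the paper) really only give the claim for $|s|$ small. One small slip in your write-up: in the paper's convention the affinity in $\tilde{v}$ (first inequality in \eqref{4.8}) yields the state equation determining $\tilde{u}^\varepsilon_{t+s}$, while the convex quadratic minimization over $\tilde{u}$ (second inequality) yields the adjoint equation determining $\tilde{v}^\varepsilon_{t+s}$ — you have these two roles swapped, though since both resulting linear equations are governed by the same coercive bilinear form this does not affect the substance of the argument.
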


The proof is given in in Appendix~\ref{D} and
follows the arguments in the proof of Theorem~\ref{theo3}, which treats
a particular case of the saddle-point problem \eqref{4.8} as $s=0$.

\begin{lemma}[T4]\label{lem3}
There exists a subsequence $s_k\to0$ as $k\to\infty$, such that
\begin{equation}\label{4.19}
(\tilde{u}^\varepsilon_{t+s_k}, \tilde{v}^\varepsilon_{t+s_k})\to
(u^\varepsilon_t, v^\varepsilon_t)\quad\text{strongly in $V(\Omega_{t})^2$
as $s_k\to0$}.
\end{equation}
\end{lemma}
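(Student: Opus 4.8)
The plan is to exploit the uniform coercivity established in Theorem~\ref{theo3} and Lemma~\ref{lem2} to first extract a weakly convergent subsequence, then upgrade weak convergence to strong convergence by a standard Minty/monotonicity-plus-energy argument. First I would show that the saddle points $(\tilde u^\varepsilon_{t+s}, \tilde v^\varepsilon_{t+s})$ are bounded in $V(\Omega_t)^2$ uniformly in $s$. Taking $\tilde v = \tilde u^\varepsilon_{t+s}$ (resp.\ the appropriate test function) in the two inequalities of \eqref{4.8}, and using the asymptotic expansion \eqref{4.15} together with the uniform positivity bound \eqref{3.17} — now applied to the perturbed bilinear form whose coefficients are continuous in $s$ and converge to the unperturbed ones by Lemma~\ref{lem2} — one obtains, for $|s|$ small enough that the perturbed coercivity constant stays bounded below by, say, $K_{\rm fc2}/2>0$, an estimate of the form $\|\tilde u^\varepsilon_{t+s}\|^2_{H^1} \le C(1 + \|\tilde u^\varepsilon_{t+s}\|_{H^1})$, hence $\|\tilde u^\varepsilon_{t+s}\|_{H^1}\le C$; an analogous computation with the adjoint equation (the perturbed version of \eqref{3.15}) bounds $\|\tilde v^\varepsilon_{t+s}\|_{H^1}$. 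By reflexivity there is a subsequence $s_k\to0$ with $(\tilde u^\varepsilon_{t+s_k}, \tilde v^\varepsilon_{t+s_k})\rightharpoonup(\bar u,\bar v)$ weakly in $V(\Omega_t)^2$, and by compactness of the traces $\Sigma_t\cap\partial\Omega^\pm_t\hookrightarrow L^2$ (as in Theorems~\ref{theo1}--\ref{theo2}) the jumps converge strongly in $L^2(\Sigma_t)$.

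Next I would pass to the limit in the perturbed variational equations characterizing $(\tilde u^\varepsilon_{t+s_k},\tilde v^\varepsilon_{t+s_k})$. Writing the first-order conditions associated to \eqref{4.8} — the perturbed analogues of \eqref{3.4} and \eqref{3.15} — all coefficients ($\omega^{\rm d}_s$, $\omega^{\rm b}_s$, $\nabla\phi_s^{-1}\circ\phi_s$, $\tilde\nu_{t+s}$, $z\circ\phi_s$, $g\circ\phi_s$, $C\circ\phi_s$) converge uniformly to their $s=0$ values by \eqref{4.1}--\eqref{4.3} and the regularity assumptions; the nonlinear Nemytskii terms $\nabla^2\alpha_{\rm f}$, $[\alpha_{\rm c}^{\prime\prime}+\beta'_\varepsilon]$ evaluated at the fixed argument $[\![r u^\varepsilon_t]\!]$ do not move at all, and the remaining linear-in-$(\tilde u,\tilde v)$ dependence passes to the weak limit, the trace terms using the strong $L^2$ convergence of the jumps. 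Hence $(\bar u,\bar v)$ solves the $s=0$ saddle-point system, and by the uniqueness part of Theorem~\ref{theo3} we conclude $(\bar u,\bar v)=(u^\varepsilon_t, v^\varepsilon_t)$; in particular the whole sequence (not merely the subsequence) converges weakly, and so it suffices to prove strong convergence along the chosen subsequence.

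To upgrade to strong convergence I would test the perturbed equation for $\tilde u^\varepsilon_{t+s_k}$ with $\tilde u^\varepsilon_{t+s_k}$ itself and the limiting equation for $u^\varepsilon_t$ with $u^\varepsilon_t$, subtract, and isolate the perturbed coercive form $\langle A^{s_k}_\varepsilon(u^\varepsilon_t)\,\tilde u^\varepsilon_{t+s_k},\tilde u^\varepsilon_{t+s_k}\rangle$. Using the uniform lower bound \eqref{3.17} for the perturbed form, the right-hand side consists of the data terms ($g\circ\phi_{s_k}$, $\nabla\alpha_{\rm f}(0)$, $[\alpha_{\rm c}^\prime+\beta_\varepsilon](0)$ against $\omega^{\rm b}_{s_k}$) plus cross terms that are all either linear in $\tilde u^\varepsilon_{t+s_k}$ against strongly convergent coefficients, or trace terms controlled by the strong $L^2$ convergence of $[\![\tilde u^\varepsilon_{t+s_k}]\!]$; passing to the limit gives $\limsup_k K_{\rm fc2}\|\tilde u^\varepsilon_{t+s_k}\|^2_{H^1}\le (\text{same expression at }s=0)= K_{\rm fc2}\|u^\varepsilon_t\|^2_{H^1}$ — wait, more precisely one gets $\limsup_k\langle A^{s_k}_\varepsilon u^\varepsilon_{t+s_k},u^\varepsilon_{t+s_k}\rangle\le\langle A^0_\varepsilon u^\varepsilon_t,u^\varepsilon_t\rangle$. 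Combined with weak lower semicontinuity (and the uniform convergence $A^{s_k}_\varepsilon\to A^0_\varepsilon$ on bounded sets), this forces $\|\tilde u^\varepsilon_{t+s_k}\|_{H^1}\to\|u^\varepsilon_t\|_{H^1}$, which together with weak convergence in the Hilbert space $H^1(\Omega\setminus\Sigma_t)^d$ yields strong convergence $\tilde u^\varepsilon_{t+s_k}\to u^\varepsilon_t$. The identical argument applied to the adjoint equation, now using the already-established strong convergence of $\tilde u^\varepsilon_{t+s_k}$ to handle the coefficient $A^{s_k}_\varepsilon(\tilde u^\varepsilon_{t+s_k})$, gives $\tilde v^\varepsilon_{t+s_k}\to v^\varepsilon_t$ strongly, establishing \eqref{4.19}.

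The main obstacle I anticipate is the bookkeeping of the trace/interface terms: the normal $\tilde\nu_{t+s}$ and tangential projection in \eqref{4.11} depend on $s$, so strong $L^2$-convergence of $[\![\tilde u^\varepsilon_{t+s}]\!]$ alone is not quite enough — one needs that $\tilde\nu_{t+s}\to\nu_t$ uniformly on $\Sigma_t$ (which follows from the $W^{1,\infty}$-regularity of the flow in \eqref{4.1} and $C^1$-dependence of $\Sigma_{t+s}$ on $s$) so that $\tilde\nu_{t+s}\cdot[\![\tilde u^\varepsilon_{t+s}]\!]\to\nu_t\cdot[\![u^\varepsilon_t]\!]$ and $[\![\tilde u^\varepsilon_{t+s}]\!]_{\tilde\tau_{t+s}}\to[\![u^\varepsilon_t]\!]_{\tau_t}$ strongly in $L^2(\Sigma_t)$. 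Care is also needed to verify that the perturbed coercivity constant in \eqref{3.17} degrades continuously and stays positive for $|s|$ small, which is where the smallness assumption \eqref{3.14} is used with a margin.
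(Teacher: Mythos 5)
Your proposal is correct and follows essentially the same route as the paper's Appendix~\ref{B}: uniform a priori bounds from the $s$-perturbed coercivity under assumption \eqref{3.14}, extraction of a weakly convergent subsequence with strong $L^2$ trace convergence, identification of the limit by passing to the limit in the perturbed optimality systems and invoking uniqueness, and finally an energy argument to upgrade to strong convergence. The only (immaterial) differences are that the paper isolates just the bulk elastic form via Korn--Poincar\'e in \eqref{B10} and pushes the interface terms to the right-hand side, and for the (linear) adjoint it subtracts the two equations directly in \eqref{B12}--\eqref{B15} rather than repeating the norm-convergence argument.
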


The proof of Lemma~\ref{lem3} is technical. It is presented in Appendix~\ref{B}.

Based on the  properties (T1)--(T4) we establish the main result of this section.

\begin{theorem}[Shape differentiability of $\varepsilon$-regularized
optimization problem]\label{theo4}
Under assumption \eqref{3.14}, the shape derivative (see its definition
\eqref{1.10} and existence criterion \eqref{1.12}) can be expressed
by the partial derivative from \eqref{4.16} as
\begin{multline}\label{4.20}
\partial_t \mathcal{J}(u^\varepsilon_t; \Omega_t)
={\textstyle\frac{\partial}{\partial s}} \tilde{\mathcal{L}}^\varepsilon
(0, u^\varepsilon_t, u^\varepsilon_t, v^\varepsilon_t; \Omega_{t})
=\int_{\Gamma^{\rm O}_{t}} \Bigl( \frac{1}{2}
{\rm div}_{\tau_t} \Lambda\, |u^\varepsilon_t -z|^2
-\nabla z \Lambda\cdot (u^\varepsilon_t -z) \Bigr) dS_x\\
-\int_{\Omega\setminus\Sigma_{t}} \Bigl( \bigl( {\rm div} \Lambda\, C
+\nabla C \Lambda \bigr) \epsilon(u^\varepsilon_t)\cdot \epsilon(v^\varepsilon_t)
-\sigma(u^\varepsilon_t)\cdot E(\nabla \Lambda, v^\varepsilon_t)
-\sigma(v^\varepsilon_t)\cdot E(\nabla \Lambda, u^\varepsilon_t) \Bigr) dx\\
+\int_{\Gamma^{\rm N}_{t}} \bigl( {\rm div}_{\tau_t} \Lambda\, g
+\nabla g\, \Lambda \bigr)\cdot v^\varepsilon_t \,dS_x
-\int_{\Sigma_t} \Bigl\{ \nabla \alpha_{\rm f}([\![u^\varepsilon_t]\!]_{\tau_t})
\cdot \bigl( {\rm div}_{\tau_t} \Lambda\, [\![v^\varepsilon_t]\!]_{\tau_t}
+[\![v^\varepsilon_t]\!]_{\nabla \tau_t \Lambda} \bigr)\\
+\int_0^1 \bigl( \nabla^2 \alpha_{\rm f}([\![r u^\varepsilon_t]\!]_{\tau_t})\,
[\![u^\varepsilon_t]\!]_{\nabla \tau_t \Lambda} \bigr)\cdot
[\![v^\varepsilon_t]\!]_{\tau_t} dr +[\alpha_{\rm c}^\prime +\beta_\varepsilon]
(\nu_t \cdot[\![u^\varepsilon_t]\!]) \bigl( ({\rm div}_{\tau_t} \Lambda\, \nu_t
+\nabla \nu_t \Lambda)\cdot [\![v^\varepsilon_t]\!] \bigr)\\
+\int_0^1 [\alpha_{\rm c}^{\prime\prime} +\beta^\prime_\varepsilon]
(\nu_t \cdot[\![r u^\varepsilon_t]\!])\, \bigl( \nabla\nu_t \Lambda\cdot
[\![u^\varepsilon_t]\!] \bigr) (\nu_t \cdot[\![v^\varepsilon_t]\!]) \,dr \Bigr\} \,dS_x
+\rho \int_{\Sigma_{t}} {\rm div}_{\tau_t} \Lambda \,dS_x,
\end{multline}
where $(u^\varepsilon_t, v^\varepsilon_t)\in V(\Omega_t)^2$
is a saddle-point to \eqref{3.11}.
\end{theorem}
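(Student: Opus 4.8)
The plan is to establish the chain of equalities in \eqref{4.20} by combining the variational structure of the saddle-point problem with the shape-calculus machinery assembled in Lemmas~\ref{lem2}--\ref{lem3}. The first equality, $\partial_t \mathcal{J}(u^\varepsilon_t;\Omega_t) = \frac{\partial}{\partial s}\tilde{\mathcal{L}}^\varepsilon(0,u^\varepsilon_t,u^\varepsilon_t,v^\varepsilon_t;\Omega_t)$, is the substantive one; the second is just the evaluation of \eqref{4.16} at $\tau=0$, $\tilde{u}=u^\varepsilon_t$, $\tilde{v}=v^\varepsilon_t$, which uses that $\Lambda|_t = \Lambda$ and simplifies the mixed second-order terms by the product rule for $\nabla^2\alpha_{\rm f}$ (the two integrals $\int_0^1$ combine, and the linearized bulk terms collapse to the genuine $\sigma(\cdot)$ expressions via $E(I,\cdot)=\epsilon(\cdot)$ as noted after \eqref{4.12}). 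So the real work is the first equality.

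For the first equality I would start from the difference-quotient characterization \eqref{1.12}, namely $\partial_t\mathcal{J} = \lim_{s\to0^+}\frac1s\bigl(\tilde{\mathcal{L}}^\varepsilon(s,u^\varepsilon_t,\tilde u^\varepsilon_{t+s},\tilde v^\varepsilon_{t+s};\Omega_t) - \tilde{\mathcal{L}}^\varepsilon(0,u^\varepsilon_t,u^\varepsilon_t,v^\varepsilon_t;\Omega_t)\bigr)$, which is legitimate because \eqref{3.12} (the primal-dual representation) transports to the perturbed geometry via \eqref{4.6}, giving $\mathcal{J}(u^\varepsilon_{t+s};\Omega_{t+s}) = \tilde{\mathcal{L}}^\varepsilon(s,u^\varepsilon_t,\tilde u^\varepsilon_{t+s},\tilde v^\varepsilon_{t+s};\Omega_t)$ by Lemma~\ref{lem1}. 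The standard trick is to split the increment by adding and subtracting $\tilde{\mathcal{L}}^\varepsilon(s,u^\varepsilon_t,u^\varepsilon_t,v^\varepsilon_t;\Omega_t)$:
\begin{multline*}
\tilde{\mathcal{L}}^\varepsilon(s,u^\varepsilon_t,\tilde u^\varepsilon_{t+s},\tilde v^\varepsilon_{t+s};\Omega_t) - \tilde{\mathcal{L}}^\varepsilon(0,u^\varepsilon_t,u^\varepsilon_t,v^\varepsilon_t;\Omega_t)\\
= \bigl[\tilde{\mathcal{L}}^\varepsilon(s,u^\varepsilon_t,\tilde u^\varepsilon_{t+s},\tilde v^\varepsilon_{t+s};\Omega_t) - \tilde{\mathcal{L}}^\varepsilon(s,u^\varepsilon_t,u^\varepsilon_t,v^\varepsilon_t;\Omega_t)\bigr]\\
+ \bigl[\tilde{\mathcal{L}}^\varepsilon(s,u^\varepsilon_t,u^\varepsilon_t,v^\varepsilon_t;\Omega_t) - \tilde{\mathcal{L}}^\varepsilon(0,u^\varepsilon_t,u^\varepsilon_t,v^\varepsilon_t;\Omega_t)\bigr].
\end{multline*}
The second bracket, divided by $s$, converges to $\frac{\partial}{\partial s}\tilde{\mathcal{L}}^\varepsilon(0,u^\varepsilon_t,u^\varepsilon_t,v^\varepsilon_t;\Omega_t)$ directly by the asymptotic expansion \eqref{4.15} of Lemma~\ref{lem2}, with continuity of $\tau\mapsto\frac{\partial}{\partial s}\tilde{\mathcal{L}}^\varepsilon$ at $\tau=0$. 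The first bracket is where the saddle-point optimality is used: for each fixed $s$, $(\tilde u^\varepsilon_{t+s},\tilde v^\varepsilon_{t+s})$ is a saddle point of $\tilde{\mathcal{L}}^\varepsilon(s,u^\varepsilon_t,\cdot,\cdot;\Omega_t)$ by Lemma~\ref{lem1}, and the saddle-point inequalities \eqref{4.8} sandwich this bracket between two quantities that are each $\mathrm{o}(s)$; this is the Correa--Seeger / Delfour--Zolesio lemma on differentiability of saddle values, and it crucially requires the strong convergence $(\tilde u^\varepsilon_{t+s_k},\tilde v^\varepsilon_{t+s_k})\to(u^\varepsilon_t,v^\varepsilon_t)$ from Lemma~\ref{lem3} together with the uniform (in $s$) continuity of the expansion constants in \eqref{4.15}.

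The main obstacle is making the first-bracket estimate rigorous without differentiability of the saddle-point map $s\mapsto(\tilde u^\varepsilon_{t+s},\tilde v^\varepsilon_{t+s})$: one only has the subsequential strong convergence of Lemma~\ref{lem3}, not a rate. The remedy is the classical averaging argument — write the first bracket, using the saddle inequalities at $s$ and at $0$, as
$$
0 \le \tilde{\mathcal{L}}^\varepsilon(s,u^\varepsilon_t,\tilde u^\varepsilon_{t+s},\tilde v^\varepsilon_{t+s}) - \tilde{\mathcal{L}}^\varepsilon(s,u^\varepsilon_t,u^\varepsilon_t,v^\varepsilon_t) - \bigl(\text{genuine increment}\bigr)
$$
squeezed from above and below by $\tilde{\mathcal{L}}^\varepsilon(s,u^\varepsilon_t,\cdot,\cdot)$ evaluated at the "wrong" arguments $(u^\varepsilon_t,v^\varepsilon_t)$ versus $(\tilde u^\varepsilon_{t+s},\tilde v^\varepsilon_{t+s})$, each of which is $\mathrm{o}(s)$ because the $\mathrm{o}(|s|)$ term in \eqref{4.15} is uniform over the relevant bounded sets (the saddle points are uniformly bounded in $V(\Omega_t)$ by the coercivity estimate \eqref{3.17} with constant $K_{\rm fc2}>0$ from \eqref{3.14}) and because $\frac{\partial}{\partial s}\tilde{\mathcal{L}}^\varepsilon(0,u^\varepsilon_t,\cdot,\cdot)$ is continuous in its arguments, so it vanishes in the limit along the strongly convergent subsequence. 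Since the limit value $\frac{\partial}{\partial s}\tilde{\mathcal{L}}^\varepsilon(0,u^\varepsilon_t,u^\varepsilon_t,v^\varepsilon_t;\Omega_t)$ is independent of the subsequence, the full limit \eqref{1.10} exists and equals \eqref{4.20}. A secondary bookkeeping point, handled by a routine computation, is verifying that at $\tau=0$ the two occurrences of $\int_0^1 \nabla^2\alpha_{\rm f}(\cdot)[\![\cdot]\!]_{\tau_t}\,dr + \nabla\alpha_{\rm f}(0)$ and of $\int_0^1[\alpha_{\rm c}''+\beta_\varepsilon'](\cdot)(\nu_t\cdot[\![\cdot]\!])\,dr + [\alpha_{\rm c}'+\beta_\varepsilon](0)$ recombine, via \eqref{3.9} evaluated at $u^\varepsilon_t$, into $\nabla\alpha_{\rm f}([\![u^\varepsilon_t]\!]_{\tau_t})$ and $[\alpha_{\rm c}'+\beta_\varepsilon](\nu_t\cdot[\![u^\varepsilon_t]\!])$ respectively, which is exactly what produces the form displayed in \eqref{4.20}.
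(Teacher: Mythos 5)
Your proposal is correct and takes essentially the same approach as the paper: the paper's proof consists of checking properties (T1)--(T4) and invoking Delfour--Zol\'esio \cite[Chapter~10, Theorem~5.1]{DZ/11} (with details delegated to \cite{KO/20}), and what you have written out --- the split of the difference quotient, the sandwich via the saddle-point inequalities \eqref{4.8}, the expansion \eqref{4.15} of Lemma~\ref{lem2}, and the strong subsequential convergence of Lemma~\ref{lem3} to kill the first bracket --- is precisely the standard proof of that cited min--max differentiability theorem, with each lemma playing exactly the role the theorem's hypotheses assign to it. The identification of the perturbed saddle value with $\mathcal{J}(u^\varepsilon_{t+s};\Omega_{t+s})$ that you take as a starting point is the paper's own premise \eqref{1.12}, so you introduce no gap beyond what the paper itself assumes.
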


\begin{proof}
Indeed, due to  (T1)--(T4)  all assumptions in Delfour--Zolesio
\cite[Chapter~10, Theorem~5.1]{DZ/11} are satisfied.
Details of the proof can be found in \cite{KO/20}.
\qed
\end{proof}

\begin{corollary}[Hadamard representation of the $\varepsilon$-dependent
shape derivative]\label{corol1}
Assume that the solution of \eqref{3.4} and \eqref{3.15} satisfies
$(u^\varepsilon_t, v^\varepsilon_t)\in H^2(\Omega^+_t)^{2d}\cap
H^2(\Omega^-_t)^{2d}$.
Introducing the decomposition into  normal and tangential components according to
\begin{equation}\label{4.21}
\Lambda = (n_t\cdot \Lambda) n_t +\Lambda_{\tau_t},\quad
\nabla = (n_t\cdot \nabla) n_t +\nabla_{\tau_t},\quad
\mathcal{D} = (n_t\cdot \mathcal{D}) n_t +\mathcal{D}_{\tau_t},
\end{equation}
the following equivalent representation of the shape derivative \eqref{4.20}
holds in terms of boundary integrals in 2D:
\begin{multline}\label{4.22}
{\textstyle\frac{\partial}{\partial s}} \tilde{\mathcal{L}}^\varepsilon
(0, u^\varepsilon_t, u^\varepsilon_t, v^\varepsilon_t; \Omega_{t})\\
=\int_{\Gamma^{\rm D}_{t}} (\tau_t\cdot\Lambda) \tau_t\cdot \mathcal{D}_1
(u^\varepsilon_t, v^\varepsilon_t) \,dS_x
+\int_{\Sigma_{t}} \bigl( (\tau_t\cdot\Lambda) \tau_t\cdot
\mathcal{D}^\varepsilon_2 (u^\varepsilon_t, v^\varepsilon_t)
+(\nu_t\cdot \Lambda) \mathcal{D}^\varepsilon_3
(u^\varepsilon_t, v^\varepsilon_t) \bigr) \,dS_x\\
+(\tau_t\cdot\Lambda) [\![\mathcal{D}^\varepsilon_4 (u^\varepsilon_t,
v^\varepsilon_t)]\!]_{\partial\Sigma_{t}}
+(\tau_t\cdot\Lambda) \mathcal{D}_5
(u^\varepsilon_t)|_{\partial\Gamma^{\rm O}_{t}}
+(\tau_t\cdot\Lambda) [\![\mathcal{D}_6
(v^\varepsilon_t)]\!]_{\partial\Gamma^{\rm N}_{t}\cap\Sigma_{t}},
\end{multline}
where $\tau_t$ is a tangential vector at the boundary, and in 3D:
\begin{multline}\label{4.23}
=\int_{\Gamma^{\rm D}_{t}} \Lambda_{\tau_t}\cdot \mathcal{D}_1
(u^\varepsilon_t, v^\varepsilon_t)_{\tau_t} dS_x
+\int_{\Sigma_{t}} \bigl( \Lambda_{\tau_t}\cdot
\mathcal{D}^\varepsilon_2 (u^\varepsilon_t, v^\varepsilon_t)_{\tau_t}
+(\nu_t\cdot \Lambda) \mathcal{D}^\varepsilon_3
(u^\varepsilon_t, v^\varepsilon_t) \bigr) \,dS_x\\
+{\displaystyle\int_{\partial\Sigma_{t}}} \!\!\!\! (b_t\cdot\Lambda)
[\![\mathcal{D}^\varepsilon_4 (u^\varepsilon_t, v^\varepsilon_t)]\!] dL_x
+{\displaystyle\int_{\partial\Gamma^{\rm O}_{t}}} \!\!\!\!
(b_t\cdot\Lambda) \mathcal{D}_5 (u^\varepsilon_t) dL_x
+{\displaystyle\int_{\partial\Gamma^{\rm N}_{t}\cap\Sigma_{t}}} \!\!\!\!
(b_t\cdot\Lambda) [\![\mathcal{D}_6 (v^\varepsilon_t)]\!] dL_x,
\end{multline}
where $b_t =\tau_t\times n_t$ is a binomial vector within the moving frame
at the respective boundary.
The terms in \eqref{4.22} and \eqref{4.23} are
\begin{multline}\label{4.24}
\mathcal{D}_1(\tilde{u}, \tilde{v}) :=\nabla \tilde{u}^\top \sigma(\tilde{v}) n_t
+\nabla \tilde{v}^\top \sigma(\tilde{u}) n_t,\quad
\mathcal{D}^\varepsilon_2(\tilde{u}, \tilde{v})
:=-[q_{\rm f} +q^\varepsilon_{\rm c}]_{\tau_t} (\tilde{u}, \tilde{v}),\\
\mathcal{D}^\varepsilon_3(\tilde{u}, \tilde{v})
:=[\![\sigma(\tilde{u})\cdot \epsilon(\tilde{v})]\!] +\rho \varkappa_{t}
-\varkappa_{t}  [p_{\rm f} +p^\varepsilon_{\rm c}] (\tilde{u}, \tilde{v})
-\nu_t\cdot [\nabla  (p_{\rm f} +p^\varepsilon_{\rm c})
+q_{\rm f} +q^\varepsilon_{\rm c}] (\tilde{u}, \tilde{v}),\\
\mathcal{D}^\varepsilon_4(\tilde{u}, \tilde{v})
:=\rho -[p_{\rm f} +p^\varepsilon_{\rm c}] (\tilde{u}, \tilde{v}),\quad
\mathcal{D}_5(\tilde{u}) :=\frac{1}{2} |\tilde{u} - z|^2,\quad
\mathcal{D}_6(\tilde{v}) :=g\cdot\tilde{v},
\end{multline}
with the curvature $\varkappa_{t} ={\rm div}_{\tau_t} \nu_{t}$ at $\Sigma_t$.
The expressions along $\Sigma_{t}$ are defined by
\begin{equation}\label{4.25}
p_{\rm f}(\tilde{u}, \tilde{v}) :=\nabla\alpha_{\rm f}
([\![\tilde{u}]\!]_{\tau_t}) \cdot[\![\tilde{v}]\!]_{\tau_t},\quad
p^\varepsilon_{\rm c}(\tilde{u}, \tilde{v})
:=[\alpha_{\rm c}^\prime +\beta_\varepsilon] (\nu_t\cdot[\![\tilde{u}]\!])\,
(\nu_t\cdot[\![\tilde{v}]\!]),
\end{equation}
and next
\begin{multline}\label{4.26}
q_{\rm f}(\tilde{u}, \tilde{v}) :=[\![\nabla \tilde{v}]\!]^\top\nu_t
\bigl( \nu_t\cdot \nabla\alpha_{\rm f} ([\![\tilde{u}]\!]_{\tau_t}) \bigr)
+[\![\nabla \tilde{u}]\!]^\top\nu_t \Bigl( \nu_t\cdot \int_0^1 \nabla^2\alpha_{\rm f}
([\![r u^t_\varepsilon]\!]_{\tau_t}) [\![\tilde{v}]\!]_{\tau_t} dr \Bigr)\\
+\nabla ([\![\tilde{u}]\!]_{\tau_t})^\top \int_0^1 \bigl(
\nabla^2 \alpha_{\rm f} ([\![r u^\varepsilon_t]\!]_{\tau_t})
-\nabla^2 \alpha_{\rm f} ([\![u^\varepsilon_t]\!]_{\tau_t}) \bigr)
[\![\tilde{v}]\!]_{\tau_t} \,dr,\\
q^\varepsilon_{\rm c}(\tilde{u}, \tilde{v})
:=\nabla(\nu_t\cdot [\![\tilde{u}]\!])^\top \int_0^1 \bigl(
[\alpha^{\prime\prime}_{\rm c} +\beta^\prime_\varepsilon]
(\nu_t\cdot [\![r u^\varepsilon_t]\!]) -[\alpha^{\prime\prime}_{\rm c}
+\beta^\prime_\varepsilon] (\nu_t\cdot [\![u^\varepsilon_t]\!]) \bigr)
 (\nu_t\cdot [\![\tilde{v}]\!]) \,dr.
\end{multline}
\end{corollary}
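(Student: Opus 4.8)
The plan is to take the distributed shape derivative \eqref{4.20} --- in which $\Lambda$ enters through the bulk quantities ${\rm div}\,\Lambda$, $\nabla\Lambda$ and the surface quantities ${\rm div}_{\tau_t}\Lambda$, $\nabla\tau_t\Lambda$, $\nabla\nu_t\Lambda$ --- and, using the strong (boundary value) forms of the primal, penalty and adjoint equations \eqref{3.5} and \eqref{3.16}, rewrite it so that $\Lambda$ appears only through pointwise traces on $\Gamma^{\rm D}_t$, $\Gamma^{\rm N}_t$, $\Sigma_t$ and on the lower-dimensional sets $\partial\Sigma_t$, $\partial\Gamma^{\rm O}_t$, $\partial\Gamma^{\rm N}_t\cap\Sigma_t$. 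The regularity hypothesis $(u^\varepsilon_t,v^\varepsilon_t)\in H^2(\Omega^+_t)^{2d}\cap H^2(\Omega^-_t)^{2d}$ is what makes this licit: it gives $L^2$-traces on $\partial\Omega$ and $\Sigma_t$ of $\sigma(u^\varepsilon_t)$, $\sigma(v^\varepsilon_t)$, of their one-sided normal derivatives $[\![\nabla u^\varepsilon_t]\!]^\top\nu_t$, $[\![\nabla v^\varepsilon_t]\!]^\top\nu_t$, and of the tangential derivatives $\nabla([\![u^\varepsilon_t]\!]_{\tau_t})$, $\nabla(\nu_t\cdot[\![u^\varepsilon_t]\!])$ that appear in $q_{\rm f}$, $q^\varepsilon_{\rm c}$ of \eqref{4.26}, so that all the Green and tangential-Green formulas below have a meaning.

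First I would treat the bulk integral over $\Omega\setminus\Sigma_t$ in \eqref{4.20}. The pointwise algebraic identity expressing $\sigma(u)\cdot E(\nabla\Lambda,v)+\sigma(v)\cdot E(\nabla\Lambda,u)-({\rm div}\,\Lambda\,C+\nabla C\Lambda)\epsilon(u)\cdot\epsilon(v)$ as a divergence plus terms proportional to ${\rm div}\,\sigma(u)$ and ${\rm div}\,\sigma(v)$, combined with the equilibrium equations ${\rm div}\,\sigma(u^\varepsilon_t)={\rm div}\,\sigma(v^\varepsilon_t)=0$, collapses the bulk term into boundary integrals over $\partial\Omega\cup\Sigma_t^{\pm}$ whose integrands are quadratic in $(\nabla u^\varepsilon_t,\nabla v^\varepsilon_t,\sigma(u^\varepsilon_t),\sigma(v^\varepsilon_t))$ contracted with $\Lambda$. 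On $\Sigma_t$ the two one-sided contributions assemble into the jump $[\![\sigma(u^\varepsilon_t)\cdot\epsilon(v^\varepsilon_t)]\!]$ in the $\nu_t$-direction and into $\mathcal{D}_1(u^\varepsilon_t,v^\varepsilon_t)$ in the tangential direction; on $\partial\Omega$ I would split into $\Gamma^{\rm D}_t\cup\Gamma^{\rm N}_t$, use $u^\varepsilon_t=v^\varepsilon_t=0$ on $\Gamma^{\rm D}_t$, $\sigma(u^\varepsilon_t)n=g$ on $\Gamma^{\rm N}_t$, $\sigma(v^\varepsilon_t)n=u^\varepsilon_t-z$ on $\Gamma^{\rm O}_t$ (and $=0$ on $\Gamma^{\rm N}_t\setminus\Gamma^{\rm O}_t$), and exploit $n_t\cdot\Lambda=0$ on $\partial\Omega$ (the hold-all constraint of Section~\ref{sec4}), so that only $\Lambda_{\tau_t}$ survives; together with the $\Gamma^{\rm N}_t$- and $\Gamma^{\rm O}_t$-surface integrals already present in \eqref{4.20} and a tangential integration by parts on $\partial\Omega$, this leaves the surface term $\mathcal{D}_1$ on $\Gamma^{\rm D}_t$ (the unprescribed reaction contracted with the tangential velocity) and the endpoint terms $\mathcal{D}_5$ on $\partial\Gamma^{\rm O}_t$, $\mathcal{D}_6$ on $\partial\Gamma^{\rm N}_t\cap\Sigma_t$.

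The core of the computation is the surface-energy contribution on $\Sigma_t$. Here I would apply the tangential Green formula $\int_{\Sigma_t}{\rm div}_{\tau_t}(fV)\,dS_x=\int_{\Sigma_t}\bigl(f\,{\rm div}_{\tau_t}V+\nabla_{\tau_t}f\cdot V\bigr)\,dS_x=\int_{\partial\Sigma_t}f\,(b_t\cdot V)\,dL_x+\int_{\Sigma_t}\varkappa_t\,f\,(\nu_t\cdot V)\,dS_x$ with $\varkappa_t={\rm div}_{\tau_t}\nu_t$, applied to each product in \eqref{4.20} containing ${\rm div}_{\tau_t}\Lambda$, in order to move the tangential derivative off $\Lambda$; the curvature term so produced generates the $\rho\varkappa_t-\varkappa_t[p_{\rm f}+p^\varepsilon_{\rm c}]$ pieces of $\mathcal{D}^\varepsilon_3$ and the jump $\mathcal{D}^\varepsilon_4=\rho-[p_{\rm f}+p^\varepsilon_{\rm c}]$ at the crack tip $\partial\Sigma_t$, while the $\nabla_{\tau_t}f\cdot\Lambda$ term, after inserting the interface conditions of \eqref{3.5} and \eqref{3.16} (which identify $\sigma(u^\varepsilon_t)\nu_t$, $\sigma(v^\varepsilon_t)\nu_t$ with the $\nabla\alpha_{\rm f}$- and $\nabla^2\alpha_{\rm f}$-expressions) and after expanding $[\![\cdot]\!]_{\nabla\tau_t\Lambda}$, $\nabla\nu_t\Lambda$ via \eqref{4.17} and the $H^2$-traces of $\nabla u^\varepsilon_t$, $\nabla v^\varepsilon_t$, reduces to the quantities $q_{\rm f}$, $q^\varepsilon_{\rm c}$ of \eqref{4.26}. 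The last step is organizational: one evaluates \eqref{4.20} at $\tilde u=u^\varepsilon_t$, $\tilde v=v^\varepsilon_t$ --- using $\int_0^1\nabla^2\alpha_{\rm f}([\![r u^\varepsilon_t]\!]_{\tau_t})[\![u^\varepsilon_t]\!]_{\tau_t}\,dr+\nabla\alpha_{\rm f}(0)=\nabla\alpha_{\rm f}([\![u^\varepsilon_t]\!]_{\tau_t})$ and the analogous identity for $\alpha_{\rm c}$, which is exactly why the averaged-derivative remainders $\int_0^1(\nabla^2\alpha_{\rm f}([\![r u^\varepsilon_t]\!]_{\tau_t})-\nabla^2\alpha_{\rm f}([\![u^\varepsilon_t]\!]_{\tau_t}))\,dr$ and its $\alpha_{\rm c}''+\beta'_\varepsilon$ analogue survive inside $q_{\rm f}$, $q^\varepsilon_{\rm c}$ --- and then collects the coefficients of $\nu_t\cdot\Lambda$, of $\Lambda_{\tau_t}$ on $\Sigma_t$ and on $\Gamma^{\rm D}_t$, and of $b_t\cdot\Lambda$ on the three edges, reading them off as $\mathcal{D}^\varepsilon_2,\mathcal{D}^\varepsilon_3,\mathcal{D}^\varepsilon_4,\mathcal{D}_1,\mathcal{D}_5,\mathcal{D}_6$; specializing $d=2$ (where $\partial\Sigma_t$, $\partial\Gamma^{\rm O}_t$, $\partial\Gamma^{\rm N}_t\cap\Sigma_t$ are points and $b_t=\tau_t\times n_t$ collapses to $\pm\tau_t$) gives \eqref{4.22}, and the general $d=3$ case gives \eqref{4.23}. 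The main obstacle is bookkeeping rather than any new idea: correctly reassembling the one-sided interface traces into jumps, tracking the many $\nabla\tau_t\Lambda$/$\nabla\nu_t\Lambda$ surface-energy terms through the tangential integrations by parts so that nothing is double-counted, and verifying that the residual interior terms genuinely cancel by the Euler--Lagrange equations --- this is precisely where $H^2$-regularity is indispensable and where one must keep $\nabla\alpha_{\rm f}$, $\alpha_{\rm c}'$ evaluated only along the true solution, hence the remainder integrals in $q_{\rm f}$, $q^\varepsilon_{\rm c}$.
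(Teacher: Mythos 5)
Your proposal follows essentially the same route as the paper's proof in Appendix~\ref{C}: integration by parts of the bulk term via the equilibrium equations and the hold-all condition $n_t\cdot\Lambda=0$, insertion of the boundary/interface conditions \eqref{3.5}, \eqref{3.16}, the product-rule identities assembling $\nabla p_{\rm f}$, $\nabla p^\varepsilon_{\rm c}$ with remainders $q_{\rm f}$, $q^\varepsilon_{\rm c}$, and finally the tangential Stokes formula \eqref{C9} to convert the ${\rm div}_{\tau_t}\Lambda$-terms into curvature and edge contributions. The argument is correct and matches the paper's proof in structure and in all key steps.
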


The proof of Corollary~\ref{corol1} is given in Appendix~\ref{C}.

We remark that the additional $H^2$-regularity is available when a piecewise
$C^{2,0}$-boundaries $\partial\Omega^\pm_t$ exclude singular points
(e.g. in 2D when the boundary parts meet each other with an $\pi/2$-angle
as in Figure~\ref{fig1}).

\begin{corollary}[Descent direction for the $\varepsilon$-dependent optimization]\label{corol2}
A descent direction for the perturbed $\tilde{\mathcal{L}}^\varepsilon$ in \eqref{4.15}
is provided by the following choice of the velocity
\begin{multline}\label{4.27}
\tau_t\cdot\Lambda =-k_1 \tau_t\cdot\mathcal{D}_1 (u^\varepsilon_t, v^\varepsilon_t)
\text{ at $\Gamma^{\rm D}_{t}$},\quad
\tau_t\cdot\Lambda =-k_2 \tau_t\cdot \mathcal{D}^\varepsilon_2
(u^\varepsilon_t, v^\varepsilon_t)\text{ and }
\nu_t\cdot \Lambda =-k_3 \mathcal{D}^\varepsilon_3
(u^\varepsilon_t, v^\varepsilon_t)\text{ at $\Sigma_{t}$},\\
\tau_t\cdot\Lambda =-k_4 [\![\mathcal{D}^\varepsilon_4
(u^\varepsilon_t, v^\varepsilon_t)]\!]\text{ at $\partial\Sigma_{t}$},\;
\tau_t\cdot\Lambda =-k_5\mathcal{D}_5 (u^\varepsilon_t)
\text{ at $\partial\Gamma^{\rm O}_{t}$},\;
\tau_t\cdot\Lambda =-k_6 [\![\mathcal{D}_6 (v^\varepsilon_t)]\!]
\text{at $\partial\Gamma^{\rm N}_{t}\cap\Sigma_{t}$},\\
n_t\cdot\Lambda =0\text{ at $\partial\Omega$}
\end{multline}
in 2D, and in 3D respectively
\begin{multline}\label{4.28}
\Lambda_{\tau_t} =-k_1 \mathcal{D}_1 (u^\varepsilon_t, v^\varepsilon_t)_{\tau_t}
\text{ at $\Gamma^{\rm D}_{t}$},\quad
\Lambda_{\tau_t} =-k_2 \mathcal{D}^\varepsilon_2
(u^\varepsilon_t, v^\varepsilon_t)_{\tau_t}\text{and }
\nu_t\cdot \Lambda =-k_3 \mathcal{D}^\varepsilon_3
(u^\varepsilon_t, v^\varepsilon_t)\text{ at $\Sigma_{t}$},\\
b_t\cdot\Lambda =-k_4 [\![\mathcal{D}^\varepsilon_4
(u^\varepsilon_t, v^\varepsilon_t)]\!]\text{ at $\partial\Sigma_{t}$},\;
b_t\cdot\Lambda =-k_5\mathcal{D}_5 (u^\varepsilon_t)
\text{ at $\partial\Gamma^{\rm O}_{t}$},\;
b_t\cdot\Lambda =-k_6 [\![\mathcal{D}_6 (v^\varepsilon_t)]\!]
\text{at $\partial\Gamma^{\rm N}_{t}\cap\Sigma_{t}$},\\
n_t\cdot\Lambda =0\text{ at $\partial\Omega$},
\end{multline}
with $k_i\ge0$, $i=1,\ldots,6$, and not all simultaneously equal to zero.
\end{corollary}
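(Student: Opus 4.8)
The plan is to obtain the sign of $\partial_t\mathcal{J}$ directly from the Hadamard (boundary-integral) form of the shape derivative established in Corollary~\ref{corol1}. Under the $H^2$-regularity hypothesis of that corollary, Theorem~\ref{theo4} together with \eqref{4.22} gives in 2D
\[
\partial_t \mathcal{J}(u^\varepsilon_t;\Omega_t)
={\textstyle\frac{\partial}{\partial s}}\tilde{\mathcal{L}}^\varepsilon
(0,u^\varepsilon_t,u^\varepsilon_t,v^\varepsilon_t;\Omega_t),
\]
and the right-hand side is a finite sum of terms of the shape ``(velocity component)\,$\times\,\mathcal{D}$'' with $\mathcal{D}$ ranging over $\mathcal{D}_1,\mathcal{D}^\varepsilon_2,\mathcal{D}^\varepsilon_3,\mathcal{D}^\varepsilon_4,\mathcal{D}_5,\mathcal{D}_6$ from \eqref{4.24}: a surface integral over $\Gamma^{\rm D}_t$ pairing $\tau_t\cdot\Lambda$ with $\tau_t\cdot\mathcal{D}_1$, a surface integral over $\Sigma_t$ pairing $\tau_t\cdot\Lambda$ with $\tau_t\cdot\mathcal{D}^\varepsilon_2$ and $\nu_t\cdot\Lambda$ with $\mathcal{D}^\varepsilon_3$, and pointwise contributions at $\partial\Sigma_t$, $\partial\Gamma^{\rm O}_t$, $\partial\Gamma^{\rm N}_t\cap\Sigma_t$ pairing $\tau_t\cdot\Lambda$ with $[\![\mathcal{D}^\varepsilon_4]\!]$, $\mathcal{D}_5$, $[\![\mathcal{D}_6]\!]$, respectively; in 3D one argues identically from \eqref{4.23} with $b_t\cdot\Lambda$ in place of $\tau_t\cdot\Lambda$ and line integrals over the codimension-two manifolds.

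First I would substitute the prescribed velocity \eqref{4.27} (resp.\ \eqref{4.28}). With $\tau_t\cdot\Lambda=-k_1\,\tau_t\cdot\mathcal{D}_1$ on $\Gamma^{\rm D}_t$, with $\tau_t\cdot\Lambda=-k_2\,\tau_t\cdot\mathcal{D}^\varepsilon_2$ and $\nu_t\cdot\Lambda=-k_3\,\mathcal{D}^\varepsilon_3$ on $\Sigma_t$, and with the analogous pointwise prescriptions, each integrand becomes the negative of a perfect square weighted by the corresponding $k_i\ge0$; e.g.\ the $\Sigma_t$-term turns into $-\int_{\Sigma_t}\bigl(k_2\,|\tau_t\cdot\mathcal{D}^\varepsilon_2|^2+k_3\,|\mathcal{D}^\varepsilon_3|^2\bigr)\,dS_x\le0$, the $\Gamma^{\rm D}_t$-term into $-k_1\int_{\Gamma^{\rm D}_t}|\tau_t\cdot\mathcal{D}_1|^2\,dS_x\le0$, and the boundary-point terms into $-k_4\,|[\![\mathcal{D}^\varepsilon_4]\!]|^2$, $-k_5\,|\mathcal{D}_5|^2$, $-k_6\,|[\![\mathcal{D}_6]\!]|^2$, all $\le0$ (in 3D the same with $|b_t\cdot\Lambda|^2$ under the corresponding line integrals). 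Summing gives $\partial_t\mathcal{J}(u^\varepsilon_t;\Omega_t)\le0$, and since by hypothesis the $k_i$ are not all zero, the sum is strictly negative unless every $\mathcal{D}$ attached to a nonzero $k_i$ vanishes identically on its carrier set, i.e.\ unless $(u^\varepsilon_t,v^\varepsilon_t)$ is already a stationary configuration. In view of the expansion \eqref{4.15} and Theorem~\ref{theo4} this means that for small $s>0$ one has $\tilde{\mathcal{L}}^\varepsilon(s,u^\varepsilon_t,u^\varepsilon_t,v^\varepsilon_t;\Omega_t)<\tilde{\mathcal{L}}^\varepsilon(0,u^\varepsilon_t,u^\varepsilon_t,v^\varepsilon_t;\Omega_t)$, equivalently $\mathcal{J}$ decreases, so \eqref{4.27}--\eqref{4.28} indeed furnishes a descent direction.

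The step I expect to be the genuine obstacle is not this sign bookkeeping but the admissibility of the velocity: \eqref{4.27}--\eqref{4.28} prescribe $\Lambda$ only on the lower-dimensional pieces $\Gamma^{\rm D}_t$, $\Sigma_t$, $\partial\Sigma_t$, $\partial\Gamma^{\rm O}_t$, $\partial\Gamma^{\rm N}_t\cap\Sigma_t$ of $\overline{\Omega}$, whereas the shape-differentiability framework requires a global field $\Lambda\in W^{1,\infty}(\overline{\Omega})^d$ that restricts to these traces, satisfies $n_t\cdot\Lambda=0$ on $\partial\Omega$ so that the hold-all domain is preserved, and generates through \eqref{4.4} a flow meeting the structural assumptions (T1)--(T4). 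I would check that the prescribed data are mutually compatible (only tangential components are assigned on $\partial\Omega\supset\Gamma^{\rm D}_t$ while the normal component is set to zero there, and $\Gamma^{\rm D}_t$, $\Sigma_t$ and the edge sets are pairwise disjoint up to their relative boundaries), so that a suitable $\Lambda$ is produced by a standard partition-of-unity extension, its $W^{1,\infty}$-regularity being inherited from the $H^2$-regularity of $(u^\varepsilon_t,v^\varepsilon_t)$ assumed in Corollary~\ref{corol1}; the values of $\Lambda$ away from the prescribed sets may be chosen freely, e.g.\ by truncation, without affecting \eqref{4.22}--\eqref{4.23}, which depend only on the traces on $\Gamma^{\rm D}_t$, $\Sigma_t$ and the edges. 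With such an admissible $\Lambda$ fixed, the sign computation of the preceding paragraph applies verbatim and proves the claim.
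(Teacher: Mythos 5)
Your argument coincides with the paper's proof: the paper also establishes the claim by direct substitution of \eqref{4.27} into \eqref{4.22} (resp.\ \eqref{4.28} into \eqref{4.23}), turning each pairing into a nonpositive square weighted by $k_i$. Your additional remarks on the degenerate case where the relevant $\mathcal{D}$-terms vanish and on extending the prescribed traces to an admissible global field $\Lambda\in W^{1,\infty}(\overline{\Omega})^d$ are sensible refinements that the paper leaves implicit, but they do not change the essential argument.
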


\begin{proof}
Direct substitution of \eqref{4.27} into \eqref{4.22} in 2D, respectively
\eqref{4.28} into \eqref{4.23} in 3D, provides that
${\textstyle\frac{\partial}{\partial s}} \tilde{\mathcal{L}}^\varepsilon
(0, u^\varepsilon_t, u^\varepsilon_t, v^\varepsilon_t; \Omega_{t}) <0$.
\qed
\end{proof}

Corollary~\ref{corol2} is of practical importance since it provides well-posedness of
gradient schemes (see Algorithm~\ref{algo1}) based on the descent direction from \eqref{4.27} and  \eqref{4.28}.

\section{The limit as $\varepsilon\to0^+$}\label{sec5}

In the following we derive the limit relations as $\varepsilon\to0^+$.
We recall that all results involving the dual variable
$v^\varepsilon_t$ assume that \eqref{3.14} holds true.

\begin{lemma}[Uniform estimate]\label{lem4}
The following a-priori estimate holds uniformly in $\varepsilon\in (0, \varepsilon_0)$:
\begin{equation}\label{5.1}
\|u^\varepsilon_t\|_{H^1(\Omega\setminus\Sigma_t)^d} +\frac{1}{\sqrt{\varepsilon}}
\| \bigl[ \nu_t \cdot[\![u^\varepsilon_t]\!] \bigr]^-\|_{L^2(\Sigma_t)}
+\|v^\varepsilon_t\|_{H^1(\Omega\setminus\Sigma_t)^d}
\le K,\quad K\ge0.
\end{equation}
Consequently, there exists a subsequence $\varepsilon_k\to0$ as $k\to\infty$
and an accumulation point $(u_t, v_t)\in  K(\Omega_t)\times  V(\Omega_t)$ such that
\begin{equation}\label{5.2}
(u^{\varepsilon_k}_t, v^{\varepsilon_k}_t)\to(u_t, v_t)\text{ weakly in
$H^1(\Omega\setminus\Sigma_t)^{2d}$, $H^{1/2}(\partial\Omega^\pm_t)^{2d}$,
strongly in $L^2(\partial\Omega^\pm_t)^{2d}$}.
\end{equation}
\end{lemma}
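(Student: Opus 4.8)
The plan is to prove the three parts of \eqref{5.1} in the order in which they appear, since the bound on the adjoint $v^\varepsilon_t$ will rely on the one already obtained for $u^\varepsilon_t$, and then to extract a subsequence by reflexivity together with a compact trace embedding. For the primal bound I would test the penalty equation \eqref{3.4} with $u=u^\varepsilon_t$, which makes the expression estimated from below in \eqref{3.6} vanish identically. Setting $s=\nu_t\cdot[\![u^\varepsilon_t]\!]$ and combining the two inequalities of \eqref{3.2} through $s=[s]^+-[s]^-$ gives $\beta_\varepsilon(s)\,s\ge([s]^-)^2/\varepsilon-2\varepsilon K_\beta$; in contrast with the proof of Theorem~\ref{theo2} I keep the term $([s]^-)^2/\varepsilon$ rather than discard it. Repeating the estimate of \eqref{3.6}--\eqref{3.7}, using the lower bounds $\nabla\alpha_{\rm f}(\xi)\cdot\xi\ge-K_{\rm f 1}|\xi|$ and $\alpha_{\rm c}^\prime(s)\,s\ge-K_{\rm c 1}|s|$ from \eqref{2.3}, \eqref{2.5}, the Korn--Poincar\'e inequality \eqref{2.15}, the jump and trace inequalities \eqref{2.17}, and $\varepsilon<\varepsilon_0$, then yields
\[
K_{\rm KP}\|u^\varepsilon_t\|^2_{H^1(\Omega\setminus\Sigma_t)^d}
+\tfrac1\varepsilon\bigl\| \bigl[ \nu_t\cdot[\![u^\varepsilon_t]\!] \bigr]^- \bigr\|^2_{L^2(\Sigma_t)}
\le K_{t {\rm f c} 1}\|u^\varepsilon_t\|_{H^1(\Omega\setminus\Sigma_t)^d}+2\varepsilon_0 K_\beta|\Sigma_t|,
\]
and absorbing the linear term by Young's inequality bounds $\|u^\varepsilon_t\|_{H^1(\Omega\setminus\Sigma_t)^d}$ and $\varepsilon^{-1/2}\bigl\| \bigl[ \nu_t\cdot[\![u^\varepsilon_t]\!] \bigr]^- \bigr\|_{L^2(\Sigma_t)}$ uniformly in $\varepsilon\in(0,\varepsilon_0)$.

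For the adjoint bound I would test the equation \eqref{3.15} with $v=v^\varepsilon_t$. Since $\beta^\prime_\varepsilon\ge0$, the $\beta^\prime_\varepsilon$-contribution to $\langle A_\varepsilon(u^\varepsilon_t)v^\varepsilon_t,v^\varepsilon_t\rangle$ is nonnegative, so the coercivity estimate \eqref{3.17} applies and gives $\langle A_\varepsilon(u^\varepsilon_t)v^\varepsilon_t,v^\varepsilon_t\rangle\ge K_{\rm f c 2}\|v^\varepsilon_t\|^2_{H^1(\Omega\setminus\Sigma_t)^d}$ with $K_{\rm f c 2}>0$ \emph{independent of $\varepsilon$} under assumption \eqref{3.14}. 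By the Cauchy--Schwarz and trace inequality \eqref{2.17}, the right-hand side of \eqref{3.15} is bounded by $K_{\rm tr}\bigl(K_{\rm tr}\|u^\varepsilon_t\|_{H^1(\Omega\setminus\Sigma_t)^d}+\|z\|_{L^2(\Gamma^{\rm O}_t)^d}\bigr)\|v^\varepsilon_t\|_{H^1(\Omega\setminus\Sigma_t)^d}$, and the uniform bound on $\|u^\varepsilon_t\|_{H^1}$ from the first step then delivers the remaining uniform bound in \eqref{5.1}.

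To pass to the limit, \eqref{5.1} and reflexivity furnish a subsequence $\varepsilon_k\to0$ and a pair $(u_t,v_t)\in V(\Omega_t)^2$, with $V(\Omega_t)$ a closed, hence weakly closed, subspace, such that $(u^{\varepsilon_k}_t,v^{\varepsilon_k}_t)\rightharpoonup(u_t,v_t)$ weakly in $H^1(\Omega\setminus\Sigma_t)^{2d}$; continuity of the trace map into $H^{1/2}(\partial\Omega^\pm_t)^{2d}$ and compactness of the embedding $H^{1/2}(\partial\Omega^\pm_t)\hookrightarrow L^2(\partial\Omega^\pm_t)$ then upgrade this to the weak $H^{1/2}$- and strong $L^2$-convergence of traces asserted in \eqref{5.2}. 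Finally, to see $u_t\in K(\Omega_t)$: the middle term of \eqref{5.1} gives $\bigl\| \bigl[ \nu_t\cdot[\![u^{\varepsilon_k}_t]\!] \bigr]^- \bigr\|_{L^2(\Sigma_t)}\le K\sqrt{\varepsilon_k}\to0$, while the strong $L^2(\Sigma_t)$-convergence of the jump together with the Lipschitz continuity of $s\mapsto[s]^-$ gives $\bigl[ \nu_t\cdot[\![u^{\varepsilon_k}_t]\!] \bigr]^-\to\bigl[ \nu_t\cdot[\![u_t]\!] \bigr]^-$ in $L^2(\Sigma_t)$; uniqueness of the limit forces $\bigl[ \nu_t\cdot[\![u_t]\!] \bigr]^-=0$, that is $\nu_t\cdot[\![u_t]\!]\ge0$ on $\Sigma_t$.

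The argument is a chain of energy estimates and I do not anticipate a genuine obstacle; the points needing care are that the three bounds are coupled — the adjoint bound feeds on the primal bound through the source term of \eqref{3.15} — and that the $\varepsilon$-uniformity of the coercivity constant $K_{\rm f c 2}$ hinges on discarding the nonnegative $\beta^\prime_\varepsilon$-term in $A_\varepsilon$, so that assumption \eqref{3.14} alone secures it. Correspondingly, at the first step the weighted term $([s]^-)^2/\varepsilon$ must be retained in the estimate rather than dropped as in the proof of Theorem~\ref{theo2}.
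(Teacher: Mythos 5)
Your proof is correct and follows essentially the same route as the paper: the paper obtains \eqref{5.1} by letting $s\to0$ in the $\varepsilon$- and $s$-uniform estimate \eqref{B7} of Appendix~\ref{B} while retaining the term $([\,\cdot\,]^-)^2/\varepsilon$ from \eqref{3.2}, which is exactly the energy estimate you carry out directly at $s=0$ (test \eqref{3.4} with $u^\varepsilon_t$, test \eqref{3.15} with $v^\varepsilon_t$ using the $\varepsilon$-independent coercivity \eqref{3.17}, then standard compactness). The only difference is presentational — you rederive the bounds in place rather than citing \eqref{B3}, \eqref{B6} — and your closing argument for $u_t\in K(\Omega_t)$ via the Lipschitz continuity of $s\mapsto[s]^-$ matches the paper's.
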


\begin{proof}
Passing $s\to0$ due to the convergences \eqref{B8} and \eqref{B9} and using
the lower bound $\beta_\epsilon(\nu_t \cdot[\![u^\varepsilon_t]\!])
(\nu_t \cdot[\![u^\varepsilon_t]\!])\ge ([ \nu_t \cdot [\![u^\varepsilon_t]\!] ]^-)^2/
\varepsilon -2\varepsilon K_{\beta}$ due to \eqref{3.2}, in the limit
we improve the uniform a-priori estimate \eqref{B7} and get \eqref{5.1}.
Consequently \eqref{5.2} follows by  a standard compactness argument.
Moreover, $[\nu_t \cdot[\![u^{\varepsilon_k}_t]\!]]^-\to0$ ensures
$\nu_t \cdot[\![u_t]\!]\ge0$ at $\Sigma_t$, hence $u_t\in  K(\Omega_t)$.
\qed
\end{proof}

Let $u_t\in  K(\Omega_t)$ be a solution to the VI \eqref{2.13} in Theorem~\ref{theo1}.
According to \eqref{3.10} we introduce the $\varepsilon$-independent Lagrangian
$(u, v)\mapsto \mathcal{L}: V(\Omega_t)^2\mapsto\mathbb{R}$ as
\begin{multline}\label{5.3}
\mathcal{L}(u_t, u, v; \Omega_t) :=\frac{1}{2} \int_{\Gamma^{\rm O}_t} |u - z|^2 \,dS_x
+\rho |\Sigma_t| -\int_{\Omega\setminus\Sigma_t} \sigma(u)\cdot \epsilon(v) \,dx
+\int_{\Gamma^{\rm N}_t} g\cdot v \,dS_x\\ -\int_{\Sigma_t} \Bigl\{
\Bigl( \int_0^1 \nabla^2 \alpha_{\rm f} ([\![r u_t]\!]_{\tau_t})  \,
[\![u]\!]_{\tau_t} dr +\nabla\alpha_{\rm f}(0) \Bigr) \cdot [\![v]\!]_{\tau_t}\\
+\Bigl( \int_0^1 \alpha_{\rm c}^{\prime\prime} (\nu_t\cdot[\![r u_t]\!]) \,
(\nu_t\cdot[\![u]\!]) \,dr +\alpha_{\rm c}^\prime(0) \Bigr) (\nu_t\cdot[\![v]\!]) \Bigr\} \,dS_x.
\end{multline}
Based on Lemma~\ref{lem4} we prove the following.

\begin{theorem}[Limit optimality conditions]\label{theo5}
(i)
There exists a pair $(u_t, \lambda_t)\in V(\Omega_t) \times H^{1/2}(\Sigma_t)^\star$
which satisfies the variational equation
\begin{multline}\label{5.4}
\int_{\Omega\setminus\Sigma_t} \sigma(u_t)\cdot \epsilon(u) \,dx
+\int_{\Sigma_t} \bigl\{ \nabla\alpha_{\rm f}([\![u_t]\!]_{\tau_t})\cdot [\![u]\!]_{\tau_t}
+\alpha_{\rm c}^\prime(\nu_t\cdot[\![u_t]\!])\, (\nu_t\cdot[\![u]\!])\bigr\} \,dS_x\\
+\langle \lambda_t, \nu_t\cdot[\![u]\!] \rangle_{\Sigma_t}
=\int_{\Gamma^{\rm N}_t} g\cdot u \,dS_x
\end{multline}
for all test functions $u\in V(\Omega_t)$, simultaneously with the complementary relations
\begin{equation}\label{5.5}
\nu_t\cdot[\![u_t]\!]\ge0,\quad \lambda_t\le0,\quad
\langle \lambda_t, \nu_t\cdot[\![u_t]\!] \rangle_{\Sigma_t} =0,
\end{equation}
where $\langle\,\cdot\,, \,\cdot\,\rangle_{\Sigma_t}$ stands for the duality pairing
between $H^{1/2}(\Sigma_t)$ and its dual space $H^{1/2}(\Sigma_t)^\star$.
The first component $u_t\in K(\Omega_t)$ solves the VI \eqref{2.13},
and according to \eqref{2.14} the second, $\lambda_t$, satisfies
\begin{equation}\label{5.6}
\lambda_t =\nu_t\cdot(\sigma(u_t)\nu_t) -\alpha_{\rm c}^\prime(\nu_t\cdot[\![u_t]\!])
\quad\text{at $\Sigma_t$}.
\end{equation}

(ii)
Under the assumption \eqref{3.14}, an adjoint pair $(v_t, \mu_t)\in V(\Omega_t)
\times H^{1/2}(\Sigma_t)^\star$ exists and satisfies the adjoint equation
\begin{multline}\label{5.7}
\int_{\Omega\setminus\Sigma_t} \sigma(v)\cdot \epsilon(v_t) \,dx
+\int_{\Sigma_t} \int_0^1 \Bigl\{ \Bigl( \nabla^2 \alpha_{\rm f}
([\![r u_t]\!]_{\tau_t})  \, [\![v]\!]_{\tau_t} \Bigr)\cdot [\![v_t]\!]_{\tau_t}\\
+\alpha_{\rm c}^{\prime\prime} (\nu_t\cdot[\![r u_t]\!]) \,(\nu_t\cdot[\![v]\!])
(\nu_t\cdot[\![v_t]\!]) \Bigr\} \,dr \,dS_x
+\langle \mu_t, \nu_t\cdot[\![v]\!] \rangle_{\Sigma_t}
=\int_{\Gamma^{\rm O}_t} (u_t -z)\cdot v \,dS_x
\end{multline}
for all test functions $v\in V(\Omega_t)$, such that the compatibility relation holds:
\begin{equation}\label{5.8}
\langle \lambda_t -\beta_{\varepsilon}(0), \nu_t\cdot[\![v_t]\!] \rangle_{\Sigma_t}
=\langle \mu_t, \nu_t\cdot[\![u_t]\!] \rangle_{\Sigma_t},
\end{equation}
where $\beta_{\varepsilon}(0) =-exp(-2)$ in \eqref{3.3} does not depend on $\varepsilon$.
In case $v_t$ is smooth, the following boundary value relations hold:
\begin{align}\label{5.9}
{\rm div}\, \sigma(v_t) =0 &\text{ in } \Omega\setminus\Sigma_t,\nonumber\\
v_t =0 \text{ on } \Gamma^{\rm D}_t,\quad \sigma(v_t)n =u_t -z
\text{ on } \Gamma^{\rm O}_t,\quad \sigma(v_t)n =0 &\text{ on }
\Gamma^{\rm N}_t\setminus \Gamma^{\rm O}_t,\nonumber\\
[\![\sigma(v_t)\nu_t]\!] =0,\quad (\sigma(v_t)\nu_t)_{\tau_t}
=\int_0^1 \nabla^2 \alpha_{\rm f} ([\![r u_t]\!]_{\tau_t})
\, [\![v_t]\!]_{\tau_t} \,dr,&\nonumber\\
\nu_t\cdot(\sigma(v_t)\nu_t) =\int_0^1 \alpha_{\rm c}^{\prime\prime}
(\nu_t\cdot[\![r u_t]\!]) \,(\nu_t\cdot[\![v_t]\!]) \,dr +\mu_t
&\text{ on } \Sigma_t.
\end{align}

(iii)
The quadruple $(u_t, v_t, \lambda_t, \mu_t)$ constitutes an accumulation point
as $\varepsilon_k\to0$:
\begin{equation}\label{5.10}
u^{\varepsilon_k}_t\to u_t\text{ strongly in $H^1(\Omega\setminus\Sigma_t)^d$},
\quad v^{\varepsilon_k}_t\rightharpoonup v_t\text{ weakly in $H^1(\Omega\setminus\Sigma_t)^d$},
\end{equation}
\begin{equation}\label{5.11}
\beta_{\varepsilon_k}(\nu_t\cdot[\![u^{\varepsilon_k}_t]\!])\to \lambda_t
\text{ strongly in $H^{1/2}(\Sigma_t)^\star$},
\end{equation}
\begin{equation}\label{5.12}
\int_0^1 \beta^\prime_{\varepsilon_k}(\nu_t\cdot[\![r u^{\varepsilon_k}_t]\!])
\, (\nu_t\cdot[\![v^{\varepsilon_k}_t]\!]) \,dr \rightharpoonup \mu_t
\text{ $\star$-weakly in $H^{1/2}(\Sigma_t)^\star$}.
\end{equation}
\end{theorem}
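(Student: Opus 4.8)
The plan is to pass to the limit $\varepsilon_k\to0$ along the subsequence of Lemma~\ref{lem4} in the penalty equation \eqref{3.4} and the adjoint equation \eqref{3.15}, after rewriting them in terms of the two multipliers
\[
\lambda^\varepsilon_t := \beta_\varepsilon(\nu_t\cdot[\![u^\varepsilon_t]\!]),\qquad
\mu^\varepsilon_t := \int_0^1 \beta^\prime_\varepsilon(\nu_t\cdot[\![r u^\varepsilon_t]\!])\,(\nu_t\cdot[\![v^\varepsilon_t]\!])\,dr,
\]
so that \eqref{3.4} is exactly \eqref{5.4} with $\lambda^\varepsilon_t$ in place of $\lambda_t$, and \eqref{3.15} is exactly \eqref{5.7} with $\mu^\varepsilon_t$ in place of $\mu_t$. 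First I would note that $v\mapsto\nu_t\cdot[\![v]\!]$ admits a bounded right inverse into $H^{1/2}(\Sigma_t)$ (extend $w\nu_t$ one-sidedly, using $\Sigma_t\cap\overline{\Gamma^{\rm D}_t}=\emptyset$). Testing \eqref{3.4} and \eqref{3.15} against such extensions and using the uniform bound \eqref{5.1} together with the $L^\infty$-bounds \eqref{2.3}, \eqref{2.5} on $\nabla\alpha_{\rm f},\alpha_{\rm c}^\prime$ (respectively on $\nabla^2\alpha_{\rm f},\alpha_{\rm c}^{\prime\prime}$, and $\beta^\prime_\varepsilon\ge0$) then gives $\|\lambda^\varepsilon_t\|_{H^{1/2}(\Sigma_t)^\star}+\|\mu^\varepsilon_t\|_{H^{1/2}(\Sigma_t)^\star}\le C$ uniformly in $\varepsilon$; passing to a further subsequence, $\lambda^{\varepsilon_k}_t\rightharpoonup\lambda_t$ and $\mu^{\varepsilon_k}_t\rightharpoonup\mu_t$ $\star$-weakly, and $\beta_\varepsilon\le0$ (Lemma~\ref{lem0}) yields $\lambda_t\le0$.

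Next I would upgrade the weak convergence \eqref{5.2} of $u^{\varepsilon_k}_t$ to strong $H^1$-convergence. Testing \eqref{3.4} with $u^{\varepsilon_k}_t-u_t$, using $u_t\in K(\Omega_t)$ (hence $\beta_{\varepsilon_k}(\cdot)(\nu_t\cdot[\![u_t]\!])\le0$), the bound $\beta_{\varepsilon_k}(s)s\ge-2\varepsilon_k K_\beta$ from \eqref{3.2}, the compactness of the traces, and the boundedness of $\nabla\alpha_{\rm f},\alpha_{\rm c}^\prime$, I would obtain $\limsup_k\int_{\Omega\setminus\Sigma_t}\sigma(u^{\varepsilon_k}_t)\cdot\epsilon(u^{\varepsilon_k}_t-u_t)\,dx\le0$; subtracting $\int\sigma(u_t)\cdot\epsilon(u^{\varepsilon_k}_t-u_t)\,dx\to0$ and invoking \eqref{2.15} gives the first convergence in \eqref{5.10}. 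Strong $H^1$-convergence makes the traces $\nu_t\cdot[\![u^{\varepsilon_k}_t]\!]$ strongly convergent in $H^{1/2}(\Sigma_t)$ and the coefficients $\nabla\alpha_{\rm f}([\![u^{\varepsilon_k}_t]\!]_{\tau_t}),\alpha_{\rm c}^\prime(\nu_t\cdot[\![u^{\varepsilon_k}_t]\!])$ strongly convergent in $L^2$, so passing to the limit in \eqref{3.4} produces \eqref{5.4}. For \eqref{5.5}: $\langle\lambda_t,\nu_t\cdot[\![u_t]\!]\rangle_{\Sigma_t}\le0$ by the signs, while writing $\langle\lambda^{\varepsilon_k}_t,\nu_t\cdot[\![u^{\varepsilon_k}_t]\!]\rangle_{\Sigma_t}=\int_{\Sigma_t}\beta_{\varepsilon_k}(s^k)[s^k]^+\,dS-\int_{\Sigma_t}\beta_{\varepsilon_k}(s^k)[s^k]^-\,dS$ with $s^k=\nu_t\cdot[\![u^{\varepsilon_k}_t]\!]$ shows the first integral tends to $0$ by \eqref{3.2} and the second is $\ge0$, so $\liminf_k$ of this pairing is $\ge0$; since it converges to $\langle\lambda_t,\nu_t\cdot[\![u_t]\!]\rangle_{\Sigma_t}$ (weak-$\star$ against strong), the pairing is $0$. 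Testing \eqref{5.4} with $u-u_t$ for $u\in K(\Omega_t)$ and using \eqref{5.5} recovers the VI \eqref{2.13}; a formal integration by parts in \eqref{5.4}, compared with \eqref{2.14}, gives \eqref{5.6}. Finally, subtracting \eqref{5.4} from its $\varepsilon_k$-version and choosing test functions through the right inverse bounds $\|\lambda^{\varepsilon_k}_t-\lambda_t\|_{H^{1/2}(\Sigma_t)^\star}$ by a constant times $\|u^{\varepsilon_k}_t-u_t\|_{H^1(\Omega\setminus\Sigma_t)^d}$ plus $L^2$-differences of the (uniformly continuous) coefficients, hence \eqref{5.11}.

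For part~(ii) I would use \eqref{5.1} to extract $v^{\varepsilon_k}_t\rightharpoonup v_t$ weakly in $H^1(\Omega\setminus\Sigma_t)^d$ and pass to the limit in \eqref{3.15}: the coefficients $\nabla^2\alpha_{\rm f}([\![r u^{\varepsilon_k}_t]\!]_{\tau_t})$ and $\alpha_{\rm c}^{\prime\prime}(\nu_t\cdot[\![r u^{\varepsilon_k}_t]\!])$ converge strongly in $L^2$ uniformly in $r$ (dominated convergence, via the first convergence in \eqref{5.10}), $v^{\varepsilon_k}_t$ enters linearly, and the penalty part equals $\langle\mu^{\varepsilon_k}_t,\nu_t\cdot[\![v]\!]\rangle_{\Sigma_t}\to\langle\mu_t,\nu_t\cdot[\![v]\!]\rangle_{\Sigma_t}$; this gives \eqref{5.7} and, simultaneously, \eqref{5.12}. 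The compatibility \eqref{5.8} follows from the fundamental theorem of calculus $\lambda^\varepsilon_t-\beta_\varepsilon(0)=\int_0^1\beta^\prime_\varepsilon(\nu_t\cdot[\![r u^\varepsilon_t]\!])\,(\nu_t\cdot[\![u^\varepsilon_t]\!])\,dr$ as in \eqref{3.9}: pairing with $\nu_t\cdot[\![v^\varepsilon_t]\!]$ makes both sides the same symmetric double integral, so $\langle\lambda^\varepsilon_t-\beta_\varepsilon(0),\nu_t\cdot[\![v^\varepsilon_t]\!]\rangle_{\Sigma_t}=\langle\mu^\varepsilon_t,\nu_t\cdot[\![u^\varepsilon_t]\!]\rangle_{\Sigma_t}$, and passing to the limit ($\lambda^{\varepsilon_k}_t\to\lambda_t$ strongly in $H^{1/2}(\Sigma_t)^\star$ against $\nu_t\cdot[\![v^{\varepsilon_k}_t]\!]\rightharpoonup\nu_t\cdot[\![v_t]\!]$, and $\mu^{\varepsilon_k}_t\rightharpoonup\mu_t$ $\star$-weakly against $\nu_t\cdot[\![u^{\varepsilon_k}_t]\!]\to\nu_t\cdot[\![u_t]\!]$ strongly) yields \eqref{5.8}. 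Integration by parts in \eqref{5.7} gives \eqref{5.9}, and part~(iii) is the bookkeeping of \eqref{5.10}--\eqref{5.12} collected along the way.

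I expect the main obstacle to be the compactness of the adjoint multiplier $\mu^\varepsilon_t$: because $\beta^\prime_\varepsilon$ is of order $1/\varepsilon$ there is no $L^2$- or measure-control on $\mu^\varepsilon_t$ that survives the limit, and the only handle is the uniform $H^{1/2}(\Sigma_t)^\star$-bound read off from the adjoint equation \eqref{3.15} itself through the bounded right inverse of the normal-jump trace; everything downstream then relies only on weak-$\star$ compactness and on pairing weak-$\star$ limits with strongly convergent traces. A secondary delicate point is the strong $H^1$-convergence $u^{\varepsilon_k}_t\to u_t$, which rests on the relaxed complementarity and compliance inequalities \eqref{3.2} and on $u_t\in K(\Omega_t)$, and which is indispensable both for passing to the limit in the nonlinear coefficients of \eqref{3.15} and for establishing the complementarity $\langle\lambda_t,\nu_t\cdot[\![u_t]\!]\rangle_{\Sigma_t}=0$.
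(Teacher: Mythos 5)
Your proposal is correct and follows essentially the same route as the paper: pass to the limit along the subsequence of Lemma~\ref{lem4} in the penalty equation \eqref{3.4} and the adjoint equation \eqref{3.15}, identify $\lambda_t$ and $\mu_t$ as weak-$\star$ limits of the penalty terms, obtain the sign and complementarity conditions from $\beta_\varepsilon\le0$ and the relaxed complementarity \eqref{3.2}, prove strong $H^1$-convergence of $u^{\varepsilon_k}_t$ from the energy identity, and deduce the compatibility \eqref{5.8} from the fundamental-theorem-of-calculus representation \eqref{3.9}. The only (harmless) deviations are of bookkeeping: you establish the strong convergence of $u^{\varepsilon_k}_t$ and explicit uniform $H^{1/2}(\Sigma_t)^\star$-bounds on the multipliers \emph{before} passing to the limit equations, whereas the paper passes to the limit first using only the weak convergences \eqref{5.2} (the nonlinear coefficients being handled by uniform continuity and trace compactness) and proves the strong convergence afterwards in part~(iii).
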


\begin{proof}
(i)
Taking the limit in \eqref{3.4} with the help of the weak convergence
$u^{\varepsilon_k}_t\rightharpoonup u_t$ in \eqref{5.2} we get
\begin{multline}\label{5.13}
\lim_{\varepsilon_k\to0} \int_{\Sigma_t} \beta_{\varepsilon_k}
(\nu_t\cdot[\![u^{\varepsilon_k}_t]\!])\, (\nu_t\cdot[\![u]\!]) \,dS_x
=\int_{\Gamma^{\rm N}_t} g\cdot u \,dS_x
-\int_{\Omega\setminus\Sigma_t} \sigma(u_t)\cdot \epsilon(u) \,dx\\
-\int_{\Sigma_t} \bigl\{ \nabla\alpha_{\rm f}([\![u_t]\!]_{\tau_t})
\cdot [\![u]\!]_{\tau_t} +\alpha_{\rm c}^\prime (\nu_t\cdot[\![u_t]\!])\,
(\nu_t\cdot[\![u]\!])\bigr\} \,dS_x =:\langle \lambda_t, \nu_t\cdot[\![u]\!] \rangle_{\Sigma_t}.
\end{multline}
This implies  the $\star$-weak convergence
$\beta_{\varepsilon_k}(\nu_t\cdot[\![u^{\varepsilon_k}_t]\!])\rightharpoonup \lambda_t$
in $H^{1/2}(\Sigma_t)^\star$, equation \eqref{5.4}, and $\lambda_t\le0$
in \eqref{5.5} due to $\beta_{\varepsilon_k}\le0$ in  Lemma~\ref{lem0}.
Testing \eqref{5.13} with $u =u^{\varepsilon_k}_t$ and using \eqref{3.2} such that
\begin{equation*}
\int_{\Sigma_t} \beta_{\varepsilon_k} (\nu_t\cdot[\![u^{\varepsilon_k}_t]\!])\,
(\nu_t\cdot[\![u^{\varepsilon_k}_t]\!]) \,dS_x \ge\frac{1}{\varepsilon_k} \int_{\Sigma_t}
(\bigl[\nu_t\cdot[\![u^{\varepsilon_k}_t]\!] \bigr]^-)^2 \,dS_x  -2 \varepsilon_k K_{\beta}
\ge -2 \varepsilon_k K_{\beta}\to0,
\end{equation*}
after passage $\varepsilon_k\to0$, we get in the limit
$\langle \lambda_t, \nu_t\cdot[\![u_t]\!] \rangle_{\Sigma_t}\ge0$.
On the other hand we have
$\langle \lambda_t, \nu_t\cdot[\![u_t]\!] \rangle_{\Sigma_t}\le0$
because $\lambda_t\le0$ and the non-penetration $\nu_t\cdot[\![u_t]\!]\ge0$,
which together lead to the equality in \eqref{5.5}.
Substituting $\lambda_t$ with the expression \eqref{5.6} at $\Sigma_t$
we derive the VI \eqref{2.13} and its boundary value formulation \eqref{2.14}.
Thus, $u_t\in K(\Omega_t)$ yields a solution of the cohesive crack problem.

(ii)
The limit of the adjoint equation \eqref{3.15} using the convergences in \eqref{5.2} is
\begin{multline}\label{5.14}
\lim_{\varepsilon_k\to0} \int_{\Sigma_t} \int_0^1 \beta^\prime_{\varepsilon_k}
(\nu_t\cdot[\![r u^{\varepsilon_k}_t]\!])\, (\nu_t\cdot[\![v]\!])\,
(\nu_t\cdot[\![v^{\varepsilon_k}_t]\!]) \,dr \,dS_x\\
=\int_{\Gamma^{\rm O}_t} (u_t -z)\cdot v \,dS_x
-\int_{\Omega\setminus\Sigma_t} \sigma(v)\cdot \epsilon(v_t) \,dx
-\int_{\Sigma_t} \int_0^1 \Bigl\{ \Bigl( \nabla^2 \alpha_{\rm f}
([\![r u_t]\!]_{\tau_t})\, [\![v]\!]_{\tau_t} \Bigr) \cdot [\![v_t]\!]_{\tau_t}\\
+\alpha_{\rm c}^{\prime\prime} (\nu_t\cdot[\![r u_t]\!])
\,(\nu_t\cdot[\![v]\!]) (\nu_t\cdot[\![v_t]\!])\Bigr\} \,dS_x
=:\langle \mu_t, \nu_t\cdot[\![v]\!] \rangle_{\Sigma_t}.
\end{multline}
The convergence in \eqref{5.14} implies \eqref{5.12} and the adjoint equation \eqref{5.7}.
Derivation of the boundary value relations \eqref{5.9} is standard.
According to \eqref{3.9} we have
\begin{equation*}
\langle \beta_{\varepsilon_k} (\nu_t\cdot[\![u^{\varepsilon_k}_t]\!]),
\nu_t\cdot[\![v^{\varepsilon_k}_t]\!] \rangle_{\Sigma_t}
=\langle \int_0^1 \beta^\prime_{\varepsilon_k} (\nu_t\cdot[\![r u^{\varepsilon_k}_t]\!])
\,(\nu_t\cdot[\![u^{\varepsilon_k}_t]\!]) \,dr
+\beta_\epsilon(0), \nu_t\cdot[\![v^{\varepsilon_k}_t]\!] \rangle_{\Sigma_t},
\end{equation*}
hence based on \eqref{5.11} and \eqref{5.12} we derive in the limit the compatibility
equation \eqref{5.8}.

(iii)
The weak convergences in \eqref{5.10} are proved in Lemma~\ref{lem4}.
To justify the strong convergence $u^{\varepsilon_k}_t -u_t\to0$,
we subtract \eqref{5.4} from \eqref{3.4}, test the difference with
$u =u^{\varepsilon_k}_t -u_t$ and rearrange the terms as follows
\begin{multline}\label{5.15}
\int_{\Omega\setminus\Sigma_t} \sigma(u^{\varepsilon_k}_t -u_t)\cdot
\epsilon(u^{\varepsilon_k}_t -u_t) \,dx =-\int_{\Sigma_t} \bigl\{
\bigl( \nabla\alpha_{\rm f}([\![u^{\varepsilon_k}_t]\!]_{\tau_t}) -\nabla\alpha_{\rm f}
([\![u_t]\!]_{\tau_t}) \bigr)\cdot [\![u^{\varepsilon_k}_t -u_t]\!]_{\tau_t}\\
+\bigl( [\alpha_{\rm c}^\prime +\beta_{\varepsilon_k}] (\nu_t\cdot[\![u^{\varepsilon_k}_t]\!])
-[\alpha_{\rm c}^\prime +\beta_{\varepsilon_k}] (\nu_t\cdot[\![u_t]\!]) \bigr)
(\nu_t\cdot[\![u^{\varepsilon_k}_t -u_t]\!])\bigr\} \,dS_x\\
-\langle \beta_{\varepsilon_k} (\nu_t\cdot[\![u_t]\!]) -\lambda_t,
\nu_t\cdot[\![u^{\varepsilon_k}_t -u_t]\!] \rangle_{\Sigma_t}.
\end{multline}
Using the monotony of $\beta_{\varepsilon_k}$ and the uniform boundedness
$-1<\beta_{\varepsilon_k}(0)\le \beta_{\varepsilon_k} (\nu_t\cdot[\![u_t]\!]) \le0$
for $\nu_t\cdot[\![u_t]\!]\ge0$, the strong convergence in \eqref{5.10} follows
upon taking the limit in \eqref{5.15} as $\varepsilon_k\to0$, see \eqref{5.2}.
Consequently, from \eqref{3.4} and \eqref{5.13} we conclude the strong convergence in \eqref{5.11}.
This finishes the proof.
\qed
\end{proof}

Based on assertion (iii) of Theorem~\ref{theo5} we get the following.

\begin{corollary}[Limit optimization problems]\label{corol3}
For the fixed $(\lambda_t, \mu_t)\in (H^{1/2}(\Sigma_t)^\star)^2$
from Theorem~\ref{theo5} and Lagrangian $\mathcal{L}$ from \eqref{5.3},
the pair $(u_t, v_t)\in V(\Omega_t)^2$ solving optimality conditions
\eqref{5.4}, \eqref{5.5} and \eqref{5.7} satisfies the primal problem:
\begin{equation}\label{5.16}
\mathcal{L}(u_t, u_t, v;\Omega_t) -\langle \lambda_t, \nu_t\cdot[\![v]\!] \rangle_{\Sigma_t}
\le \mathcal{L}(u_t, u_t, v_t;\Omega_t) -\langle \lambda_t,
\nu_t\cdot[\![v_t]\!] \rangle_{\Sigma_t}
\end{equation}
for all $v\in V(\Omega_t)$, and the dual problem:
\begin{multline}\label{5.17}
\mathcal{L}(u_t, u_t, v_t;\Omega_t) -\langle \mu_t, \nu_t\cdot[\![u_t]\!] \rangle_{\Sigma_t}
-\langle \beta_\epsilon(0), \nu_t\cdot[\![v_t]\!] \rangle_{\Sigma_t}\\
\le \mathcal{L}(u_t, u, v_t;\Omega_t) -\langle \mu_t, \nu_t\cdot[\![u]\!] \rangle_{\Sigma_t}
-\langle \beta_\epsilon(0), \nu_t\cdot[\![v_t]\!] \rangle_{\Sigma_t}
\quad\text{for all } u\in V(\Omega_t).
\end{multline}
By the virtue of compatibility \eqref{5.8}, the corresponding
optimal value function for the objective $\mathcal{J}$ in \eqref{1.3}
has the equivalent representations using the adjoint equation as follows:
\begin{multline}\label{5.18}
\mathcal{J}(u_t; \Omega_t) =\mathcal{L}(u_t, u_t, v_t;\Omega_t)
-\langle \lambda_t, \nu_t\cdot[\![v_t]\!] \rangle_{\Sigma_t}\\
=\mathcal{L}(u_t, u_t, v_t;\Omega_t)-\langle \mu_t, \nu_t\cdot[\![u_t]\!] \rangle_{\Sigma_t}
-\langle \beta_\epsilon(0), \nu_t\cdot[\![v_t]\!] \rangle_{\Sigma_t}.
\end{multline}
\end{corollary}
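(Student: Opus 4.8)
The plan is to exploit that, for the fixed linearization point $u_t$ and the fixed multipliers $(\lambda_t,\mu_t)$ furnished by Theorem~\ref{theo5}, the functional $\mathcal{L}(u_t,\cdot,\cdot;\Omega_t)$ from \eqref{5.3} is affine in its dual slot and convex quadratic in its primal slot, so that each of the two saddle-point inequalities collapses to a first-order optimality identity that is already available, while the value identity \eqref{5.18} follows by testing those identities against $v_t$ and $u_t$ and invoking the compatibility relation \eqref{5.8}.

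First I would establish the primal inequality \eqref{5.16}. The map $v\mapsto \mathcal{L}(u_t,u_t,v;\Omega_t)-\langle\lambda_t,\nu_t\cdot[\![v]\!]\rangle_{\Sigma_t}$ is affine in $v$, since in \eqref{5.3} the $v$-dependence enters only through $-\int_{\Omega\setminus\Sigma_t}\sigma(u_t)\cdot\epsilon(v)\,dx$, the term $\int_{\Gamma^{\rm N}_t}g\cdot v\,dS_x$, and the terms linear in $[\![v]\!]$ along $\Sigma_t$. Using the fundamental theorem of calculus \eqref{3.9} (with the $\beta_\varepsilon$-terms absent) to collapse $\int_0^1\nabla^2\alpha_{\rm f}([\![r u_t]\!]_{\tau_t})[\![u_t]\!]_{\tau_t}\,dr+\nabla\alpha_{\rm f}(0)=\nabla\alpha_{\rm f}([\![u_t]\!]_{\tau_t})$ and the analogous identity for $\alpha_{\rm c}'$, the directional derivative of this affine functional in a direction $w\in V(\Omega_t)$ equals the left-hand side of \eqref{5.4} tested with $w$ minus $\int_{\Gamma^{\rm N}_t}g\cdot w\,dS_x$, and hence vanishes by \eqref{5.4}. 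An affine functional whose derivative vanishes identically is constant, so \eqref{5.16} holds, in fact with equality.

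Next I would treat the dual inequality \eqref{5.17}. The additive term $\langle\beta_\varepsilon(0),\nu_t\cdot[\![v_t]\!]\rangle_{\Sigma_t}$ is the same on both sides and cancels, so it suffices to prove that $u_t$ minimizes $u\mapsto\mathcal{L}(u_t,u,v_t;\Omega_t)-\langle\mu_t,\nu_t\cdot[\![u]\!]\rangle_{\Sigma_t}$ over $V(\Omega_t)$. This functional is convex and quadratic in $u$: the only genuinely quadratic term in \eqref{5.3} is $\frac{1}{2}\int_{\Gamma^{\rm O}_t}|u-z|^2\,dS_x$, which is convex, while all remaining $u$-dependent terms (in particular the interface terms, whose $\int_0^1\,\cdots\,dr$ weights are frozen at $u_t$) are affine in $u$. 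It therefore suffices to check that its directional derivative at $u=u_t$ vanishes; computing this derivative in a direction $w$ and using the symmetry of the elasticity tensor $C$ (so that $\int_{\Omega\setminus\Sigma_t}\sigma(w)\cdot\epsilon(v_t)\,dx=\int_{\Omega\setminus\Sigma_t}\sigma(v_t)\cdot\epsilon(w)\,dx$) and of the Hessian $\nabla^2\alpha_{\rm f}$, one recovers precisely the adjoint equation \eqref{5.7} tested with $v=w$, so the derivative is zero. Convexity then promotes this stationarity to global minimality, which yields \eqref{5.17}.

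For the value identity \eqref{5.18} I would first evaluate $\mathcal{L}(u_t,u_t,v_t;\Omega_t)$ from \eqref{5.3}, once more using \eqref{3.9} to rewrite the $\Sigma_t$-integrand in the form $\nabla\alpha_{\rm f}([\![u_t]\!]_{\tau_t})\cdot[\![v_t]\!]_{\tau_t}+\alpha_{\rm c}'(\nu_t\cdot[\![u_t]\!])(\nu_t\cdot[\![v_t]\!])$, and then test the primal equation \eqref{5.4} with the admissible function $v_t\in V(\Omega_t)$; this shows that every term except $\frac{1}{2}\int_{\Gamma^{\rm O}_t}|u_t-z|^2\,dS_x+\rho|\Sigma_t|=\mathcal{J}(u_t;\Omega_t)$ cancels against $-\langle\lambda_t,\nu_t\cdot[\![v_t]\!]\rangle_{\Sigma_t}$, which gives the first equality. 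The second equality, after cancelling the common term $\mathcal{L}(u_t,u_t,v_t;\Omega_t)$, reduces to $\langle\lambda_t-\beta_\varepsilon(0),\nu_t\cdot[\![v_t]\!]\rangle_{\Sigma_t}=\langle\mu_t,\nu_t\cdot[\![u_t]\!]\rangle_{\Sigma_t}$, which is exactly the compatibility relation \eqref{5.8}. The argument is largely bookkeeping; the point that needs care, and which I would flag as the main obstacle, is the dual inequality, where one must keep track of the sign and symmetry of every bilinear contribution so that the directional derivative of $\mathcal{L}$ in $u$ coincides with the adjoint bilinear form of \eqref{5.7}, and confirm that freezing the $\int_0^1\,\cdots\,dr$ weights at $u_t$ really does leave the interface terms affine in $u$, so that a stationary point is genuinely a global minimizer.
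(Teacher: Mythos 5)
Your proposal is correct, but it takes a different route from the paper. The paper's proof obtains \eqref{5.16} and \eqref{5.17} by passing to the limit $\varepsilon_k\to0$ in the $\varepsilon$-regularized saddle-point problem \eqref{3.11}, using the convergences \eqref{5.10}--\eqref{5.12} established in Theorem~\ref{theo5}~(iii), and then derives \eqref{5.18} from the $\varepsilon$-dependent representation \eqref{3.12} together with the compatibility \eqref{5.8}. You instead verify the two saddle-point inequalities directly from the limit optimality conditions: the primal inequality because $v\mapsto\mathcal{L}(u_t,u_t,v)-\langle\lambda_t,\nu_t\cdot[\![v]\!]\rangle_{\Sigma_t}$ is affine with vanishing derivative by \eqref{5.4} (hence constant, so \eqref{5.16} holds with equality, exactly as in the $\varepsilon$-case), and the dual inequality because the functional is convex quadratic in $u$ with stationarity at $u_t$ given by the adjoint equation \eqref{5.7}; the first equality in \eqref{5.18} then follows by testing \eqref{5.4} with $v_t$, and the second from \eqref{5.8}. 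Your argument is more self-contained and matches the corollary's stated hypotheses more closely, since it uses only that $(u_t,v_t,\lambda_t,\mu_t)$ satisfies \eqref{5.4}, \eqref{5.5}, \eqref{5.7} and \eqref{5.8}, without re-invoking the approximation process; the paper's limit argument, on the other hand, makes explicit that the saddle-point structure of \eqref{3.11} survives the passage $\varepsilon\to0$, which is the conceptual point the section is driving at. The one step you rightly flag — that freezing the $\int_0^1(\cdot)\,dr$ weights at $u_t$ leaves the interface terms affine in $u$, so that stationarity plus convexity of the boundary misfit yields global minimality — does go through, and your use of the symmetry of $C$ and of $\nabla^2\alpha_{\rm f}$ to identify the directional derivative with the adjoint bilinear form is the same mechanism the paper uses at the $\varepsilon$-level in Theorem~\ref{theo3}.
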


\begin{proof}
Indeed, taking the limit $\varepsilon_k\to0$ in the saddle-point problem \eqref{3.11}
with the Lagrangian $\mathcal{\tilde L}^{\varepsilon_k}$ from \eqref{3.13}, and
observing \eqref{5.10}--\eqref{5.12}, the inequalities \eqref{5.16}, \eqref{5.17} follow.
From the  $\varepsilon$-dependent representation \eqref{3.12}
of the optimal value function $\mathcal{J}$ and by using the compatibility \eqref{5.8}
we derive the limit formula \eqref{5.18}.
\qed
\end{proof}

We finish by noting the difficulty that, in general, we can pass to the limit
as $\varepsilon\to0^+$ neither in the term $\int_0^1 \beta^\prime_\varepsilon (\nu_t\cdot
[\![r u^\varepsilon_t]\!]) \,dr$ in the Lagrangian $\tilde{\mathcal{L}}^\varepsilon$
in \eqref{3.10}, nor in the term $\beta^\prime_\varepsilon (\nu_t\cdot[\![u^\varepsilon_t]\!])$
in the shape derivative ${\textstyle\frac{\partial}{\partial s}}
\tilde{\mathcal{L}}^\varepsilon$ in \eqref{4.20} and \eqref{4.26}.
Otherwise, if
\begin{equation*}
\eta_t =\lim_{\varepsilon\to0^+} \int_0^1
\beta^\prime_\varepsilon (\nu_t\cdot [\![r u^\varepsilon_t]\!]) \,dr
\end{equation*}
exists, then the compatibility properties
$\lambda_t =(\nu_t\cdot[\![u_t]\!]) \eta_t +\beta_{\varepsilon}(0)$
and $\mu_t =(\nu_t\cdot[\![v_t]\!]) \eta_t$ which are stronger than \eqref{5.8} hold.
For a factorization of $\lambda_t$ and $\mu_t$,  additional solution
regularity, as in the particular case of obstacle problems, could be helpful,
see \cite{Bar/84,HK/09,MP/84}.

\section{Shape optimization of breaking line}\label{sec6}

We apply the theoretical results to a numerical example in 2D.

As a true shape to be identified within an admissibility set $\{\Sigma_t\}$
we take the piecewise-linear line
\begin{equation*}
\Sigma = \{x_1\in(0,1),\, x_2 = \psi(x_1) \},\quad \psi(x_1) =\min(0.3, x_1/3+0.1),
\end{equation*}
which breaks the rectangle $\Omega=(0,1)\times(0,0.5)$ into two parts $\Omega^\pm$.
Let the boundary $\partial \Omega$ be split symmetrically into the fixed Dirichlet
part $\Gamma^{\rm D} =\{x_1\in\{0,1\},\, x_2 \in(0,0.5) \}$ and
the Neumann part $\Gamma^{\rm N} =\{x_1 \in(0,1),\, x_2\in\{0,0.5\} \}$.
For an isotropic elastic body occupying $\Omega$ we set the material parameters:
Young modulus $E_{\rm Y} = 73000$ (mPa), Poisson ratio $\nu_{\rm P} = 0.34$,
and the corresponding Lam\'e parameters $\mu_{\rm L} = E_{\rm Y}/ (2(1+\nu_{\rm P}))$,
$\lambda_{\rm L} = 2 \mu_{\rm L} \nu_{\rm P}/ (1-2 \nu_{\rm P})$.
For the matrix $C$ of isotropic elastic coefficients the stress-strain relations are
\begin{equation*}
\sigma_{ij} =2\mu_{\rm L} \epsilon_{ij} +\lambda_{\rm L}
(\epsilon_{11} +\epsilon_{22}) \delta_{ij},\quad i,j=1,2.
\end{equation*}
We rely on the approximation of $\nu_t\cdot[\![u]\!]$ by $[\![u]\!]_2 :=[\![u_2]\!]$,
and $[\![u]\!]_{\tau_t} =[\![u]\!]_1 \tau_t$ with $[\![u]\!]_1 :=[\![u_1]\!]$
at $\Sigma_t$, which is reasonable for flat shapes.
For a friction function in one variable $\alpha_{\rm f}(s) =F_{\bf b} \sqrt{\delta^2 +s^2}$
such that $\nabla \alpha_{\rm f} = \tau_t \alpha_{\rm f}^\prime$,
and $\alpha_{\rm c}(s) =K_{\rm c} s/ (\kappa +|s|)$,
applying to the body the traction force
\begin{equation*}
g_1 = 0,\quad g_2(x) = (1-7 x_1/4)(4 x_2-1) \mu_{\rm L},
\end{equation*}
according to Theorem~\ref{theo1} there exists a solution $z\in H^1(\Omega\setminus\Sigma)^2$
such that $z=0$ on $\Gamma^{\rm D}$, $[\![z]\!]_2\ge0\text{ on }\Sigma$,
and satisfying the VI \eqref{2.13}:
\begin{multline}\label{6.1}
\int_{\Omega\setminus\Sigma} \sigma(z)\cdot \epsilon(u -z) \,dx
+\int_{\Sigma} \bigl\{ \alpha_{\rm f}^\prime ([\![z]\!]_1)\, [\![u -z]\!]_1\\
+\alpha_{\rm c}^\prime([\![z]\!]_2)\, [\![u -z]\!]_2\bigr\} \,dS_x
\ge \int_{\Gamma^{\rm N}} g\cdot (u -z) \,dS_x
\end{multline}
for all test functions $u\in H^1(\Omega\setminus\Sigma)^2$
such that $u=0$ on $\Gamma^{\rm D}$ and $[\![u]\!]_2\ge0\text{ on }\Sigma$.
Let the observation boundary be $\Gamma^{\rm O} =\Gamma^{\rm N}$.
We insert the solution $z$ of \eqref{6.1} as a measurement into the objective function
$\mathcal{J}$ in \eqref{1.3} and consider the shape optimization problem:
find $\Sigma_t$ from the feasible set $\mathfrak{S}
=\{x\in\Omega:\, x_1\in(0, 1), x_2=\psi(x_1)\in(0, 0.5),\, \psi\in C^{0,1}(0,1)\}$ such that
\begin{equation}\label{6.2}
\min_{\Sigma_t\in\mathfrak{S}}\mathcal{J}(u_t; \Omega_t)
=\frac{1}{2} \int_{\Gamma^{\rm O}_t} |u_t - z|^2 \,dS_x +\rho |\Sigma_t|,
\quad\text{where $u_t$ solves \eqref{2.13}}.
\end{equation}
Evidently, the trivial minimum in \eqref{6.2} is attained as $\Sigma_t =\Sigma$
and $u_t=z$.
To avoid the inverse crime, we use two different meshes for $z$,
and for $u_t$ when solving the inverse problem.

Now we discretize the problem.
For fixed $t$, let $\Omega^1_{t,h}$,  $\Omega^2_{t,h}$ be triangular meshes
with grid size $h>0$ in $\Omega^1_{t}$,  $\Omega^2_{t}$,
which are compatible at the interface such that $\Sigma_{t,h}
:=\Sigma_{t}\cap\partial\Omega^1_{t,h} =\Sigma_{t}\cap\partial\Omega^2_{t,h}$.
At the interface $\Sigma_{t,h}$ the nonlinear functions are set:
friction $\alpha_{\rm f}$ from \eqref{2.4} with $F_{\rm b} =10^{-5}$ (mPa);
cohesion $\alpha_{\rm c}$ from \eqref{2.6} with $m=1$,
$K_{\rm c} =10^{-3}$ (mPa$\cdot$m), $\kappa =10^{-2}$ (m).
The parameters $\delta$, $h$ are assumed sufficiently small
such that we rely on the discretization:
\begin{multline}\label{6.3}
(\alpha_{\rm f})_h(s) =F_{\rm b} |s|,\quad (\alpha_{\rm f}^\prime)_h(s)
=F_{\rm b}\, {\rm sgn}(s),\quad (\alpha_{\rm f}^{\prime\prime})_h(s) =0;\\
(\alpha_{\rm c})_h(s) =\frac{K_{\rm c}}{\kappa} \min(\kappa, |s|),\quad
(\alpha_{\rm c}^\prime)_h(s) =\frac{K_{\rm c}}{\kappa} \, {\rm ind}\{|s|<\kappa\},
\quad (\alpha_{\rm c}^{\prime\prime})_h(s) =0.
\end{multline}
\begin{figure}[hbt!]
\begin{center}
\epsfig{file=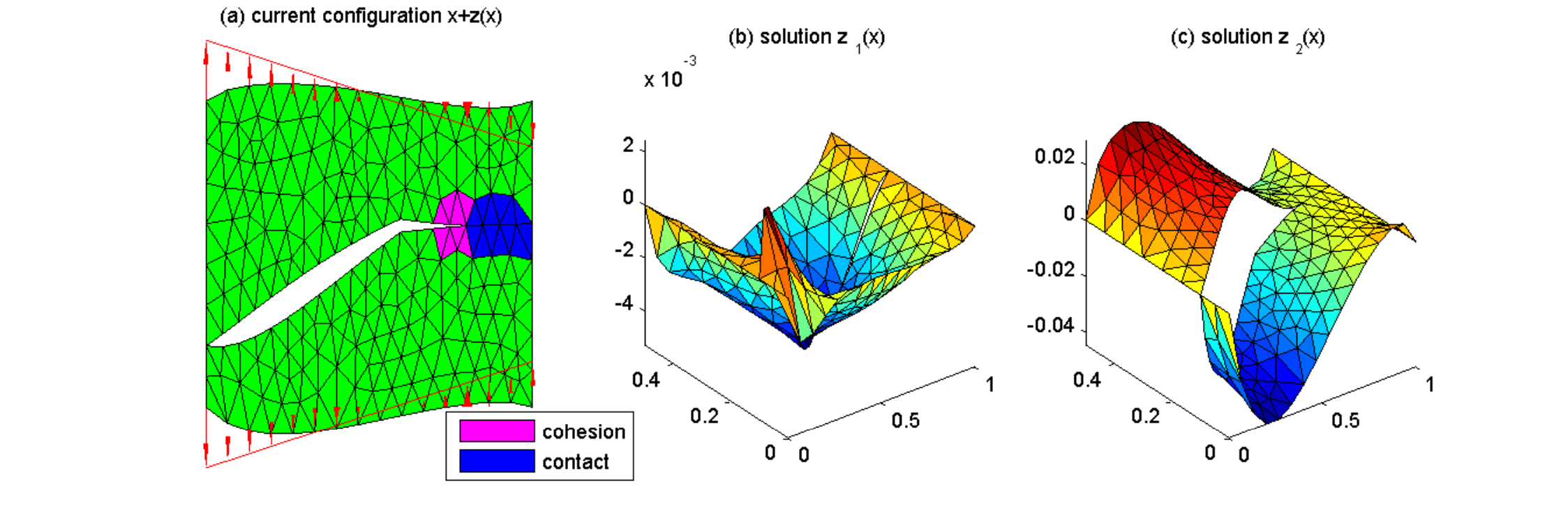,width=\textwidth,angle=0}
\caption{Computed true solution $z_h$ to \eqref{6.1}
within current configuration (a); componentwise in (b), (c).}
\label{fig_zcontact}
\end{center}
\end{figure}
After FE-discretization of problem \eqref{6.1} according to \eqref{6.3}
on a grid of size $h =10^{-2}$, we solve it by a primal-dual active set (PDAS)
iterative algorithm developed in \cite{HKK/11}.
The reference numerical solution $z_h$ obtained after 4 iterations
is plotted in Figure~\ref{fig_zcontact}.
In plot (a) we present the grid in the so-called current or deformed configuration $x+z(x)$ for
$x\in\Omega\setminus\Sigma$ under the traction force $g$ prescribed at $\Gamma^{\rm N}$.
Here we observe an open part of $\Sigma$ which is the complement to
the cohesion part (where $[\![z]\!]_2<\kappa$) with contact (where $[\![z]\!]_2=0$)
marked by colors in finite elements adjacent to the interface.
In plots (b), (c) of Figure~\ref{fig_zcontact} the solution components $(z_h)_1$, $(z_h)_2$
in the reference configuration $\Omega\setminus\Sigma$ are depicted.

According to Theorem~\ref{theo2} we approximate the VI \eqref{2.13}
by the $\varepsilon$-regularized cohesive crack problem \eqref{3.4}.
For sufficiently small $\varepsilon$ fixed, the compliance $\beta_{\varepsilon}$
from \eqref{3.3} is discretized as
\begin{equation}\label{6.4}
(\beta_{\varepsilon})_h(s) =\frac{1}{\varepsilon} \min(0, s),\quad
(\beta_{\varepsilon}^\prime)_h(s) =\frac{1}{\varepsilon} \, {\rm ind}\{s<0\}.
\end{equation}
Let $V_{t,h}$ be the finite element (FE) space of piecewise-linear functions such that
\begin{equation*}
V_{t,h}\subset V(\Omega_{t,h}) =\{u\in  H^1(\Omega^+_{t,h})^2\cap
H^1(\Omega^-_{t,h})^2\vert \quad u=0\text{ on }\Gamma^{\rm D}\}.
\end{equation*}
Then the discretization of the penalty equation \eqref{3.4} becomes:
find $u^\varepsilon_{t,h}\in V_{t,h}$ such that
\begin{multline}\label{6.5}
\int_{\Omega\setminus\Sigma_{t,h}} \sigma(u^\varepsilon_{t,h})\cdot
\epsilon(u_h) \,dx +\int_{\Sigma_{t,h}} \bigl\{ (\alpha_{\rm f}^\prime)_h
\bigl([\![u^\varepsilon_{t,h}]\!]_1\bigr)\cdot [\![u_h]\!]_1\\
+[(\alpha_{\rm c}^\prime)_h +(\beta_\varepsilon)_h] \bigl([\![u^\varepsilon_{t,h}]\!]_2
\bigr)\, [\![u_h]\!]_2\bigr\} \,dS_x =\int_{\Gamma^{\rm N}} g\cdot u_h \,dS_x,
\end{multline}
and due to \eqref{6.3} the discrete adjoint equation \eqref{3.15} reads:
find $v^\varepsilon_{t,h}\in V_{t,h}$ such that
\begin{multline}\label{6.6}
\int_{\Omega\setminus\Sigma_{t,h}} \sigma(v_h)\cdot \epsilon(v^\varepsilon_{t,h}) \,dx
+\int_{\Sigma_{t,h}} \int_0^1 (\beta^\prime_\varepsilon)_h
([\![r u^\varepsilon_{t,h}]\!]_2) \,[\![v_h]\!]_2 [\![v^\varepsilon_{t,h}]\!]_2 \,dr \,dS_x\\
=\int_{\Gamma^{\rm N}} (u^\varepsilon_{t,h} -z_h)\cdot v_h \,dS_x
\end{multline}
for all test functions $u_h, v_h\in V_{t,h}$.

After solving problems \eqref{6.5} and \eqref{6.6}, since $\Gamma^{\rm D}$
and $\Gamma^{\rm N} =\Gamma^{\rm O}$ are fixed in this example,
according to Corollary~\ref{corol2} we calculate $\mathcal{D}^\varepsilon_3$
at the moving boundary $\Sigma_{t,h}$, and $\mathcal{D}_1$
at $\Sigma_{t,h}\cap\Gamma^{\rm D}$:
\begin{multline}\label{6.7}
(\mathcal{D}_1)_{t,h}
=[\![ \nabla (u^\varepsilon_{t,h})^\top \sigma(v^\varepsilon_{t,h})
+\nabla (v^\varepsilon_{t,h})^\top \sigma(u^\varepsilon_{t,h}) ]\!] \tau_t (2x_1-1),\\
(\mathcal{D}^\varepsilon_3)_{t,h} =[\![\sigma(u^\varepsilon_{t,h})\cdot
\epsilon(v^\varepsilon_{t,h})]\!]
+\varkappa_{t} \bigl( \rho -(p_{\rm f})_{t,h} -(p^\varepsilon_{\rm c})_{t,h} \bigr)
-\nu_t\cdot \bigl( (\nabla p_{\rm f})_{t,h} +(\nabla p^\varepsilon_{\rm c})_{t,h} \bigr),
\end{multline}
where $\rho =1/ \mu_{\rm L}$ is set, $(q_{\rm f})_{t,h} =(q^\varepsilon_{\rm c})_{t,h} =0$
by the virtue of \eqref{6.3}, \eqref{6.4}.
Relying on a flat shape approximation we take $\nabla\nu_t =\nabla\tau_t =0$ and
\begin{multline}\label{6.8}
(p_{\rm f})_{t,h} =(\alpha_{\rm f}^\prime)_h
([\![u^\varepsilon_{t,h}]\!]_1)\, [\![v^\varepsilon_{t,h}]\!]_1,\quad
(p^\varepsilon_{\rm c})_{t,h} =[(\alpha_{\rm c}^\prime)_h +(\beta_\varepsilon)_h]
([\![u^\varepsilon_{t,h}]\!]_2)\, [\![v^\varepsilon_{t,h}]\!]_2,\\
(\nabla p_{\rm f})_{t,h} =[\![\nabla v^\varepsilon_{t,h}]\!]^\top \tau_t\,
(\alpha_{\rm f}^\prime)_h ([\![u^\varepsilon_{t,h}]\!]_1),\\
(\nabla p^\varepsilon_{\rm c})_{t,h} =[\![\nabla v^\varepsilon_{t,h}]\!]^\top\nu_t\,
[(\alpha_{\rm c}^\prime)_h +(\beta_\varepsilon)_h] ([\![u^\varepsilon_{t,h}]\!]_2)
+[\![\nabla u^\varepsilon_{t,h}]\!]^\top \nu_t\, (\beta_\varepsilon^\prime)_h
([\![u^\varepsilon_{t,h}]\!]_2)\, [\![v^\varepsilon_{t,h}]\!]_2.
\end{multline}

The discrete velocity $\Lambda_{H}$ at interface $\Sigma_{t}$ is defined on
a coarse grid of size $H>0$.
According to Corollary~\ref{corol3} we get a descent direction by setting
$(\Lambda_{H})_1=0$ and
\begin{equation}\label{6.9}
(\Lambda_{H})_2 =\frac{k}{\sqrt{h}} (2x_1-1)\nu_t \cdot (\mathcal{D}_1)_{t,h}
\text{ at $\Sigma_{t,h}\cap\Gamma^{\rm D}$},\quad
(\Lambda_{H})_2 =-k (\mathcal{D}^\varepsilon_3)_{t,h}
\text{ at $\Sigma_{t,h}\setminus\Gamma^{\rm D}$},
\end{equation}
where the scaling $k = 0.1 h/ \|(\Lambda_{H})_2\|_{C(\overline{\Sigma_{t,h}})}$
is chosen, and the weight $1/\sqrt{h}$ at $\Gamma^{\rm D}$
was found empirically as in \cite{GKK/20}.
Based on formulas \eqref{6.7}--\eqref{6.9} we formulate the shape optimization
algorithm of breaking line identification for the discretized version of \eqref{3.8} .

\begin{algo}[breaking line identification]\label{algo1}
\parbox{0cm}{}
\begin{itemize}
\item[\bf(0)]
Initialize the constant grid function $\psi^{(0)}_H =0.25$ at points $s_H\in[0,1]$.
Determine the line segment
$\Sigma^{(0)} = \{x_1\in(0,1),\, x_2 =\psi^{(0)}(x_1)\}$, where
$\psi^{(0)}$ is the linear interpolate of $\psi^{(0)}_H$; set $n=0$.\\
\item[\bf(1)]
Set the interface $\Sigma_{t,h} =\Sigma^{(n)}$ and construct triangulations
$\Omega^1_{t,h}$, $\Omega^2_{t,h}$;
find solutions $u^\varepsilon_{t,h}$, $v^\varepsilon_{t,h}$
of the discrete penalty and adjoint equations \eqref{6.5}, \eqref{6.6}.\\
\item[\bf(2)]
Calculate a velocity $(\Lambda_{H})_2$ by formula \eqref{6.9};
update the grid function
\begin{equation}\label{6.10}
\psi^{(n+1)}_H =\psi^{(n)}_H + (\Lambda_{H})_2
\quad\text{at points $s_H\in[0,1]$}.
\end{equation}
From linear interpolation $\psi^{(n+1)}$ of $\psi^{(n+1)}_H$ determine the piecewise-linear segment
\begin{equation}\label{6.11}
\Sigma^{(n+1)} = \{x_1\in(0,1),\, x_2 =\psi^{(n+1)}(x_1)\}.
\end{equation}
\item[\bf(3)]
If stopping criterion holds, then STOP; else set $n=n+1$ and go to Step~{\rm\bf(1)}.
\end{itemize}
\end{algo}

For 11 equidistant points $s_H$ as $H=0.1$, the numerical result of Algorithm~\ref{algo1}
after $\#n=200$ iterations (the stopping criterion) is depicted in Figure~\ref{fig_shapecontact}.
\begin{figure}[hbt!]
\begin{center}
\epsfig{file=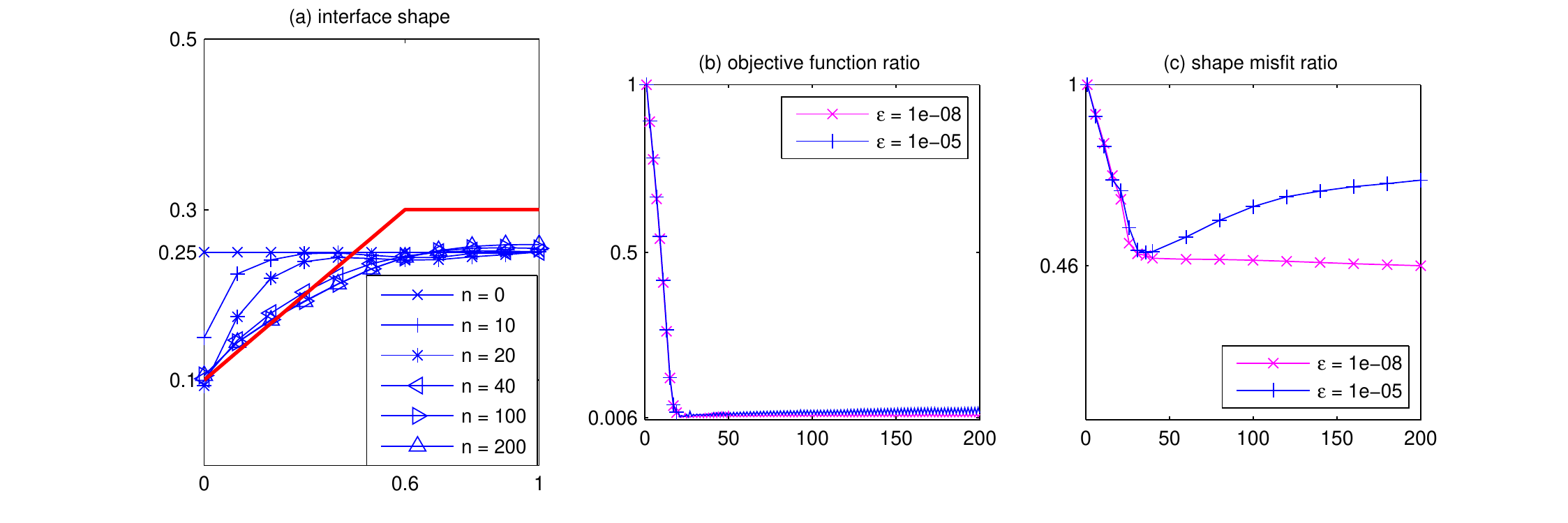,width=\textwidth,angle=0}
\caption{Iterations of $\Sigma^{(n)}$ (a);
objective function ratio $\mathcal{J}^{(n)}/\mathcal{J}^{(0)}$ (b);
shape error ratio (c).}
\label{fig_shapecontact}
\end{center}
\end{figure}
In plot (a) the selected iterations $n=0,10,20,40,100,200$ of $\Sigma^{(n)}$
from \eqref{6.11} are drawn in $\Omega$ in comparison with
the true interface $\Sigma$ (the thick solid line).
In plot (b) of Figure~\ref{fig_shapecontact} we plot the ratio
$\mathcal{J}^{(n)}/\mathcal{J}^{(0)}$ of the objective function during iterations
of $\Sigma_{t,h} =\Sigma^{(n)}$, where we recall
\begin{equation}\label{6.12}
\mathcal{J}^{(n)}(u^\varepsilon_{t,h}; \Omega\setminus\Sigma^{(n)})
=\frac{1}{2} \int_{\Gamma^{\rm O}} |u^\varepsilon_{t,h} - z_h|^2 \,dS_x
+\rho |\Sigma^{(n)}|\quad\text{subject to }\eqref{6.5}.
\end{equation}
The computed ratio attains as minimum $0,6\%$.
In plot (c) of Figure~\ref{fig_shapecontact} the ratio of shape error
$\|\Sigma^{(n)} -\Sigma\|/ \|\Sigma^{(0)} -\Sigma\|$ is plotted versus $n\in[0,200]$,
where  according to \eqref{6.10}
\begin{equation}\label{6.13}
\|\Sigma^{(n)} -\Sigma\| :=\|\psi^{(n)}-\psi\|_{C([0,1])}.
\end{equation}
Here the accuracy of shape identification attains only $46\%$.
It is worth noting that the computation is presented for small penalty parameter
$\varepsilon =10^{-8}$, while insufficiently small value $\varepsilon =10^{-5}$
causes some increase of the ratio curves after reaching the minimum;
see Figure~\ref{fig_shapecontact} (b), (c).

From the simulation we conclude the following.
In Figure~\ref{fig_shapecontact} (a) it can be observed that the left part of
curve $\Sigma$, where the constraints are inactive (see Figure~\ref{fig_zcontact} (a)),
is recovered well by the identification Algorithm~\ref{algo1},
whereas the right part of interface, where either contact or cohesion occurs,
the initialized $\Sigma^{(0)}$ is almost not modified during the iteration.

To remedy the hidden part, we apply to the same physical and geometrical configuration
the traction force $g_2(x) = (1-5 x_1/4)(4 x_2-1) \mu_{\rm L}$,
which is more stretching than the one from Figure~\ref{fig_zcontact} (a).
Because of that, the whole $\Sigma$ is open, neither contact nor cohesion occur
at the interface (see Figure~\ref{fig_shapenocontact} (a)).
The corresponding result of Algorithm~\ref{algo1} is depicted in Figure~\ref{fig_shapenocontact}.
\begin{figure}[hbt!]
\begin{center}
\epsfig{file=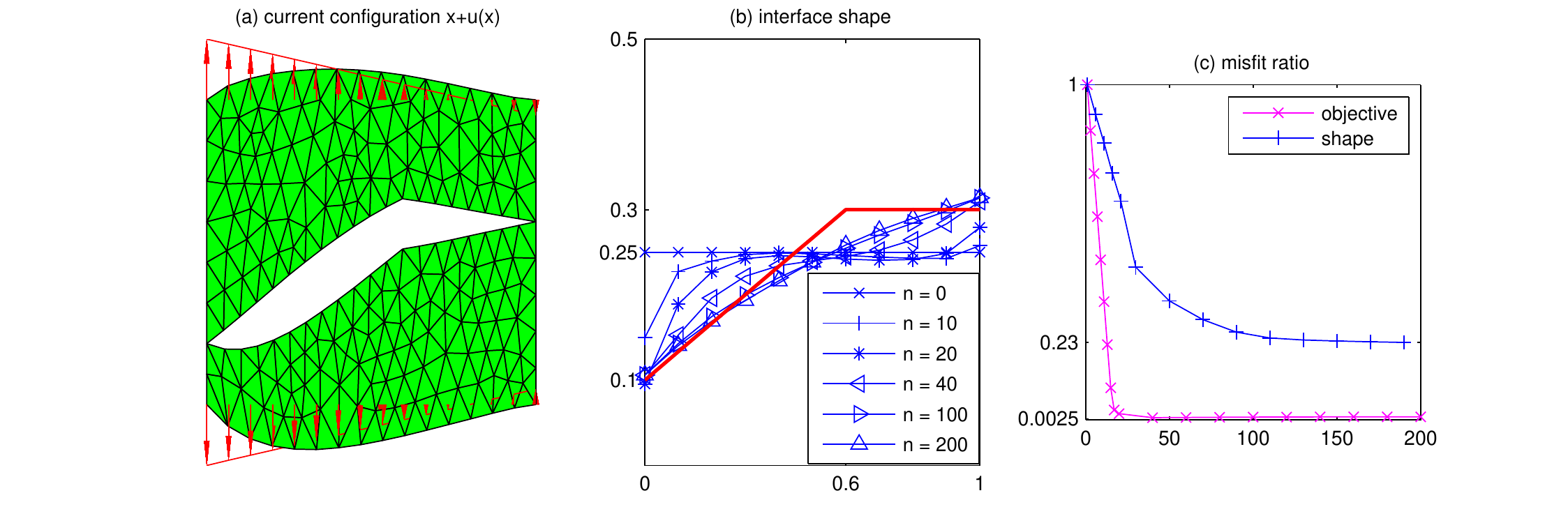,width=\textwidth,angle=0}
\caption{The true solution $z_h$ (a);
iterations of $\Sigma^{(n)}$ (b); objective function ratio and shape error ratio (c).}
\label{fig_shapenocontact}
\end{center}
\end{figure}
Here plot (b) presents the selected iterations of $\Sigma^{(n)}$, and
plot (c) shows the objective function ratio $\mathcal{J}^{(n)}/\mathcal{J}^{(0)}$
together with the shape error ratio $\|\Sigma^{(n)} -\Sigma\|/ \|\Sigma^{(0)} -\Sigma\|$.
The former ratio attains the minimum $0,25\%$, and the latter one $23\%$ of accuracy.
Now we see in Figure~\ref{fig_shapenocontact} (b) that the whole curve $\Sigma$
is recovered well compared to that from Figure~\ref{fig_shapecontact} (a).

\section{Conclusions}\label{sec7}

The Barenblatt's crack model assuming cohesion at a breaking line
is stated as the variational inequality due to the non-penetration condition
and penalized using smooth Lavrentiev's approximation.
For the geometry-dependent least-square function describing misfit
of the solution from a boundary measurement,
the expression of shape derivative is derived in an analytical form.
On its basis, from our numerical simulation we make a conclusion that
the suggested breaking line identification algorithm is consistent
within the setup of destructive physical analysis (DPA).

\paragraph{\small\bf Data availability statement}
{\small Data sharing not applicable to this article as no datasets were generated or analysed during the current study.}

\begin{acknowledgements}
The research was supported by the ERC advanced grant 668998 (OCLOC) under the EU’s H2020 research program.
\end{acknowledgements}

\appendix
\section{Proof of Lemma~\ref{lem2}}\label{A}

As $s\to0$, the following asymptotic expansion of terms in \eqref{4.11}--\eqref{4.13}
holds (see e.g. \cite[Chapter~2]{SZ/92}):
\begin{multline}\label{A1}
z\circ \phi_s =z +s \nabla z \Lambda +{\rm o}(s),\quad
g\circ \phi_s =g +s \nabla g \Lambda +{\rm o}(s),\quad
C\circ \phi_s =g +s\nabla C \Lambda +{\rm o}(s),\\
\nabla\phi_s^{-1}\circ \phi_s =I -s\nabla \Lambda +{\rm o}(s),\quad
E(\nabla\phi_s^{-1}\circ \phi_s, \tilde{u}) =\epsilon(\tilde{u})
-s E(\nabla \Lambda, \tilde{u}) +{\rm o}(s),\\
\omega^{\rm d}_s =1 +s\, {\rm div} \Lambda +{\rm o}(s),\quad
\omega^{\rm b}_s =1 +s\, {\rm div}_{\tau_t} \Lambda +{\rm o}(s)\\
\nu_{t+s}\circ \phi_s =\nu_{t} +s \nabla \nu_t \Lambda +{\rm o}(s),\quad
[\![\tilde{u}]\!]_{\tilde{\tau}_{t+s}} =[\![\tilde{u}]\!]_{\tau_t}
+s [\![\tilde{u}]\!]_{\nabla \tau_t \Lambda} +{\rm o}(s)
\end{multline}
for $\tilde{u}\in V(\Omega_{t})$.
It is worth noting that $\nabla \nu_t \Lambda$ and
$\nabla \tau_t \Lambda$ from \eqref{4.17} are just a notation used for short,
which does not require existence of the gradients here.
The tangential divergence ${\rm div}_{\tau_t} \Lambda$ is defined in \eqref{4.18}.

Inserting representations \eqref{A1} into the objective
$\tilde{\mathcal{J}}(s, \tilde{u}; \Omega_{t})$ and the perturbed Lagrangian
$\tilde{\mathcal{L}}^\varepsilon (s, u^\varepsilon_t, \tilde{u}, \tilde{v}; \Omega_{t})$
given by \eqref{4.9}, \eqref{4.10}, we derive their expansions
\eqref{4.14}, \eqref{4.15} with respect to $s$.
The asymptotic term ${\textstyle\frac{\partial}{\partial s}}
\tilde{\mathcal{L}}^\varepsilon(0,  u^\varepsilon_t, \tilde{u}, \tilde{v}; \Omega_{t})$
is from \eqref{4.16} at $\tau=0$ (implying that $\Lambda|_t =\Lambda$).
Since $\Lambda|_{t+\tau}$ and $\nabla\Lambda|_{t+\tau}$ are continuous functions
of the argument $t+\tau$, the partial derivative
$\tau\mapsto {\textstyle\frac{\partial}{\partial s}}
\tilde{\mathcal{L}}^\varepsilon (\tau,\,\cdot\,)$ in \eqref{4.16} is continuous.
This finishes the proof.

\section{Proof of Lemma~\ref{lem1}}\label{D}

The first inequality in \eqref{4.8} implies the optimality condition
$\partial_v \tilde{\mathcal{L}}^\varepsilon(s, u^\varepsilon_t,
\tilde{u}^\varepsilon_{t+s}, \tilde{v}^\varepsilon_{t+s}; \Omega_{t}) =0$, that is
\begin{multline}\label{B1}
\int_{\Omega\setminus\Sigma_{t}} \bigl( (C\circ \phi_s) E(\nabla\phi_s^{-1}\circ \phi_s,
\tilde{u}^\varepsilon_{t+s})\cdot E(\nabla\phi_s^{-1}\circ \phi_s, \tilde{v}) \bigr) \,\omega^{\rm d}_s dx\\
+\int_{\Sigma_t} \Bigl\{ \Bigl( \int_0^1 \nabla^2\alpha_{\rm f}
([\![r u^\varepsilon_t]\!]_{\tau_t}) [\![\tilde{u}^\varepsilon_{t+s}]\!]_{\tilde{\tau}_{t+s}} \,dr
+\nabla\alpha_{\rm f}(0) \Bigr) \cdot [\![\tilde{v}]\!]_{\tilde{\tau}_{t+s}}
+\Bigl( \int_0^1[\alpha_{\rm c}^{\prime\prime} +\beta^\prime_\varepsilon]
(\nu_t\cdot [\![r u^\varepsilon_t]\!]) (\tilde{\nu}_{t+s}
\cdot[\![\tilde{u}^\varepsilon_{t+s}]\!]) \,dr\\
+[\alpha_{\rm c}^\prime +\beta_\varepsilon](0) \Bigr)
(\tilde{\nu}_{t+s} \cdot[\![\tilde{v}]\!]) \Bigr\} \,\omega^{\rm b}_s dS_x
=\int_{\Gamma^{\rm N}_{t}}(g\circ \phi_s)\cdot \tilde{v} \,\omega^{\rm b}_s dS_x
\quad\text{for all } \tilde{v}\in V(\Omega_{t}).
\end{multline}
According to the asymptotic representation \eqref{4.15} and the mean value theorem,
using the operator $A_\varepsilon$ from \eqref{3.15}
it is possible to express the equation \eqref{B1} in the form
\begin{multline}\label{D1}
\langle A_{\varepsilon}(u^\varepsilon_t) \tilde{u}^\varepsilon_{t+s}, \tilde{v} \rangle
+\int_{\Sigma_t} \bigl( \nabla \alpha_{\rm f}(0)\cdot [\![\tilde{v}]\!]_{\tau_t}
+[\alpha_{\rm c}^\prime +\beta_\varepsilon](0) (\nu_t\cdot[\![\tilde{v}]\!]) \bigr) \,dS_x\\
=\int_{\Gamma^{\rm N}_t} g\cdot \tilde{v} \,dS_x +s R_v(\alpha^v_s,
\tilde{u}^\varepsilon_{t+s}, \tilde{v})
\quad\text{for all $\tilde{v}\in V(\Omega_{t})$},\quad \alpha^v_s\in(0, s),
\end{multline}
with a bounded, bilinear residual $R_v: V(\Omega_{t})^2\mapsto\mathbb{R}$.
Under assumption \eqref{3.14} the operator $A_{\varepsilon}(u^\varepsilon_t)$ 
is coercive (see \eqref{3.17}) and weakly continuous.
Thus by the Brouwer fixed point theorem, for small $s$ the variational equation \eqref{D1}
has a unique solution $\tilde{u}^\varepsilon_{t+s}\in V(\Omega_{t})$.

Similarly, the optimality condition
$\partial_u \tilde{\mathcal{L}}^\varepsilon(s, u^\varepsilon_t,
\tilde{u}^\varepsilon_{t+s}, \tilde{v}^\varepsilon_{t+s}; \Omega_{t}) =0$ reads as
\begin{multline}\label{B4}
\int_{\Omega\setminus\Sigma_{t}} \bigl( (C\circ \phi_s)
E(\nabla\phi_s^{-1}\circ \phi_s, \tilde{u})\cdot E(\nabla\phi_s^{-1}\circ \phi_s,
\tilde{v}^\varepsilon_{t+s}) \bigr) \,\omega^{\rm d}_s dx\\
+\int_{\Sigma_t} \int_0^1 \Bigl\{ \Bigl( \nabla^2 \alpha_{\rm f}
([\![r u^\varepsilon_t]\!]_{\tau_t}) [\![\tilde{u}]\!]_{\tilde{\tau}_{t+s}} \Bigr)
\cdot [\![\tilde{v}^\varepsilon_{t+s}]\!]_{\tilde{\tau}_{t+s}}
+[\alpha_{\rm c}^{\prime\prime} +\beta^\prime_\varepsilon]
(\nu_t\cdot[\![r u^\varepsilon_t]\!]) \, (\tilde{\nu}_{t+s}\cdot[\![\tilde{u}]\!])
(\tilde{\nu}_{t+s} \cdot [\![\tilde{v}^\varepsilon_{t+s}]\!]) \Bigr\} \,\omega^{\rm b}_s  dr\, dS_x\\
=\int_{\Gamma^{\rm O}_{t}} (\tilde{u}^\varepsilon_{t+s} -z\circ \phi_s)
\cdot \tilde{u} \,\omega^{\rm b}_s dS_x\quad\text{for all $\tilde{u}\in V(\Omega_{t})$}.
\end{multline}
The second inequality in \eqref{4.8} admits the decomposition
for a weight $\alpha^u_s\in(0, s)$:
\begin{equation}\label{D2}
\langle A_{\varepsilon}(u^\varepsilon_t) \tilde{u}, \tilde{v}^\varepsilon_{t+s} \rangle
=\int_{\Gamma^{\rm O}_t} (\tilde{u}^\varepsilon_{t+s} -z)\cdot \tilde{u} \,dS_x
+s R_u(\alpha^u_s, \tilde{v}^\varepsilon_{t+s}, \tilde{u})
\quad\text{for all $\tilde{u}\in V(\Omega_{t})$},
\end{equation}
with bounded bilinear $R_u: V(\Omega_{t})^2\mapsto\mathbb{R}$,
thus possesses a unique solution $\tilde{v}^\varepsilon_{t+s}\in V(\Omega_{t})$,
for $s$ small enough.

\section{Proof of Lemma~\ref{lem3}}\label{B}

\paragraph{\small\bf Uniform estimate of $\tilde{u}^\varepsilon_{t+s}$.}

Testing the variational equation \eqref{B1} with $\tilde{v} =\tilde{u}^\varepsilon_{t+s}$
and applying the asymptotic expansion \eqref{D1} it follows
\begin{multline}\label{B2}
\int_{\Omega\setminus\Sigma_{t}} \sigma(\tilde{u}^\varepsilon_{t+s})\cdot
\epsilon(\tilde{u}^\varepsilon_{t+s}) \,dx
+\int_{\Sigma_t} \Bigl\{ \Bigl( \int_0^1 \nabla^2\alpha_{\rm f}
([\![r u^\varepsilon_t]\!]_{\tau_t}) [\![\tilde{u}^\varepsilon_{t+s}]\!]_{\tau_t} \,dr
+\nabla\alpha_{\rm f}(0) \Bigr) \cdot [\![\tilde{u}_{t+s}]\!]_{\tau_t}\\
+\Bigl( \int_0^1 [\alpha_{\rm c}^{\prime\prime} +\beta^\prime_\varepsilon]
(\nu_t\cdot [\![r u^\varepsilon_t]\!]) (\nu_t \cdot[\![\tilde{u}^\varepsilon_{t+s}]\!]) \,dr
+[\alpha_{\rm c}^\prime +\beta_\varepsilon](0) \Bigr) (\nu_t \cdot[\![\tilde{u}_{t+s}]\!])
\Bigr\} \,dS_x\\
=\int_{\Gamma^{\rm N}_{t}} g\cdot \tilde{u}^\varepsilon_{t+s} \,dS_x
+s R_v(\alpha^v_s, \tilde{u}^\varepsilon_{t+s}, \tilde{u}^\varepsilon_{t+s}).
\end{multline}
We apply to \eqref{B2}  the Cauchy--Schwarz, Korn--Poincare \eqref{2.15}
and trace inequalities \eqref{2.17}.
By the virtue of boundedness of $\nabla \alpha_{\rm f}$, $\nabla^2 \alpha_{\rm f}$,
$\alpha_{\rm c}^\prime$, $\alpha_{\rm c}^{\prime\prime}$, $\beta_\varepsilon$
and $\beta^\prime_\varepsilon\ge0$ in \eqref{2.3}, \eqref{2.5}, \eqref{3.1},
we derive the estimate:
\begin{equation}\label{B3}
(K_{\rm f c 2} -C_1 |s|) \|\tilde{u}^\varepsilon_{t+s}\|_{H^1(\Omega\setminus\Sigma_t)^d}
\le \sqrt{2} K_{\rm tr} \bigl( \|g\|_{L^2(\Gamma^{\rm N}_t)^d}
+(K_{\rm f 1} +K_{\rm c 1} -\beta_\epsilon(0)) \sqrt{|\Sigma_t|} \bigr) +C_1 |s|,
\quad C_1>0,
\end{equation}
uniform in $\varepsilon$ and $s\le s_0$ for sufficiently small $s_0>0$,
where $K_{\rm f c 2} := K_{\rm KP} -(K_{\rm f 2} +K_{\rm c 2}) 2 K_{\rm tr}^2 >0$
due to the assumption \eqref{3.14}.

\paragraph{\small\bf Uniform estimate of $\tilde{v}^\varepsilon_{t+s}$.}

We test the variational equation \eqref{B4} with $\tilde{u} =\tilde{v}^\varepsilon_{t+s}$.
and apply \eqref{D2}:
\begin{multline}\label{B5}
\int_{\Omega\setminus\Sigma_{t}} \sigma(\tilde{v}^\varepsilon_{t+s})\cdot
\epsilon(\tilde{v}^\varepsilon_{t+s}) \,dx +\int_{\Sigma_t} \int_0^1
\Bigl\{ \Bigl( \nabla^2 \alpha_{\rm f} ([\![r u^\varepsilon_t]\!]_{\tau_t})
[\![\tilde{v}^\varepsilon_{t+s}]\!]_{\tau_t} \Bigr)
\cdot [\![\tilde{v}^\varepsilon_{t+s}]\!]_{\tau_t}\\
+[\alpha_{\rm c}^{\prime\prime} +\beta^\prime_\varepsilon]
(\nu_t\cdot[\![r u^\varepsilon_t]\!])
(\nu_t\cdot[\![\tilde{v}^\varepsilon_{t+s}]\!])^2 \Bigr\} \,dr \,dS_x
=\int_{\Gamma^{\rm O}_{t}} (\tilde{u}^\varepsilon_{t+s} -z)\cdot
\tilde{v}^\varepsilon_{t+s} \,dS_x
+s R_u(\alpha^u_s, \tilde{v}^\varepsilon_{t+s}, \tilde{v}^\varepsilon_{t+s}).
\end{multline}
With the help of Cauchy--Schwarz, Korn--Poincare and trace inequalities \eqref{2.15},
\eqref{2.17}, due to the bondedness of $\nabla^2 \alpha_{\rm f}$,
$\alpha_{\rm c}^{\prime\prime}$, $\beta^\prime_\varepsilon\ge0$ in \eqref{2.3},
\eqref{2.5}, \eqref{3.1}, from \eqref{B5} we derive the uniform estimate:
there exists $C_2>0$ such that
\begin{equation}\label{B6}
(K_{\rm f c 2} -C_2 |s|) \|\tilde{v}^\varepsilon_{t+s}\|_{H^1(\Omega\setminus\Sigma_t)^d}
\le \sqrt{2} K_{\rm tr} \|\tilde{u}^\varepsilon_{t+s} -z\|_{L^2(\Gamma^{\rm O}_t)^d}
+C_2 |s|.
\end{equation}
Thus, for small $|s|< K_{\rm f c 2}/\min(C_1, C_2)$
relations \eqref{B3} and \eqref{B6} together give
\begin{equation}\label{B7}
\|\tilde{u}^\varepsilon_{t+s}\|_{H^1(\Omega\setminus\Sigma_t)^d}
+\|\tilde{v}^\varepsilon_{t+s}\|_{H^1(\Omega\setminus\Sigma_t)^d}
\le K,\quad K\ge0.
\end{equation}

\paragraph{\small\bf Weak convergence of
$(\tilde{u}^\varepsilon_{t+s}, \tilde{v}^\varepsilon_{t+s})$.}

By the virtue of the uniform estimate \eqref{B7}, there exists
a subsequence $s_k\to0$ as $k\to\infty$, and a weak accumulation point
$(\tilde{u}^\varepsilon_t, \tilde{v}^\varepsilon_t)\in V(\Omega_{t})^2$ such that
\begin{equation}\label{B8}
(\tilde{u}^\varepsilon_{t+s_k}, \tilde{v}^\varepsilon_{t+s_k})\rightharpoonup
(\tilde{u}^\varepsilon_t, \tilde{v}^\varepsilon_t)\quad\text{weakly in
$H^1(\Omega\setminus\Sigma_t)^{2d}, H^{1/2}(\partial\Omega^\pm_t)^{2d}$ as $s_k\to0$}.
\end{equation}
By the compactness of embedding of the boundary traces it follows that
\begin{equation}\label{B9}
(\tilde{u}^\varepsilon_{t+s_k}, \tilde{v}^\varepsilon_{t+s_k})\to
(\tilde{u}^\varepsilon_t, \tilde{v}^\varepsilon_t)\quad
\text{strongly in $L^2(\partial\Omega^\pm_t)^{2d}$ as $s_k\to0$}.
\end{equation}
Next we take the limit in \eqref{B1} and \eqref{B4} with $s=s_k$  as $k\to \infty$. Due to the uniform
continuity of $\nabla \alpha_{\rm f}$, $\alpha_{\rm c}^\prime, \beta_\varepsilon$
and $\nabla^2 \alpha_{\rm f}$, $\alpha_{\rm c}^{\prime\prime}, \beta^\prime_\varepsilon$, and
using \eqref{3.9} we arrive at the variational equations \eqref{3.4} and \eqref{3.15}, respectively.
Therefore, $(\tilde{u}^\varepsilon_t, \tilde{v}^\varepsilon_t)
=(u^\varepsilon_t, v^\varepsilon_t)$.

\paragraph{\small\bf Strong convergence of $\tilde{u}^\varepsilon_{t+s}$.}

With the help of asymptotic relation \eqref{B2} and equation \eqref{3.4}
with $u =u^\varepsilon_t$, using the Korn--Poincare inequality \eqref{2.15},
we rearrange the terms as follows
\begin{multline}\label{B10}
K_{\rm KP} \|\tilde{u}^\varepsilon_{t+s} -u^\varepsilon_t
\|^2_{H^1(\Omega\setminus\Sigma_t)^d}
\le \int_{\Omega\setminus\Sigma_{t}} \sigma(\tilde{u}^\varepsilon_{t+s}
-u^\varepsilon_t)\cdot \varepsilon(\tilde{u}^\varepsilon_{t+s} -u^\varepsilon_t) \,dx\\
=\int_{\Omega\setminus\Sigma_{t}} \bigl\{ \sigma(\tilde{u}^\varepsilon_{t+s})
\cdot \varepsilon(\tilde{u}^\varepsilon_{t+s})
-\sigma(u^\varepsilon_t)\cdot \varepsilon(u^\varepsilon_t)
-2\sigma(\tilde{u}^\varepsilon_{t+s} -u^\varepsilon_t)\cdot
\varepsilon(u^\varepsilon_t)\bigr\} \,dx
=\int_{\Gamma^{\rm N}_{t}} g\cdot (\tilde{u}^\varepsilon_{t+s} -u^\varepsilon_t) \,dS_x\\
-2 \int_{\Omega\setminus\Sigma_{t}} \sigma(\tilde{u}^\varepsilon_{t+s}
-u^\varepsilon_t)\cdot \varepsilon(u^\varepsilon_t) \,dx
-\int_{\Sigma_t} \bigl\{ \bigl( \nabla\alpha_{\rm f}
([\![\tilde{u}^\varepsilon_{t+s}]\!]_{\tau_t}) \cdot
[\![\tilde{u}^\varepsilon_{t+s}]\!]_{\tau_t} -\nabla\alpha_{\rm f}
([\![u^\varepsilon_t]\!]_{\tau_t}) \cdot [\![u^\varepsilon_t]\!]_{\tau_t} \bigr)\\
+\bigl( [\alpha_{\rm c}^\prime +\beta_\varepsilon] (\nu_t \cdot
[\![\tilde{u}^\varepsilon_{t+s}]\!]) (\nu_t \cdot[\![\tilde{u}^\varepsilon_{t+s}]\!]
-[\alpha_{\rm c}^\prime +\beta_\varepsilon] (\nu_t \cdot [\![u^\varepsilon_t]\!])
(\nu_t \cdot[\![u^\varepsilon_t]\!] \bigr) \bigr\} \,dS_x +{\rm O}(|s|).
\end{multline}
Taking the limit in \eqref{B10} as $s_k\to0$, due to the convergence established in
\eqref{B8} and \eqref{B9},  we conclude that
\begin{equation}\label{B11}
\|\tilde{u}^\varepsilon_{t+s_k} -u^\varepsilon_t \|_{H^1(\Omega\setminus\Sigma_t)^d}
\to 0\quad\text{as $s_k\to0$}.
\end{equation}

\paragraph{\small\bf Strong convergence of $\tilde{v}^\varepsilon_{t+s}$.}

We subtract equation \eqref{3.15} from \eqref{B4} and use asymptotic expansions
\eqref{A1} such that
\begin{multline}\label{B12}
\int_{\Omega\setminus\Sigma_{t}} \varepsilon(\tilde{v})\cdot
\sigma(\tilde{v}^\varepsilon_{t+s} -v^\varepsilon_t)\,dx
=\int_{\Sigma_t} \int_0^1 \Bigl\{ \Bigl( \nabla^2 \alpha_{\rm f}
([\![r \tilde{u}^\varepsilon_{t+s}]\!]_{\tau_t}) [\![\tilde{v}^\varepsilon_{t+s}]\!]_{\tau_t}
-\nabla^2 \alpha_{\rm f} ([\![r u^\varepsilon_t]\!]_{\tau_t})
[\![v^\varepsilon_t]\!]_{\tau_t} \Bigr) \cdot [\![\tilde{v}]\!]_{\tau_t}\\
+\bigl( [\alpha_{\rm c}^{\prime\prime} +\beta^\prime_\varepsilon]
(\nu_t\cdot[\![r \tilde{u}^\varepsilon_{t+s}]\!])
(\nu_t\cdot[\![\tilde{v}^\varepsilon_{t+s}]\!])
-[\alpha_{\rm c}^{\prime\prime} +\beta^\prime_\varepsilon]
(\nu_t\cdot[\![r u^\varepsilon_t]\!]) (\nu_t\cdot[\![v^\varepsilon_t]\!]) \bigr)
(\nu_t\cdot[\![\tilde{v}]\!]) \Bigr\} \,dr \,dS_x +{\rm O}(|s|).
\end{multline}
Applying to \eqref{B12} the Cauchy--Schwarz inequality, due to
the properties of $\nabla^2 \alpha_{\rm f}$, $\alpha_{\rm c}^{\prime\prime}$, $\beta^\prime_\varepsilon$ in \eqref{2.3}, \eqref{2.5}, \eqref{3.1},
we obtain the upper bound
\begin{multline}\label{B13}
\int_{\Omega\setminus\Sigma_{t}} \varepsilon(\tilde{v})\cdot
\sigma(\tilde{v}^\varepsilon_{t+s} -v^\varepsilon_t)\,dx
\le K_{\rm f 2} \| [\![\tilde{v}^\varepsilon_{t+s} -v^\varepsilon_t]\!]_{\tau_t}
\|_{L^2(\Sigma_t)^d} \|[\![\tilde{v}]\!]_{\tau_t}\|_{L^2(\Sigma_t)^d}\\
+\int_0^1 \Bigl\{ \bigl\| \nabla^2 \alpha_{\rm f}
([\![r \tilde{u}^\varepsilon_{t+s}) ]\!]_{\tau_t}
-\nabla^2 \alpha_{\rm f} ([\![r u^\varepsilon_t]\!]_{\tau_t})
\bigr\|_{L^2(\Sigma_t)^{d\times d}}
\|[\![v^\varepsilon_t]\!]_{\tau_t}\|_{L^4(\Sigma_t)^d}
\|[\![\tilde{v}]\!]_{\tau_t}\|_{L^4(\Sigma_t)^d}\\
+\bigl\| [\alpha_{\rm c}^{\prime\prime} +\beta^\prime_\varepsilon]
(\nu_t\cdot [\![r \tilde{u}^\varepsilon_{t+s}) ]\!]
-[\alpha_{\rm c}^{\prime\prime} +\beta^\prime_\varepsilon]
(\nu_t\cdot [\![r u^\varepsilon_t]\!])  \bigr\|_{L^2(\Sigma_t)}
\|\nu_t\cdot [\![v^\varepsilon_t]\!]\|_{L^4(\Sigma_t)}
\|\nu_t\cdot [\![\tilde{v}]\!]\|_{L^4(\Sigma_t)} \Bigr\} \,dr\\
+\bigl(K_{\rm c 2} +\frac{K_{\beta 1}}{\varepsilon}\bigr)
\| \nu_t\cdot [\![\tilde{v}^\varepsilon_{t+s} -v^\varepsilon_t]\!]
\|_{L^2(\Sigma_t)} \| \nu_t\cdot [\![\tilde{v}]\!]\|_{L^2(\Sigma_t)}
+C|s|, \quad C>0.
\end{multline}
By the Sobolev embedding theorem the continuity property holds:
\begin{equation}\label{B14}
\|u\|_{L^4(\partial\Omega^\pm_t)^d}\le K_{\rm emb}
\|u\|_{H^{1/2}(\partial\Omega^\pm_t)^d},\quad u\in H^1(\Omega^\pm_t)^d,
\quad d=2,3.
\end{equation}
Then \eqref{B14}, Korn--Poincare and trace inequalities \eqref{2.15}, \eqref{2.17},
together with convergences \eqref{B8}, \eqref{B9} guarantee that for fixed $\varepsilon$:
\begin{equation}\label{B15}
K_{\rm KP}\|\tilde{v}^\varepsilon_{t+s_k} -v^\varepsilon_t
\|_{H^1(\Omega\setminus\Sigma_t)^d}\le \sup_{\tilde{v}\in V(\Omega_{t})}
\frac{1}{\|\tilde{v}\|_{H^1(\Omega\setminus\Sigma_t)^d}}
\int_{\Omega\setminus\Sigma_{t}} \varepsilon(\tilde{v})\cdot
\sigma(\tilde{v}^\varepsilon_{t+s_k} -v^\varepsilon_t)\,dx
\to 0\text{ as $s_k\to0$}.
\end{equation}
The proof of  Lemma~\ref{lem3} is complete.

\section{Proof of Corollary~\ref{corol1}}\label{C}

Let $(u^\varepsilon_t, v^\varepsilon_t)\in H^2(\Omega^+_t)^{2d}\cap
H^2(\Omega^-_t)^{2d}$ be a solution to \eqref{3.4} and \eqref{3.15}.
We integrate by parts the domain integral over $\Omega\setminus\Sigma_{t}$
from \eqref{4.20} at $\tau =0$ so that
\begin{multline*}
I(\Omega\setminus\Sigma_{t})
:=-\int_{\Omega^\pm_{t}} \bigl( ( {\rm div} \Lambda C +\nabla C \Lambda)
\epsilon(u^\varepsilon_t)\cdot \epsilon(v^\varepsilon_t)
-\sigma(u^\varepsilon_t)\cdot E(\nabla \Lambda, v^\varepsilon_t)
-\sigma(v^\varepsilon_t)\cdot E(\nabla \Lambda, u^\varepsilon_t) \bigr) dx\\
=-\int_{\partial\Omega^\pm_{t}} \Lambda\cdot \bigl( n^\pm_t
\sigma(u^\varepsilon_t)\cdot \epsilon(v^\varepsilon_t)
-\nabla (u^\varepsilon_t)^\top \sigma(v^\varepsilon_t) n^\pm_t
-\nabla (v^\varepsilon_t)^\top \sigma(u^\varepsilon_t) n^\pm_t \bigr)\, dS_x
=\int_{\Sigma_{t}} \Lambda\cdot \bigl( \nu_t
[\![\sigma(u^\varepsilon_t)\cdot \epsilon(v^\varepsilon_t)]\!]\\
-[\![\nabla (u^\varepsilon_t)^\top \sigma(v^\varepsilon_t)]\!] \nu_t
-[\![\nabla (v^\varepsilon_t)^\top \sigma(u^\varepsilon_t)]\!] \nu_t \bigr)\, dS_x
+\int_{\Gamma^{\rm D}_{t}\cup \Gamma^{\rm N}_{t}} \!\!
\Lambda\cdot \bigl(  \nabla (u^\varepsilon_t)^\top \sigma(v^\varepsilon_t) n_t
+\nabla (v^\varepsilon_t)^\top \sigma(u^\varepsilon_t) n_t \bigr)\, dS_x,
\end{multline*}
where we use the assumption $n_t\cdot \Lambda =0$ at $\partial\Omega$.
Using boundary conditions from \eqref{3.5}, \eqref{3.16}
and the notation $\mathcal{D}_1$ from \eqref{4.24} it follows that
\begin{multline}\label{C1}
I(\Omega\setminus\Sigma_{t}) =\int_{\Sigma_{t}} \Lambda\cdot \Bigl( \nu_t
[\![\sigma(u^\varepsilon_t)\cdot \epsilon(v^\varepsilon_t)]\!]
-[\![\nabla v^\varepsilon_t]\!]^\top \bigl(
\nabla\alpha_{\rm f}([\![u^\varepsilon_t]\!]_{\tau_t}) +[\alpha_{\rm c}^\prime
+\beta_\varepsilon] (\nu_t\cdot[\![u^\varepsilon_t]\!])\, \nu_t \bigr)\\
-[\![\nabla u^\varepsilon_t]\!]^\top \int_0^1 \bigl( \nabla^2 \alpha_{\rm f}
([\![r u^\varepsilon_t]\!]_{\tau_t})  \, [\![v^\varepsilon_t]\!]_{\tau_t}
+[\alpha_{\rm c}^{\prime\prime} +\beta^\prime_\varepsilon]
(\nu_t\cdot[\![r u^\varepsilon_t]\!]) \,(\nu_t\cdot[\![v^\varepsilon_t]\!])
\,\nu_t \bigr) \,dr \Bigr) \, dS_x\\
+\int_{\Gamma^{\rm O}_{t}} \Lambda\cdot \bigl(
\nabla (u^\varepsilon_t)^\top( u^\varepsilon_t -z) \bigr) \,dS_x
+\int_{\Gamma^{\rm N}_{t}} \Lambda\cdot (\nabla (v^\varepsilon_t)^\top g) \,dS_x
+\int_{\Gamma^{\rm D}_{t}} \Lambda\cdot \mathcal{D}_1
(u^\varepsilon_t, v^\varepsilon_t) \,dS_x.
\end{multline}
After substitution of \eqref{C1} into \eqref{4.20}, the integrand
at $\Sigma_{t}$ is gathered in the expression:
\begin{multline}\label{C2}
I_{\Sigma_{t}} :=-{\rm div}_{\tau_t} \Lambda\, \bigl\{  \nabla\alpha_{\rm f}
([\![u^\varepsilon_t]\!]_{\tau_t}) \cdot[\![v^\varepsilon_t]\!]_{\tau_t}
+[\alpha_{\rm c}^\prime +\beta_\varepsilon] (\nu_t\cdot[\![u^\varepsilon_t]\!])\,
(\nu_t\cdot[\![v^\varepsilon_t]\!])\bigr\}\\ +\Lambda\cdot \bigl\{ \nu_t
[\![\sigma(u^\varepsilon_t)\cdot \epsilon(v^\varepsilon_t)]\!]
-\bigl( [\![\nabla v^\varepsilon_t]\!]^\top -(\nu_t\cdot  [\![v^\varepsilon_t]\!])
\nabla\nu_t^\top -\nabla \nu_t^\top [\![v^\varepsilon_t]\!] \nu_t^\top \bigr)
\nabla\alpha_{\rm f} ([\![u^\varepsilon_t]\!]_{\tau_t})\\
-\bigl( [\![\nabla v^\varepsilon_t]\!]^\top \nu_{t}
+\nabla \nu_t^\top [\![v^\varepsilon_t]\!] \bigr)\, [\alpha_{\rm c}^\prime
+\beta_\varepsilon] (\nu_t\cdot[\![u^\varepsilon_t]\!])
-\bigl( [\![\nabla u^\varepsilon_t]\!]^\top \nu_{t} +\nabla\nu_t^\top [\![u^\varepsilon_t]\!]
\bigr) \int_0^1 [\alpha_{\rm c}^{\prime\prime}
+\beta^\prime_\varepsilon] (\nu_t \cdot[\![r u^\varepsilon_t]\!])
(\nu_t \cdot[\![v^\varepsilon_t]\!]) \,dr\\
-\bigl( [\![\nabla u^\varepsilon_t]\!]^\top -(\nu_t\cdot  [\![u^\varepsilon_t]\!])
\nabla\nu_t^\top -\nabla \nu_t^\top [\![u^\varepsilon_t]\!] \nu_t^\top\bigr)
\int_0^1 \nabla^2\alpha_{\rm f} ([\![r u^\varepsilon_t]\!]_{\tau_t})
[\![v^\varepsilon_t]\!]_{\tau_t} \,dr \bigr\}.
\end{multline}
In order to combine like terms, we exploit the calculus
\begin{equation}\label{C3}
\Lambda\cdot \nabla(\xi\cdot\eta) =\Lambda\cdot (\nabla\xi^\top\eta
+\nabla\eta^\top\xi) =\eta\cdot \nabla\xi\Lambda +\xi\cdot \nabla\eta\Lambda
\quad\text{for $\xi, \eta\in \mathbb{R}^d$}.
\end{equation}
With the help of \eqref{C3}, the gradient of the product due to friction term is calculated:
\begin{equation}\label{C4}
p_{\rm f}(\tilde{u}, \tilde{v}) :=\nabla\alpha_{\rm f}
([\![\tilde{u}]\!]_{\tau_t}) \cdot[\![\tilde{v}]\!]_{\tau_t},\quad
\nabla p_{\rm f} (\tilde{u}, \tilde{v}) =\nabla  ([\![\tilde{v}]\!]_{\tau_t})^\top
\nabla\alpha_{\rm f} ([\![\tilde{u}]\!]_{\tau_t})
+\nabla ( [\![\tilde{u}]\!]_{\tau_t})^\top \nabla^2 \alpha_{\rm f}
([\![\tilde{u}]\!]_{\tau_t})  \, [\![\tilde{v}]\!]_{\tau_t},
\end{equation}
where $\nabla ( [\![\tilde{u}]\!]_{\tau_t})^\top
= [\![\nabla\tilde{u}]\!]^\top -(\nu_t\cdot  [\![\tilde{u}]\!]) \nabla\nu_t^\top
-\nabla (\nu_t\cdot [\![\tilde{u}]\!]) \nu_t^\top$  at $\Sigma_{t}$ according to \eqref{2.1}.
Similarly, we compute the gradient for the cohesive term
\begin{multline}\label{C5}
p^\varepsilon_{\rm c}(\tilde{u}, \tilde{v}) :=[\alpha_{\rm c}^\prime
+\beta_\varepsilon] (\nu_t\cdot[\![\tilde{u}]\!])\, (\nu_t\cdot[\![\tilde{v}]\!]),
\quad \nabla p^\varepsilon_{\rm c} (\tilde{u}, \tilde{v})
=([\![\nabla \tilde{v}]\!]^\top \nu_{t} +\nabla \nu_t^\top [\![\tilde{v}]\!])\,
[\alpha_{\rm c}^\prime +\beta_\varepsilon] (\nu_t\cdot[\![\tilde{u}]\!])\\
+([\![\nabla \tilde{u}]\!]^\top \nu_{t} +\nabla\nu_t^\top [\![\tilde{u}]\!])\,
[\alpha_{\rm c}^{\prime\prime} +\beta^\prime_\varepsilon]
(\nu_t \cdot[\![\tilde{u}]\!])\, (\nu_t \cdot[\![\tilde{v}]\!]).
\end{multline}
By \eqref{C4} and \eqref{C5}, the integrand \eqref{C2} is expressed as
\begin{multline}\label{C6}
I_{\Sigma_{t}} =-{\rm div}_{\tau_t} \Lambda\, [p_{\rm f} +p^\varepsilon_{\rm c}]
(u^\varepsilon_t, v^\varepsilon_t) +\Lambda\cdot \bigl\{ \nu_t
[\![\sigma(u^\varepsilon_t)\cdot \epsilon(v^\varepsilon_t)]\!]
-\nabla [p_{\rm f} +p^\varepsilon_{\rm c}] (u^\varepsilon_t, v^\varepsilon_t)
-[\![\nabla v^\varepsilon_t]\!]^\top\nu_t \bigl( \nu_t\cdot
\nabla\alpha_{\rm f} ([\![u^\varepsilon_t]\!]_{\tau_t}) \bigr)\\
-[\![\nabla u^\varepsilon_t]\!]^\top\nu_t \Bigl( \nu_t\cdot \int_0^1 \nabla^2\alpha_{\rm f}
([\![r u^\varepsilon_t]\!]_{\tau_t}) [\![v^\varepsilon_t]\!]_{\tau_t} dr \Bigr)
-\nabla (\nu_t\cdot [\![u^\varepsilon_t]\!])^\top \int_0^1 \Bigl(
[\alpha^{\prime\prime}_{\rm c} +\beta^\prime_\varepsilon]
(\nu_t\cdot [\![r u^\varepsilon_t]\!])\\ -[\alpha^{\prime\prime}_{\rm c}
+\beta^\prime_\varepsilon] (\nu_t\cdot [\![u^\varepsilon_t]\!]) \Bigr)
 (\nu_t\cdot [\![v^\varepsilon_t]\!]) \,dr \bigr\}
-\nabla ([\![u^\varepsilon_t]\!]_{\tau_t})^\top \int_0^1 \bigl(
\nabla^2 \alpha_{\rm f} ([\![r u^\varepsilon_t]\!]_{\tau_t})
-\nabla^2 \alpha_{\rm f} ([\![u^\varepsilon_t]\!]_{\tau_t}) \bigr)
[\![v^\varepsilon_t]\!]_{\tau_t} \,dr.
\end{multline}
Introducing for short the notation of $q_{\rm f}, q^\varepsilon_{\rm c}$
in \eqref{4.26} which is based on \eqref{C6},
we rearrange the terms in the shape derivative in the form
\begin{multline}\label{C7}
{\textstyle\frac{\partial}{\partial s}} \tilde{\mathcal{L}}^\varepsilon
(0, u^\varepsilon_t, u^\varepsilon_t, v^\varepsilon_t; \Omega_{t})
=\frac{1}{2} \int_{\Gamma^{\rm O}_{t}} \bigl( {\rm div}_{\tau_t} \Lambda\,
|u^\varepsilon_t -z|^2 +\Lambda\cdot \nabla (|u^\varepsilon_t -z|^2) \bigr) dS_x
+\rho \int_{\Sigma_{t}} {\rm div}_{\tau_t} \Lambda \,dS_x\\
+\int_{\Sigma_t} \bigl\{ -{\rm div}_{\tau_t} \Lambda\,
[p_{\rm f} +p^\varepsilon_{\rm c}] (u^\varepsilon_t, v^\varepsilon_t)
+\Lambda\cdot \bigl( \nu_t [\![\sigma(u^\varepsilon_t)\cdot \epsilon(v^\varepsilon_t)]\!]
-[\nabla(p_{\rm f} +p^\varepsilon_{\rm c}) +q_{\rm f} +q^\varepsilon_{\rm c}]
(u^\varepsilon_t, v^\varepsilon_t) \bigr) \bigr\} \,dS_x\\
+\int_{\Gamma^{\rm N}_{t}} \bigl( {\rm div}_{\tau_t} \Lambda (g\cdot v^\varepsilon_t)
+\Lambda\cdot \nabla ( g\cdot v^\varepsilon_t) \bigr) \,dS_x
+\int_{\Gamma^{\rm D}_{t}} \Lambda\cdot \mathcal{D}_1
(u^\varepsilon_t, v^\varepsilon_t) \,dS_x.
\end{multline}
Since the tangential velocity, its tangential divergence, and the curvature are equal to
\begin{equation}\label{C8}
\Lambda_{\tau_t} =\Lambda -(n^\pm_{t}\cdot \Lambda) n^\pm_{t},\quad
{\rm div}_{\tau_t} \Lambda_{\tau_t} ={\rm div}_{\tau_t} \Lambda
-(n^\pm_{t}\cdot \Lambda) \varkappa^\pm_{t},\quad
\varkappa^\pm_{t} ={\rm div}_{\tau_t} n^\pm_{t}\text{ at }\partial\Omega^\pm_t,
\end{equation}
for smooth $p$ the integration along a boundary $\Gamma_t\subset \partial\Omega^\pm_t$
is given by the formula (see e.g. \cite[(2.125)]{SZ/92}):
\begin{equation}\label{C9}
\int_{\Gamma_{t}} ( {\rm div}_{\tau_t} \Lambda\, p +\Lambda\cdot \nabla p) \,dS_x
=\int_{\Gamma_{t}} (n_t\cdot \Lambda) ( \varkappa_{t} p +n_t\cdot \nabla p) \,dS_x
+\begin{cases}
(\tau_t\cdot\Lambda) p|_{\partial \Gamma_{t}}& \text{in 2D,}\\
{\displaystyle\int_{\partial \Gamma_{t}}} (b_t\cdot\Lambda) p \, dL_x & \text{in 3D.}
\end{cases}
\end{equation}
In \eqref{C9} $\tau_t$ is a tangential vector at $\partial \Gamma_{t}$
positively oriented to $n_t$ in 2D, 
and $b_t =\tau_t\times n_t$ is a binomial vector
within the moving frame at $\partial \Gamma_{t}$ in 3D.
Applying \eqref{C9} to \eqref{C7}, decomposing the vectors
in \eqref{4.21} into the normal and tangential components, and recalling that
$v^\varepsilon_t =0$ at $\partial\Gamma^{\rm N}_{t}\cap\Gamma^{\rm D}_{t}$,
we conclude with the assertion of Corollary~\ref{corol1}.

\end{document}